\newcommand{\eps}{\varepsilon} %Für epsilon benutzen
\numberwithin{equation}{section}
\theoremstyle{plain}
\newtheorem{thm}{Theorem}[section]
\newtheorem{lemma}[thm]{Lemma}
\newtheorem{prop}[thm]{Proposition}
\newtheorem{conj}[thm]{Conjecture}
\newtheorem{remark}[thm]{Remark}
\newtheorem{thmalpha}{Theorem}
\theoremstyle{definition}
\newtheorem{problem}[thm]{Problem}
\theoremstyle{remark}
\newcommand{\N}{\mathbb{N}}
\newcommand{\R}{\mathbb{R}}
\newcommand{\E}{\mathbb{E}}
\newcommand{\ve}{\varepsilon}
\newcommand{\Pro}{\mathbb P}
\newcommand{\Var}{\mathrm{Var}}
\newcommand{\rate}{\mathbb I}
\newcommand{\discset}{{\mathcal D}}
\newcommand{\discsetinf}{{\mathcal D}_\infty}
\newcommand{\mapT}{{\mathcal T}}
\newcommand{\yspace}{{\mathcal Y}}
\newcommand{\yfilt}{{\mathcal A}}
\newcommand{\py}{\mathbb{P}_{\small{ {\mathcal Y}}}}
\newcommand{\term}{H}
\newcommand{\integ}{M}
\begin{document}
\title{Large Deviation Principles for Lacunary Sums}
\author[C. Aistleitner, N. Gantert, Z. Kabluchko, J. Prochno, K. Ramanan]{Christoph Aistleitner, Nina Gantert, Zakhar Kabluchko,\\ Joscha Prochno, Kavita Ramanan}

\newcommand{\mods}[1]{\,(\mathrm{mod}\,{#1})}

\begin{abstract}
  Let $(a_k)_{k\in \mathbb N}$ be an increasing sequence of positive integers satisfying the Hadamard gap condition $a_{k+1}/a_k> q >1$ for all $k\in\mathbb N$, and let 
$$
S_n(\omega) = \sum_{k=1}^n \cos(2\pi a_k \omega), \qquad n\in\mathbb N, \; \omega\in [0,1].
$$
Then $S_n$ is called a lacunary trigonometric sum, and can be viewed as a random variable defined on the probability space $\Omega= [0,1]$ endowed with Lebesgue measure. Lacunary sums are known to 
exhibit several properties that are typical for sums of independent random variables.
For example, a central limit theorem for $(S_n)_{n\in \N}$
has been obtained by Salem and Zygmund, while a law of the iterated logarithm is due to Erd\H os and G\'al.
In this paper we initiate the investigation of large deviation principles for lacunary sums.
Specifically, under the large gap condition $a_{k+1}/a_k \to\infty$, we prove that
the sequence $(S_n/n)_{n \in \N}$ does indeed satisfy a large deviation principle with speed $n$ and the same rate function $\widetilde{I}$ as for sums of independent random variables with the arcsine distribution. On the other hand, we show that the large deviation principle may fail to hold when we only assume the Hadamard gap condition.
However, we show that in the special case when $a_k= q^k$ for some $q\in \{2,3,\ldots\}$,  $(S_n/n)_{n \in \N}$ satisfies a large deviation principle (with speed $n$) and a rate function $I_q$ that is different from $\widetilde{I}$, and
describe an algorithm  to compute an arbitrary number of terms in the Taylor expansion of $I_q$.
 In addition,  we also prove that $I_q$ converges pointwise to $\widetilde I$ as $q\to\infty$.
 Furthermore, we construct a random perturbation $(a_k)_{k \in \N}$ of the sequence $(2^k)_{k \in \N}$ for which $a_{k+1}/a_k \to 2$ as $k\to\infty$, but for which at the same time $(S_n/n)_{n \in \N}$  satisfies a large deviation principle
 with the same rate function $\widetilde {I}$ as in the independent case, which is surprisingly different from the rate function $I_2$ one might na{\"i}vely expect. We relate this fact to the number of solutions of certain Diophantine equations. 
 Together, these results  show that large deviation principles for lacunary trigonometric sums are very sensitive to
 the arithmetic properties of the sequence $(a_k)_{k\in \mathbb N}$. This is particularly noteworthy since no such arithmetic effects are visible in the central limit theorem or in the law of the iterated logarithm for lacunary trigonometric sums. The proofs use a combination of tools from  probability theory,
   harmonic analysis, and  dynamical systems.
 \end{abstract}

\maketitle

\noindent
    {\em MSC2010 subject classifications.} Primary 42A55, 60F10, 11L03; 
Secondary 37A05, 11D45,  11K70.   \\
\noindent
{\em Key Words and phrases. } Lacunary series, lacunary trigonometric sums, large deviations, Hadamard gap condition, large gap
condition,  hyperbolic dynamics, Diophantine equations, normal numbers.

\allowdisplaybreaks

% \parindent0mm

% % % % % % % % % % % % % % % % % % % % % % % % %
\section{Introduction}
% % % % % % % % % % % % % % % % % % % % % % % % %

The study of lacunary series is a classical and still flourishing topic in harmonic analysis that has attracted considerable attention. In the article \cite{Rademacher1922} published in 1922, Rademacher studied the convergence  behavior of series of the form 
\[
\sum_{k=1}^\infty b_kr_k(\omega), 
\]
where $\omega \in[0,1]$, $b=(b_k)_{k\in\N}\in \R^\N$, and $r_k$ denotes the $k^{\text{th}}$ Rademacher function, that is, $r_k(\omega)=\text{sign}\big( \sin(2^k\pi \omega)\big)$. He proved that such a series converges for almost every $\omega \in[0,1]$
if $\sum_{k \in \N} |b_k|^2 < \infty$,
or equivalently,  $b\in\ell_2$. The necessity of square summability was obtained shortly after by Khintchine and Kolmogorov in  1925 \cite{KK1925}, thereby establishing an interesting $\ell_2$-dichotomy in the convergence behavior of such series.
Note that by the structure of the Rademacher functions one has
$$
\sum_{k=1}^\infty b_kr_k(\omega) = \sum_{k=1}^\infty b_k r_1 (2^{k-1} \omega),
$$ 
where on the right-hand side we have a series of dilates of a fixed function,
with an exponentially growing dilation factor. This leads naturally to the study of similar questions for lacunary trigonometric series, that is, series of the form
\[
\sum_{k=1}^\infty b_k\cos(2\pi a_k \omega) \qquad \text{and} \qquad \sum_{k=1}^\infty b_k\sin(2\pi a_k \omega),
\]
where $\omega \in[0,1]$, $b=(b_k)_{k\in\N}\in \R^\N$, and $(a_k)_{k\in\N}$ is   a  sequence  of positive integers that is  lacunary, in the sense that it satisfies the Hadamard gap condition
\begin{equation}
\label{Hadamard-gap}
\frac{a_{k+1}}{a_k} \geq q>1, \qquad \mbox{ for every } k \in \N. 
\end{equation}
Interestingly, results similar to the Rademacher case were obtained for such
series. Kolmogorov showed in \cite{K1924} that the square summability of $b$ is sufficient for the almost everywhere convergence of lacunary series and Zygmund proved in \cite{Z1930} that the square summability condition
was necessary, again establishing the same $\ell_2$-dichotomy as for Rademacher series.

An important property of the Rademacher functions is that they form a system of independent random variables. More precisely,  if $(b_k)_{k\in\N}$ is a sequence of real numbers, then the weighted Rademacher functions $b_kr_k$, $k\in\N,$ form a sequence of independent and centered random variables with $\Var(b_kr_k) = b_k^2$. One readily checks that Lindeberg's condition is satisfied whenever both $b\notin\ell_2$ and $\max_{1\leq k\leq n}|b_k|=o(\|(b_k)_{k=1}^n\|_2)$. This means that under these two conditions we have, for every $t\in\R$, the central limit theorem (CLT)
\[
\lim_{n\to\infty} \lambda\bigg(\Big\{ \omega \in[0,1]\,:\, \sum_{k=1}^n b_kr_k(\omega ) \leq t \|(b_k)_{k=1}^n\|_2 \Big\} \bigg) = \frac{1}{\sqrt{2\pi}}\int_{-\infty}^t e^{-y^2/2}\,dy,
\]
which in particular holds when $b_k=1$ for every $k\in\N$.
In 1939  Kac proved an analogous central limit theorem in the lacunary case for integer 
sequences $(a_k)_{k\in\N}$ with very large gaps, that is, those for which $a_{k+1}/a_k\to\infty$, as $k\to\infty$. The general case, however, did not appear till 1947 when Salem and Zygmund established in \cite{SZ1947} that, for all $t\in\R$, 
\begin{align}\label{thm:clt salem zygmund}
\lim_{n\to\infty} \lambda\bigg(\Big\{ \omega \in[0,1]\,:\, \sum_{k=1}^n \cos(2\pi a_k \omega) \leq t \sqrt{n/2} \Big\} \bigg) = \frac{1}{\sqrt{2\pi}}\int_{-\infty}^t e^{-y^2/2}\,dy.
\end{align}
These results suggest that lacunary trigonometric sums   behave in many ways
like sums of independent random variables, and in fact, this has become a classical heuristic
that has been confirmed in many settings.  
Indeed, under the Hadamard gap condition, the sequence of scaled partial sums of the functions $\cos(2\pi a_kx)$, $k\in\N,$ not only satisfies the central limit theorem in \eqref{thm:clt salem zygmund}, but,  as Salem  and Zygmund  \cite{SZ1950}
and Erd\H os and G\'al \cite{EG1955} showed,  also satisfies a law of the iterated logarithm (LIL), that is, for almost every $\omega \in[0,1]$, 
\[
\limsup_{n\to\infty}\, \frac{\sum\limits_{k=1}^n \cos(2\pi a_k \omega)}{\sqrt{n\log\log n}} = 1.
\]
A generalization  to non-integral sequences  $(a_k)_{k \in \N}$
 was also later established in \cite{MW1959}.

A natural question is to ask whether the CLT and LIL still hold under the Hadamard gap condition
when the function $\omega \mapsto \cos(2 \pi \omega)$ is replaced by an
arbitrary $1$-periodic function $f$. 
A famous example of  Erd\H os and Fortet (see, e.g., \cite{K1949}) shows that this is not true in general.  
However, under the additional condition that the function $f:\R\to\R$ is of bounded variation on $[0,1]$ and satisfies both 
\begin{equation}
\label{Kac-conditions}
f(\omega +1) = f(\omega) \qquad\text{and}\qquad \int_0^1 f(\omega) \,d\omega = 0,
\end{equation}
Kac was able to show in \cite{K1946} that a central limit theorem holds for scaled partial sums of the functions $\omega \mapsto f(2^k\omega)$, $k\in\N,$ but in this  case the variance of the Gaussian limit law is 
\begin{equation}\label{eq:sigma^2 non-independet}
\sigma^2 = \int_0^1 f(\omega)^2\,d\omega + 2\sum_{k=1}^\infty\int_0^1 f(\omega)f(2^k\omega)\,d\omega,  
\end{equation}
rather than   $\int_0^1 f(\omega)^2\,d\omega$, as one would have 
  in the independent case, namely for the sequence of partial sums $\sum_{k=1}^n f(2^kU_k)$, where
  $\{U_k\}_{k \in \N}$ are independent and identically distributed (i.i.d.) random
  variables distributed uniformly on $(0,1)$. 
  This shows that general lacunary function systems possess a more complicated dependence
structure than lacunary trigonometric function systems, and that in the general case, the arithmetic
structure of the lacunary integer sequence plays a crucial role. 
 Gapo\v{s}kin found a remarkable relation between the existence of a CLT and the number of solutions to a certain Diophantine equation \cite{G1970}. It was only more recently,  in 2010, that Aistleitner and Berkes improved Gapo\v{s}kin's result and provided the precise condition for the central limit theorem to hold in the general lacunary framework \cite[Theorem 1.1]{AB2010}.

 While we have seen that the probabilistic behavior of lacunary series is quite well understood  on the scales of
 both the CLT and LIL, this is not the case for large deviations. Specifically,  large deviation principles (LDPs)
 seem to have not been studied at all in the lacunary setting. 
 In contrast to the CLT, which captures universal behavior in the sense that the limits are insensitive to details of the distribution beyond the first and second moments,  probabilities of (large) deviations on the scale of laws of large numbers are non-universal and describe the asymptotic likelihood of rare events. More precisely, LDPs are sensitive to the distribution of the underlying random variables and their non-universality is reflected in the so-called rate function and/or the speed, which together define the asymptotic exponential decay rate of
  large deviation probabilities. The most classical result in this direction is Cram\'er's theorem \cite{C1938} (see also \cite{CT2018} and \cite[Theorem 2.2.3]{dembo_zeit}), which guarantees that if $X,X_1,X_2,\ldots$ are i.i.d.~random variables with cumulant (or log-moment) generating function $\Lambda(u):=\log\E[e^{uX}]<\infty$ for $u$ in a neighborhood of zero,  then one has 
$$
\lim\limits_{n\to\infty}{1\over n}\log \Pro\left(X_1+\ldots+X_n\geq nt\right)=-\Lambda^*(t), 
$$
for all $t>\E [X]$, where $\Lambda^*$ is the Legendre-Fenchel transform of 
$\Lambda$ given by
$$
\Lambda^*(t) = \sup_{\theta \in \R} \left[\theta x - \Lambda(\theta)\right]\, .
$$
LDPs in the spirit of Donsker and Varadhan, who initiated a systematic study (see \cite{dembo_zeit, V2008} and the references cited therein), generalize the idea behind Cram\'er's theorem. Loosely speaking, a sequence $(X_n)_{n \in \N}$ of random variables in $\R^d$ is said to satisfy an LDP with speed $s_n\uparrow \infty$ and a  rate function $\rate:\R^d\to[0,\infty]$ if for sufficiently large $n\in\N$ and $A\subset \R^d$ sufficiently regular,
\[
\Pro(X_n\in A) \approx e^{-s_n \, \inf\limits_{x\in A}\rate(x)}.
\]
More precisely, a sequence $(X_n)_{n \in \N}$ of random variables in $\R^d$ is said to satisfy an LDP with speed $s_n$ and rate function $\rate:\R^d\to[0,\infty]$ if
$\rate:\R^d \to [0,\infty]$  is   lower-semicontinuous and 
 for every Borel measurable set $A \subset \R^d$,
 \begin{equation}
   \label{def-ldp}
   - \inf_{x \in A^\circ} \rate(x) \leq \liminf_{n\to\infty} \frac{1}{s_n} \log \Pro( X_n \in A) \leq \limsup_{n\to\infty} \frac{1}{s_n} \log \Pro( X_n \in A)
\leq - \inf_{x \in \bar{A}} \rate (x),
\end{equation}
where $A^\circ$ and $\bar{A}$, respectively, denote the interior and closure of the set $A$.

In this paper we initiate the study of large deviations for lacunary sums,
thereby complementing 
 existing  limit theorems like the CLT and LIL 
 mentioned above. More precisely, if $(a_k)_{k\in\N}$ is a lacunary sequence, that is, a sequence of real numbers satisfying the Hadamard gap condition \eqref{Hadamard-gap}, 
we study the  tail behavior 
  of the associated sequence of lacunary sums, namely partial sums of the  sequence
$X_k(\omega):=\cos(2\pi a_k \omega)$,
$\omega \in [0,1]$, $k\in\N,$ viewed as  real-valued random variables on the  space 
  $[0,1]$ equipped with the Borel $\sigma$-algebra $\mathcal B([0,1])$ and Lebesgue 
  measure $\lambda$.  
 Our results reveal an interesting and surprising behavior, showing how sometimes -- depending on arithmetic properties of the lacunary sequence $(a_k)_{k \in \N}$ -- the large deviations behavior of the associated sequence of lacunary trigonometric  sums resembles that of partial  sums of independent and identically distributed random variables, whereas in other situations it does not.  This is particularly interesting since no such influence of the arithmetic structure of the lacunary sequence  is visible under the Hadamard gap condition when considering lacunary trigonometric sums, neither in the case of the CLT nor in the case of the LIL.

 We present precise statements of our main findings in the next section, with the proofs presented
   in the following section.

% % % % % % % % % % % % % % % %
\section{Main results}
% % % % % % % % % % % % % % % %

We  now present the main results of this paper. 
Let $U \sim {\rm Unif}(0,1)$ be a random variable with the uniform distribution on the interval $[0,1]$.  Given a sequence $(a_k)_{k \in \N}$ of positive integers, define the random variables
    \begin{equation}
    \label{def-xk}
    X_k := \cos (2 \pi a_k U), \quad k \in \N,
    \end{equation}
    and their partial sums
    \begin{equation}
    \label{def-sn}
    S_n := \sum_{k=1}^n X_k = \sum_{k=1}^n  \cos (2 \pi a_k U),   \quad n \in \N.
    \end{equation} 
    These random variables are most conveniently defined on the probability space $\Omega=[0,1]$ endowed with the Borel $\sigma$-algebra $\mathcal B([0,1])$ and  standard Lebesgue measure $\lambda$,
    which we shall sometimes also denote by $\Pro$.    
    As a function on $\Omega$,  $X_k$ is then given by $X_k (\omega) = \cos (2\pi a_k \omega)$, for $\omega\in [0,1]$  and $k\in\N$.
    Note that the random variables $X_1,X_2,\ldots$ are identically distributed and (if all  $a_k, k \in \N,$  are distinct) uncorrelated. To see that the correlations vanish when  $(a_k)_{k \in \N}$ are distinct, recall that $\cos(\alpha)\cdot\cos(\beta) = 2^{-1}[\cos(\alpha-\beta) + \cos(\alpha + \beta)]$ and hence, whenever $k \neq \ell$, we have  
\[
\int_{0}^{1} \cos(2\pi a_k \omega) \cos(2\pi a_\ell \omega)\,d \omega = \frac{1}{2}\int_0^1 \cos(2\pi(a_k-a_\ell)\omega) \,d\omega + \frac{1}{2} \int_0^1 \cos(2\pi(a_k+a_\ell)\omega) \,d\omega = 0.
\]
However, the elements of the sequence $(X_k)_{k\in\N}$ are not independent and  in fact, the sequence is in general not even stationary.

\subsection{Behavior as in the independent case}
    Our aim is to prove LDPs for the sequence $(S_n/n)_{n\in\N}$.
    It is natural to try to compare the behavior of $S_n/n$ to the behavior of partial sums of \textit{independent} random variables with the same distribution as $X_1$, the common distribution of $X_k, k \in \N$.  To this end, consider the random variables
    \begin{equation}
      \label{def-wtildexk}
      \widetilde{X}_k := \cos (2 \pi a_k  U_k),  \quad k \in \N,
      \end{equation}
    where $(U_k)_{k \in \N}$ are i.i.d.~random variables with the same distribution as $U$, and define their partial sums
    \begin{equation}
      \label{def-wtildesn} 
    \widetilde{S}_n := \sum_{k=1}^n \widetilde{X}_k,  \quad n \in \N.
    \end{equation}
By Cram\'er's classical theorem (see, e.g., \cite[Theorem 2.2.3]{dembo_zeit}), $(\widetilde S_n/n)_{n\in\N}$ satisfies an LDP with speed $n$ and rate function $\widetilde{I}:\R\to[0,+\infty]$ given by the Legendre-Fenchel transform of the logarithmic moment generating function, that is,
\begin{equation}
\label{def-widetildeI}
\widetilde I(x) = \sup_{\theta \in \R} \left[\theta x - \widetilde \Lambda(\theta)\right],
\end{equation}
where
\begin{equation}
\label{def-tildeLambda}
\widetilde \Lambda (\theta) := \log\E[e^{\theta \widetilde{X}_1}],
\qquad
\theta\in\R.
\end{equation} 
The function  $\widetilde{\Lambda}$ can be computed explicitly. 
The common distribution of the random variables $\widetilde{X}_k, k \in \N,$ is the arcsine law on $(-1,1)$ with Lebesgue density
\[
f(x) = \frac{1}{\pi \sqrt{1-x^2}}\,, \qquad |x|<1. 
\]
The moment generating function of $\widetilde X_1$ is accordingly given by
\begin{eqnarray}\label{arcsin_mgf}
\E[ e^{\theta \widetilde{X}_1}]
=
\int_{-1}^1 e^{\theta x} \frac{1}{\pi \sqrt{1 - x^2} }dx & = & \sum_{m=0}^\infty \frac{\theta^{2m}}{(2m)!} \int_{-1}^1 \frac{x^{2m}}{\pi \sqrt{1-x^2}} dx \nonumber\\
& = & \sum_{m=0}^\infty \frac{\theta^{2m}}{(2m)!} \frac{\Gamma(m+1/2)}{\Gamma(m+1)\sqrt{\pi}} \nonumber\\
& = & \sum_{m=0}^\infty \frac{\theta^{2m}}{(2m)!} \frac{(2m)!}{2^{2m} m!m!} \nonumber\\
& = & \sum_{m=0}^\infty \frac{\theta^{2m}}{2^{2m} (m!)^2}.
% = :B_0(\theta).
%=
%I_0(\theta)
\end{eqnarray}
Note that the right-hand side is equal to the modified Bessel function $B_0(\theta)$
of the first kind. %but we shall not need this in the sequel.
When combined, the above calculations yield 
\begin{equation}\label{arcsin_cumgf}
\widetilde \Lambda (\theta)
=
\log \sum_{m=0}^\infty \frac{\theta^{2m}}{2^{2m} (m!)^2}
,
\qquad
\theta\in\R.
\end{equation}
Since $\widetilde X_1$ is supported on the interval $[-1,1]$, the function $\widetilde I$ equals $+\infty$ outside $[-1,1]$. Moreover, the asymptotics of the modified Bessel function $B_0$ given in~\cite[p.~377, 9.7.1]{abramowitz_stegun} imply that
$$
\widetilde \Lambda (\theta) = \theta - \frac{1}{2} \log (2\pi \theta) + O\left(\frac{1}{\theta}\right), \qquad \mbox{ as } \theta\to +\infty,
$$
which, after taking the Legendre-Fenchel transform, yields that $\widetilde I(\pm 1) = +\infty$. On the interval $(-1,1)$, the function $\widetilde I$ is finite.

\vspace*{2mm}
Now, let us finally turn to the partial sums $(S_n)_{n \in \N}$ defined in \eqref{def-sn}. Our first result states that when $(a_k)_{k  \in \N}$ satisfies the so-called ``large gap condition'', the associated sequence of lacunary sums $(S_n/n)_{n\in \N}$ satisfies an LDP  with the same speed and the same rate function $\widetilde I$ as in the truly independent case, that is, as
$(\widetilde{S}_n/n)_{n \in \N}$.

\begin{thmalpha} \label{th1}
Suppose that $(a_k)_{k\in\N}$ is a sequence of positive integers that satisfies the ``large gap condition'' 
\[
\frac{a_{k+1}}{a_k} \to \infty \quad \text{as} \quad k \to \infty.
\]
Then $(S_n/n)_{n \in\N}$ satisfies an LDP with speed $n$, and rate function  $\widetilde{I}$.  
\end{thmalpha}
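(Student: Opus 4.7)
The natural strategy is to apply the G\"artner--Ellis theorem. The candidate limit $\widetilde{\Lambda}$ is finite and $C^\infty$ on all of $\R$ and therefore essentially smooth, so it suffices to verify the pointwise convergence
\[
\Lambda_n(\theta) := \frac{1}{n}\log\E[e^{\theta S_n}] \xrightarrow[n\to\infty]{} \widetilde{\Lambda}(\theta), \qquad \theta\in\R,
\]
in order to conclude that $(S_n/n)_{n\in\N}$ satisfies an LDP with rate function $\widetilde{I} = \widetilde{\Lambda}^*$.

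To compute this exponential moment I would apply the Jacobi--Anger expansion $e^{\theta\cos(2\pi a_k\omega)} = \sum_{m\in\mathbb{Z}} I_m(\theta) e^{2\pi i m a_k \omega}$ (where $I_m$ denotes the modified Bessel function of the first kind of order $m$, so $I_0 = B_0$), multiply over $k=1,\ldots,n$, and integrate in $\omega$. Only Fourier modes at frequency zero survive, yielding the exact identity
\[
\E[e^{\theta S_n}] = \sum_{\substack{\mathbf{m}\in\mathbb{Z}^n \\ \sum_{k} m_k a_k = 0}} \prod_{k=1}^n I_{m_k}(\theta).
\]
The ``diagonal'' term $\mathbf{m}=0$ equals $I_0(\theta)^n = e^{n\widetilde{\Lambda}(\theta)}$, so the task reduces to bounding the off-diagonal contribution by $e^{o(n)}$ times this diagonal. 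This is where the large gap hypothesis enters. For any $L>0$, pick $K_L$ so that $a_{k+1}/a_k > L$ for all $k\geq K_L$; the standard lacunary estimate $|m_j| a_j \leq \|\mathbf{m}\|_1 a_{j-1}$ at the maximal index $j = \max\mathrm{supp}(\mathbf{m})$ then forces every nontrivial solution of $\sum_k m_k a_k = 0$ with $\|\mathbf{m}\|_1 \leq L$ to be supported in $\{1,\ldots,K_L\}$. Accordingly I would split the off-diagonal sum into its small-norm part ($\|\mathbf{m}\|_1 \leq L$), which equals a finite constant $C_L(\theta)$ times $I_0(\theta)^{n-K_L}$ and thus contributes $O_L(1)\cdot I_0(\theta)^n$, and its large-norm tail, which I would control by combining the super-exponential Bessel decay $|I_m(\theta)| \leq (|\theta|/2)^{|m|} e^{\theta^2/4}/|m|!$ with a generating-function Chernoff-type estimate, exploiting that any $\mathbf{m}$ contributing to this tail must carry some very large Bessel index. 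Letting $n\to\infty$ then $L\to\infty$ gives $\limsup_n \Lambda_n(\theta) \leq \widetilde{\Lambda}(\theta)$.

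The matching lower bound is immediate for $\theta \geq 0$: since $I_m(\theta) > 0$ for all $m$, every term in the Diophantine sum is non-negative and the identity already implies $\E[e^{\theta S_n}] \geq I_0(\theta)^n$. For $\theta < 0$ the summands alternate in sign via $I_m(\theta) = (-1)^m I_m(-\theta)$ and may interfere destructively; I expect this to be the main technical obstacle. My plan is to invoke the absolute convergence of the Diophantine series (already secured in the upper bound) to show that $\lim_n \E[e^{\theta S_n}]/I_0(\theta)^n$ exists as a finite positive constant $\mathcal{E}(\theta)$---finiteness from the large gap control, and positivity from the strict positivity of each $\E[e^{\theta S_n}]$ together with the uniform boundedness of the off-diagonal remainder---which yields $\Lambda_n(\theta) = \widetilde{\Lambda}(\theta) + O(1/n)$. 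A possible alternative is to construct a coupling of $(X_k)_{k\in\N}$ with the independent sequence $(\widetilde{X}_k)_{k\in\N}$ that is exponentially equivalent in the sense of \cite{dembo_zeit} and then transfer Cram\'er's LDP directly from $\widetilde{S}_n/n$ to $S_n/n$.
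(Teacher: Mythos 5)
Your skeleton (G\"artner--Ellis, Fourier/Bessel expansion of $e^{\theta\cos(2\pi a_k\omega)}$, reduction to a Diophantine sum, and the observation that the large gap condition confines bounded-norm solutions of $\sum_k m_k a_k=0$ to a fixed initial block) is essentially the paper's strategy. But there are two genuine gaps. First, the large-norm tail is not controllable by ``Bessel decay plus Chernoff'' as described. The premise that $\|\mathbf{m}\|_1>L$ forces some individual index $|m_k|$ to be large is false: a vector with $L+1$ entries equal to $\pm1$ already lies in the tail. And if you drop the Diophantine constraint to run a generating-function Chernoff bound, the unconstrained sum is $\bigl(\sum_{m\in\mathbb{Z}}|I_m(\theta)|\,z^{|m|}\bigr)^n\approx e^{c(z,\theta)n}$, which exceeds $I_0(\theta)^n$ by an exponential factor; the resulting bound $z^{-L}e^{c n}$ is useless unless $L\gtrsim n$. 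So the intermediate regime $L<\|\mathbf{m}\|_1<cn$ is not covered, and covering it requires using the Diophantine constraint for unboundedly large Fourier indices, which is precisely what is hard. Second, your lower bound for $\theta<0$ is not a proof: strict positivity of each $\E[e^{\theta S_n}]$ gives only the trivial bound $\E[e^{\theta S_n}]\ge e^{-|\theta|n}$ and does not prevent $\E[e^{\theta S_n}]/I_0(\theta)^n$ from decaying exponentially, and the off-diagonal sum is generically of the same order as the diagonal term (the small-norm part alone contributes $C_L(\theta)I_0(\theta)^{n-K_L}$, a constant multiple of $I_0(\theta)^n$), so you cannot conclude the limit is positive without controlling its sign.

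Both gaps are closed by the device the paper uses: truncate the expansion of $e^{\theta\cos\phi}$ at a \emph{fixed finite order} $d=d(\ve)$ with a \emph{multiplicative} error, i.e.\ $1-\ve\le p_d(\theta\cos\phi)/e^{\theta\cos\phi}\le 1+\ve$ (the paper uses the Taylor polynomial; a truncated Jacobi--Anger series would also work since $e^{\theta\cos\phi}\ge e^{-|\theta|}$), and only then choose $k_0$ with $a_{k+1}/a_k>2d$ for $k\ge k_0$. With all Fourier indices capped at $d$, the leading term $j_1a_{k_1}$ strictly dominates $d(a_{k_2}+a_{k_3}+\cdots)$, so \emph{every} cross frequency from indices beyond $k_0$ is nonzero and the integral over the tail block equals $b_0(\theta)^{n-k_0}$ \emph{exactly} --- there is no off-diagonal remainder to estimate and no sign issue for $\theta<0$. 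The two-sided multiplicative error then gives matching upper and lower bounds $e^{o(n)}b_0(\theta)^{n}$ for all $\theta\in\R$, and letting $\ve\to0$ sends $b_0(\theta;d(\ve))\to B_0(\theta)$. Your alternative suggestion of an exponentially equivalent coupling with the independent sequence is close to the paper's warm-up proof via Cantor expansions and conditional expectations, but that argument needs the ratios $a_{k+1}/a_k$ to be integers and does not extend to the general case.
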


The proof of Theorem \ref{th1}  is given in Section \ref{pf-ind-general}, with a special case treated in Section \ref{pf-ind-special}.

\begin{remark}
{\em In this paper, we discuss only sequences that satisfy Hadamard's gap condition.
If $(a_k)_{k \in \N} \subset \mathbb{N}$ is a sub-lacunary sequence, that is, increasing and 
$\log(a_k)/k \to 0$ as $k \to \infty$, then for $z \in (0,1)$, we argue below that
\begin{equation}\label{sublacunary}
  \liminf_{ n \to \infty} \frac{1}{n} \log \Pro\left(\left\{\omega\in[0,1]\,:\, \frac{1}{n} \sum_{k=1}^n \cos(2\pi a_k \omega) \geq z\right\}\right) = 0,
\end{equation}
which says that in contrast to the lacunary case, these probabilities decay slower than exponentially in $n$.
To show \eqref{sublacunary}, 
fix $z \in (0,1)$ and 
choose $\delta = \delta(z) > 0$ such that
$\cos(2\pi \omega) \geq z$ for $|\omega| \leq \delta$. 
Then, $\cos(2\pi a_k \omega) \geq z$ for all $k\in\{1,\dots,n\}$ 
if  $|\omega| \leq \delta/a_n$. 
But $\Pro\left(\omega \leq \frac{\delta}{a_n}\right) = \frac{\delta}{a_n}$ and
\begin{eqnarray*}
 && \liminf_{n \rightarrow \infty}  \frac{1}{n} \log \Pro \left(\left\{ \omega\in[0,1]\,:\, \frac{1}{n}\sum_{k=1}^n \cos(2\pi a_k \omega) \geq z \right\}\right) \cr
 & \geq & \liminf_{n \rightarrow \infty} \frac{1}{n} \log \Pro \big(\big\{\omega\in[0,1]\,:\, \cos (2 \pi a_k \omega) \geq z, \forall k \in \{1,2, \ldots, n\} \big\}\big) \\
 & \geq & \lim\limits_{ n \to \infty}\frac{1}{n} \log \Pro\left (\omega \leq \frac{\delta} {a_n}\right) \\
 & = & 0,
 \end{eqnarray*}
where the last equality uses the assumption that $(a_k)_{k \in \N}$ is sub-lacunary.  Since the opposite inequality follows trivially, this proves
 \eqref{sublacunary}.
} 
\end{remark}

% % % % % % % % % % % % % % % % % % % % % % % % % % % % % % %
\subsection{The case of geometric progressions $a_k=q^k$}
% % % % % % % % % % % % % % % % % % % % % % % % % % % % % % %
Let us now consider the case when there exists $q \in \{2,3,\ldots\}$ such that  $a_k = q^k$ for $k \in \N$. Contrary to the case of a large gap condition (see Theorem \ref{th1}), we now obtain LDPs with the same speed $n$, but with rate functions that are different from $\widetilde I$, and depend on the value of
$q$. Our main findings in this case are summarized in the following theorem, whose proof is given in Section \ref{subs-pf-theoremB}.  

\begin{thmalpha} \label{th2}
  Fix $q \in \{2,3,\ldots\}$. Let $a_k = q^k$ for $k \in \N$,  let 
  $S_n$ be the partial sum defined in \eqref{def-sn}.
  Then the following limit exists: 
\begin{equation}\label{eq:cum_gen_funct_converge}
\Lambda_q (\theta) := \lim_{n \rightarrow \infty}
\frac{1}{n} \log \E[e^{\theta S_n}],
\end{equation}
with the convergence  holding uniformly for $\theta$ in  compact subsets of an open set $\mathcal D$ in the complex plane such that $\R\subset \mathcal D$. 
 Moreover,  $(S_n/n)_{n \in\N}$ satisfies an LDP with speed $n$ and  rate function $I_q$,   
 which is the Legendre-Fenchel transform of $\Lambda_q$, that is,
$$
I_q(x) = \sup_{\theta\in\R} \left[\theta x - \Lambda_q(\theta) \right], \qquad x\in\R.  
$$
 Furthermore,  each $I_q$ satisfies $I_q(x) > 0$ for $x\neq 0$ and
$I_q$ is equal to $+\infty$ outside $[-1,1]$, and 
  the family of rate functions
    $I_q, q \in \{2, 3, \cdots\},$ has
    the following properties: 
\begin{itemize}
\item[(i)] For every $q \in \{2,3,\ldots\}$,  we have $I_q(1) \leq \widetilde{I}(1)$ and 
$I_q(x) < \widetilde{I}(x)$ for $x \in (0,1)$, 
where $\widetilde I$ is  defined in \eqref{def-widetildeI}.
In particular, the functions $I_q$ and $\widetilde I$ are different. 
\item [(ii)] The limit  $\lim_{q \to \infty} I_q(x) = \widetilde{I}(x)$ holds  uniformly on compact subsets of the interval $(-1,1)$.
\item [(iii)] There is a smooth transition of $I_q$ towards $\widetilde{I}$ as $q\to\infty$, in the following sense: 
 for all $m\in \{1,\ldots,q\}$, we have
\[
\Big(\frac{d}{d\theta}\Big)^m \Lambda_q(\theta)\Big|_{\theta=0} = \Big(\frac{d}{d\theta}\Big)^m \widetilde{\Lambda}(\theta)\Big|_{\theta=0} = \kappa_m(\widetilde{X}_1),
\]
where $\widetilde \Lambda$ is defined in \eqref{def-tildeLambda}, $\widetilde{X}_1$ in \eqref{def-wtildexk}, $\kappa_m(\widetilde{X}_1)$ is the $m^{\text{th}}$ cumulant of $\widetilde{X}_1$. Further,
$$
\Big(\frac{d}{d x}\Big)^m I_q(x)\Big|_{x=0} = \Big(\frac{d}{d x}\Big)^m \widetilde{I}(x)\Big|_{x=0}.
$$
In other words, the coefficients of $1,\theta,\ldots,\theta^q$ in the Taylor expansions of $I_q(\theta)$ and $\widetilde{I}(\theta)$  coincide at the origin. 

\item[(iv)] Whereas in (iii) the first $q$ derivatives of $\Lambda_q$ and $\widetilde{\Lambda}$ coincide, this is no longer true for the derivative of order $q+1$. In particular, 
$$
\Big(\frac{d}{d\theta}\Big)^{q+1} \Lambda_q(\theta)\Big|_{\theta=0}
=
\Big(\frac{d}{d\theta}\Big)^{q+1} \widetilde{\Lambda}(\theta)\Big|_{\theta=0}+ \frac{q+1}{2^{q}}
>
\Big(\frac{d}{d\theta}\Big)^{q+1} \widetilde{\Lambda}(\theta)\Big|_{\theta=0}.
$$
\end{itemize}
\end{thmalpha}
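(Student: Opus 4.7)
The strategy rests on the dynamical identity $X_k = f(T^k U)$ with $T(x) = qx \bmod 1$ the uniformly expanding Bernoulli map on $[0,1]$ (which preserves Lebesgue measure) and $f(x) = \cos(2\pi x)$, so that $S_n$ is a Birkhoff sum for $T$. The natural tool is the Ruelle--Perron--Frobenius (transfer) operator in its twisted form
\[
(\mathcal{L}_\theta h)(y) := \frac{1}{q}\sum_{j=0}^{q-1} e^{\theta f((y+j)/q)}\,h\!\left(\tfrac{y+j}{q}\right).
\]
A short induction using $T$-invariance of Lebesgue measure together with the adjoint identity $\int \phi\cdot (g\circ T)\,dx = \int g\cdot (\mathcal{L}\phi)\,dy$ yields $\E[e^{\theta S_n}] = \int_0^1 (\mathcal{L}_\theta^n \mathbf{1})(y)\,dy$. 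On a Banach space of functions analytic in a complex neighborhood of $[0,1]$ (a Hardy-type space), $\mathcal{L}_\theta$ is quasi-compact (in fact nuclear, since $T$ is uniformly expanding and $f$ is entire), and the Ruelle--Perron--Frobenius theorem provides, for real $\theta$, a simple isolated dominant positive eigenvalue $\lambda(\theta)$ with a spectral gap; Kato--Rellich analytic perturbation theory extends $\lambda(\cdot)$ analytically to a complex neighborhood $\mathcal{D} \supset \R$. The spectral decomposition then yields $\Lambda_q(\theta) = \log\lambda(\theta)$ with uniform convergence on compact subsets of $\mathcal{D}$. Since $\Lambda_q$ is real-analytic and strictly convex on $\R$, the G\"artner--Ellis theorem gives the LDP with rate function $I_q = \Lambda_q^{*}$; the bound $|S_n|\leq n$ forces $I_q \equiv +\infty$ outside $[-1,1]$, and strict convexity with $\Lambda_q'(0) = \E[X_1] = 0$ yields $I_q(x) > 0$ for $x \neq 0$.

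The derivative identities in (iii) and (iv) come from direct combinatorial cumulant analysis. Expanding $\cos(2\pi q^k U) = \tfrac12(e^{2\pi i q^k U} + e^{-2\pi i q^k U})$ gives
\[
\E[X_{k_1}\cdots X_{k_m}] = 2^{-m}\cdot \#\Bigl\{\epsilon \in \{\pm 1\}^m : \textstyle\sum_{j=1}^m \epsilon_j q^{k_j} = 0\Bigr\},
\]
whereas the i.i.d.\ analogue counts only those $\epsilon$ for which the cancellation splits into separate sub-sums within each group of coincident indices $k_j$. By uniqueness of base-$q$ digit expansions, any non-splittable cancellation must involve at least $q+1$ terms (the simplest being the carry $q\cdot q^k = q^{k+1}$). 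Consequently, the joint moments, and therefore the cumulants $\kappa_m(S_n)$, coincide with the i.i.d.\ case for $m \le q$; passing to the limit in $\Lambda_q^{(m)}(0) = \lim_n \kappa_m(S_n)/n$ gives the matching of derivatives of $\Lambda_q$ and $\widetilde{\Lambda}$ at $0$, and differentiating the Legendre stationarity condition at the common critical point $x = 0$ transfers the identity to $I_q^{(m)}(0) = \widetilde{I}^{(m)}(0)$. At order $q+1$, the only new signed cancellations are the $q+1$ orderings of ``$q$ copies of exponent $k$ with one sign plus one copy of exponent $k+1$ with opposite sign'' (two sign choices); summing over $k \in \{1,\ldots,n-1\}$ and normalising by $n$ produces the extra contribution $(q+1)/2^q$ in (iv).

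Properties (i) and (ii) follow from (iii)--(iv) combined with convex analysis. The same expansion gives the termwise inequality $\E[X_{k_1}\cdots X_{k_m}] \geq \E[\widetilde{X}_{k_1}\cdots \widetilde{X}_{k_m}] \geq 0$, hence $\E[e^{\theta S_n}] \ge \E[e^{\theta \widetilde{S}_n}]$ for all $\theta \ge 0$ and $\Lambda_q \ge \widetilde{\Lambda}$ on $[0,\infty)$; this, combined with (iv) and termwise non-negativity of the Taylor coefficients of $\Lambda_q - \widetilde{\Lambda}$, upgrades to strict inequality on $(0,\infty)$, and Legendre duality delivers $I_q \le \widetilde{I}$ on $\R$ with $I_q(x) < \widetilde{I}(x)$ for $x \in (0,1)$. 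The bound $I_q(1) < \widetilde{I}(1) = +\infty$ follows at once from the estimate $\Pro(U \in [0,q^{-n}]) = q^{-n}$, on which event all $\cos(2\pi q^k U)$ with $k \le n$ are uniformly close to $1$, giving $I_q(1) \le \log q$. For (ii), the matching of Taylor coefficients in (iii) to higher and higher order, together with uniform analytic bounds on $\Lambda_q$ provided by the spectral analysis (which yield a common disk of analyticity with a uniform bound), gives $\Lambda_q \to \widetilde{\Lambda}$ uniformly on compact subsets of $\R$, and continuity of the Legendre transform on smooth strictly convex families yields $I_q \to \widetilde{I}$ uniformly on compact subsets of $(-1,1)$. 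The principal technical obstacle is the spectral analysis of $\mathcal{L}_\theta$---establishing quasi-compactness on the analytic function space, simplicity of the leading eigenvalue, and analytic dependence on $\theta$ in a complex neighborhood of $\R$---while the remaining Diophantine and convex-analytic arguments are more routine once the spectral setup is in place.
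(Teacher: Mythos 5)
Your overall architecture matches the paper's: the LDP via the twisted transfer operator $\mathcal{L}_\theta$, quasi-compactness and Kato--Rellich perturbation to get $\Lambda_q=\log\lambda(\theta)$ analytic on a complex neighborhood of $\R$, G\"artner--Ellis, and the Diophantine moment counting ($\E[S_n^m]=2^{-m}A_m(n)$, with $A_m(n)$ equal to the i.i.d.\ bridge count for $m\le q$ and exceeding it by $2(q+1)(n-1)$ at $m=q+1$) for (iii) and (iv). Those parts are sound. The genuine gap is in your proof of (i). You claim that the strict inequality $\Lambda_q(\theta)>\widetilde\Lambda(\theta)$ on $(0,\infty)$ follows from ``termwise non-negativity of the Taylor coefficients of $\Lambda_q-\widetilde\Lambda$.'' This is false: the termwise inequality holds for \emph{moments}, not for \emph{cumulants}, and the Taylor coefficients of $\Lambda_q$ at $0$ are (normalized limits of) cumulants. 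Concretely, for $q=2$ one computes $\kappa_5(S_n)/5!=(-n-24)/384$, so the coefficient of $\theta^5$ in $\Lambda_2$ is $-1/384$ while that of $\widetilde\Lambda$ is $0$; the difference has a strictly negative coefficient. Even if one only uses the (correct) positivity of the $\theta^{q+1}$ coefficient from (iv), that yields $\Lambda_q>\widetilde\Lambda$ only for $\theta>0$ \emph{small}, hence $I_q(x)<\widetilde I(x)$ only for $x>0$ near $0$, since the Legendre maximizer $\theta_x$ tends to $+\infty$ as $x\uparrow 1$. The paper closes this with a separate, substantive argument: approximating $e^{\theta\cos(\cdot)}$ by a Taylor polynomial, isolating the coefficients $b_1(\theta)\ge\theta$ and $b_q(\theta)\ge\theta^q/(q!\,2^{q-1})$, and exploiting the interference between the frequency $q^{k+1}$ produced by the $q$-th harmonic at index $k$ and the first harmonic at index $k+1$; pairing consecutive factors gives the uniform-in-$\theta$ bound $\Lambda_q(\theta)-\widetilde\Lambda(\theta)\ge\tfrac12\log\bigl(1+\theta^{q+1}/(q!\,2^q e^{2\theta})\bigr)$ for all $\theta>0$, which is what delivers strict inequality on all of $(0,1)$. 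Your proposal contains no substitute for this step. (Relatedly, your assertion that ``Legendre duality delivers $I_q\le\widetilde I$ on $\R$'' overreaches: the moment inequality controls $\Lambda_q$ only for $\theta\ge0$, hence $I_q\le\widetilde I$ only for $x\ge0$; for even $q$ the paper shows $I_q(x)>\widetilde I(x)$ for small $x<0$.)

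Your route to (ii) also differs from the paper's and has a soft gap. You deduce $\Lambda_q\to\widetilde\Lambda$ from the matching of Taylor coefficients up to order $q$ plus ``uniform analytic bounds on a common disk.'' The spectral analysis gives a complex domain $\mathcal D$ and bounds that a priori depend on $q$, and even granting a common disk of some fixed radius $r$ this argument only yields convergence for $|\theta|<r$, not on arbitrary compact subsets of $\R$. The paper instead observes that $\Phi_{\theta,q}$ converges in operator norm (Riemann sums to integrals) to the rank-one operator $\widetilde\Phi_\theta$ with eigenvalue $e^{\widetilde\Lambda(\theta)}$, so eigenvalue perturbation gives pointwise convergence for every real $\theta$, which convexity upgrades to local uniform convergence before passing to the Legendre transform. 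You should either adopt that argument or supply genuinely $q$-uniform analyticity bounds on disks of arbitrary radius.
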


\vskip 2mm
\begin{figure}
  \label{fig}
\begin{center}
\includegraphics[height=6.91cm]{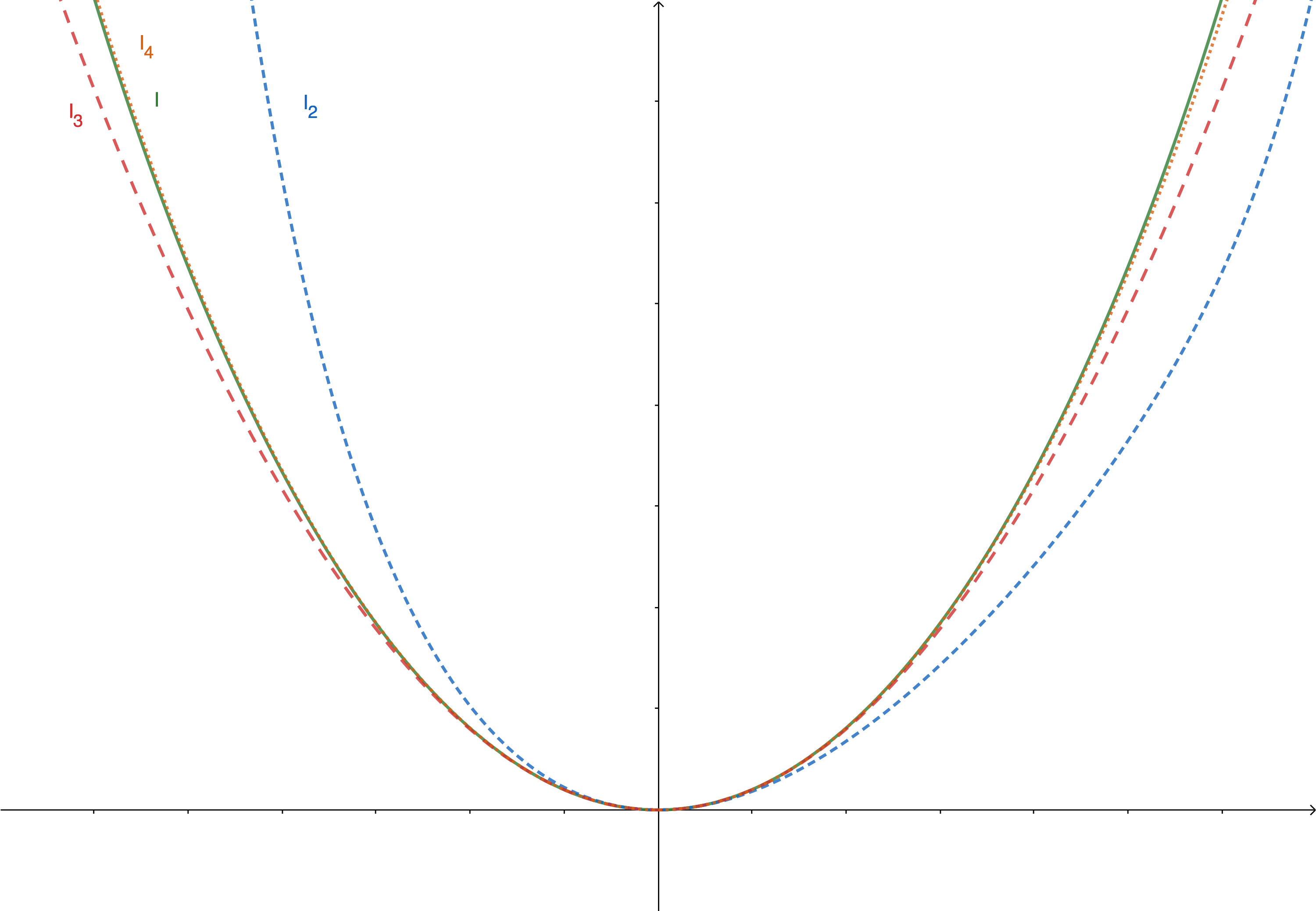} \\
% \label{fig}
% \caption{ 
   
   \begin{flushleft}
   Figure~\ref{fig}. Approximations to the rate function $\widetilde{I}$ (green) corresponding to the case of independent random variables and to the first three rate functions $I_2$ (blue), $I_3$ (red), and $I_4$ (orange) that appear in Theorem \ref{th2}.
   \end{flushleft}
   % }
\end{center}
\end{figure}

Indeed, we will see in Proposition \ref{prop:I_2_I_3} that not only is $I_q \neq \widetilde I$ but
it is also true that $I_{q_1} \neq I_{q_2}$ if $q_1\neq q_2$.

We comment on  Theorem \ref{th2}. The fact that $(S_n/n)_{n \in\N}$ satisfies an LDP will be deduced from general results
on thermodynamic formalism and expanding maps of the interval $[0,1]$ (see
the proof of the theorem in Section \ref{subs-pf-theoremB}).
 The key takeaways 
of the theorem are the properties of the rate functions in (i)--(iv).
Parts~(ii) and (iii) state that  the rate functions $I_q$ converge towards $\widetilde{I}$ as $q\to\infty$, which is in accordance with the ``limiting case'' of Theorem \ref{th1} where the ratio of $a_{k+1}/a_k$ diverges to $+\infty$, and where the rate function for the lacunary sums coincides with the one for the truly independent case. The rate function $\tilde{I}$ and $I_q$ for small
  $q$,  are illustrated in Figure~\ref{fig}.

Note also that as a consequence of conclusion (i) of Theorem \ref{th2}, the probability of large deviations of the lacunary sums $S_n$ is (asymptotically) greater than the  large deviation probability for the corresponding partial sum
of independent  random variables $\widetilde{S}_n$ defined in  \eqref{def-wtildesn}. 
However, since the statement (i) only applies to positive values of $x$, this conclusion is only valid for large \emph{positive} deviations of the lacunary sum. In the case of large \emph{negative} deviations there seems to be an interesting dichotomy. When $q$ is \emph{odd}, then the lacunary sums have a distribution symmetric around $0$, which is a consequence of the fact that the mapping $\omega \mapsto \omega +1/2 \pmod 1$ transforms the function  $\cos (2 \pi q^k \omega)$ into  $\cos (2 \pi q^k (\omega  +1/2)) = - \cos (2 \pi q^k \omega)$. 
Accordingly, the probabilities of large positive and large negative deviations are equal, and we have $I_q(-x) = I_q(x)$, so that in the odd case we have $I_q (x) \leq \widetilde{I}(x)$ for all $x \neq 0$, with $I_q(x) < \widetilde{I}(x)$ for all sufficiently
small $|x|$ due to  (iv). In contrast, when $q$ is \emph{even}, there is no such symmetry. In fact, for even $q$, it follows from (iii) and (iv) that $\Lambda_q(\theta) < \widetilde \Lambda(\theta)$ for $\theta<0$ sufficiently close to $0$ (because the coefficient of $\theta^{q+1}$, an odd power, in the Taylor series of $\Lambda_q(\theta)$ is larger than that of $\widetilde \Lambda(\theta)$, while the smaller powers coincide). By taking the corresponding Legendre-Fenchel transforms, it follows that $I_q(x)>\widetilde I(x)$ for $x<0$ sufficiently close to $0$.
We believe the above inequalities hold without restricting $|x|$ to be sufficiently small, as stated below in this conjecture:

\begin{conj}
Let $q \in \{2,3,\ldots\}$ and let $a_k = q^k$ for $k \in \N$. Let $I_q$ be the rate function in the LDP for $(S_n/n)_{n \in\N}$ (which exists by Theorem \ref{th2}).
Then, if $q$ is \emph{odd},
$$
I_q (x) < \widetilde{I}(x) \quad \text{for all} \quad x\in (-1,1)\backslash \{0\}.
$$
On the other hand, if $q$ is \emph{even}, then
$$
I_q (x) < \widetilde{I}(x) \quad \text{for all} \quad 0<x<1
\qquad
\text{ and }
\qquad
I_q (x) > \widetilde{I}(x) \quad \text{for all} \quad -1<x<0.
$$
\end{conj}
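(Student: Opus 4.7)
\smallskip
The plan is to reduce the conjecture to a pointwise comparison of the cumulant generating functions $\Lambda_q$ and $\widetilde\Lambda$ via Legendre--Fenchel duality, and then establish the latter using the Chebyshev dynamical structure underlying the problem. For any $\theta\in\R$ one has $I_q(x)\geq\theta x-\Lambda_q(\theta)$ with equality at the optimizer $\theta_q^*(x)$, and the analogous bound for $\widetilde I$. Since $\widetilde\Lambda$ is smooth and strictly convex with $\widetilde\Lambda'(\theta)\to\pm 1$ as $\theta\to\pm\infty$, the map $x\mapsto\widetilde\theta^*(x):=(\widetilde\Lambda')^{-1}(x)$ is a diffeomorphism of $(-1,1)$ onto $\R$ that carries $(-1,0)$ onto $(-\infty,0)$. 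Plugging $\theta=\widetilde\theta^*(x)$ into the bound above yields
\begin{equation*}
I_q(x)-\widetilde I(x)\;\geq\;\widetilde\Lambda\bigl(\widetilde\theta^*(x)\bigr)-\Lambda_q\bigl(\widetilde\theta^*(x)\bigr),
\end{equation*}
and the symmetric choice $\theta=\theta_q^*(x)$ gives the reverse bound with the roles of the two cumulant generating functions exchanged. Consequently, it will suffice to establish $\Lambda_q(\theta)<\widetilde\Lambda(\theta)$ on $(-\infty,0)$ to obtain $I_q>\widetilde I$ on $(-1,0)$, and the opposite pointwise inequality to obtain $I_q<\widetilde I$ on $(-1,0)$. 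The odd case then falls out immediately from part~(i) of Theorem~\ref{th2}: the same duality applied to part~(i) shows $\Lambda_q(\theta)>\widetilde\Lambda(\theta)$ for every $\theta>0$ and every $q\geq 2$, and for odd $q$ the substitution $\omega\mapsto\omega+1/2\pmod{1}$ transforms each $\cos(2\pi q^k\omega)$ into its negative, so $S_n$ is symmetric and $\Lambda_q$ is even. Combined with the evenness of $\widetilde\Lambda$ (the arcsine law is symmetric), this extends the inequality to $(-\infty,0)$, yielding $I_q<\widetilde I$ on $(-1,0)$.

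\smallskip
For the even case I would first confirm the desired inequality $\Lambda_q(\theta)<\widetilde\Lambda(\theta)$ at each end of the half-line $(-\infty,0)$. Near the origin, parts~(iii) and~(iv) of Theorem~\ref{th2} show that $\Lambda_q-\widetilde\Lambda$ is real-analytic with Taylor expansion $\tfrac{\theta^{q+1}}{2^q\,q!}+O(\theta^{q+2})$ at~$0$; since $q+1$ is odd, this is strictly negative on a left neighborhood of~$0$. For the opposite end, the Chebyshev identity $\cos(2\pi q^{k+1}\omega)=T_q\bigl(\cos(2\pi q^k\omega)\bigr)$, with $T_q$ the $q$-th Chebyshev polynomial of the first kind, identifies $(X_k)_{k\in\N}$ with the orbit of the expanding map $T_q\colon[-1,1]\to[-1,1]$ preserving the arcsine measure~$\mu$. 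For even $q$ one has $T_q(-1)=1$, so no $T_q$-invariant probability measure is supported at $-1$; by compactness $x_{\min}:=\inf_{\nu}\int x\,d\nu>-1$, where the infimum runs over all $T_q$-invariant probability measures $\nu$ (for instance $x_{\min}=-1/2$ when $q=2$, attained at the fixed point $-1/2$ of $T_2(x)=2x^2-1$). Standard thermodynamic formalism then yields $\Lambda_q(\theta)/\theta\to x_{\min}$ as $\theta\to-\infty$, whereas $\widetilde\Lambda(\theta)/\theta\to-1$; since $x_{\min}>-1$ and $\theta<0$, this gives $\Lambda_q(\theta)<\widetilde\Lambda(\theta)$ for all sufficiently large $|\theta|$ with $\theta<0$.

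\smallskip
The main obstacle is then to interpolate between these two regimes and rule out any sign change of the analytic function $\Lambda_q-\widetilde\Lambda$ in the interior of $(-\infty,0)$. In the thermodynamic picture, $e^{\Lambda_q(\theta)}$ is the leading eigenvalue of the weighted Ruelle transfer operator $\mathcal L_\theta$ associated with the $T_q$-dynamics and potential $\theta x$, whereas $e^{\widetilde\Lambda(\theta)}=\int e^{\theta x}\,d\mu=\int\mathcal L_\theta\mathbf 1\,d\mu$. To close the argument I would pursue either (a)~a monotonicity or log-convexity estimate for $\Lambda_q-\widetilde\Lambda$ on $(-\infty,0)$ excluding interior zeros, derived from the second variation of the pressure along a family of potentials interpolating between the dependent and independent models, or (b)~a periodic-orbit / thermodynamic zeta-function expansion in which the corrections coming from the Diophantine resonances $\sum_i\epsilon_i m_i q^i=0$ (precisely the obstructions to factorization of the joint distribution of $(X_1,\ldots,X_n)$) are exhibited with a definite negative sign throughout $(-\infty,0)$ when $q$ is even. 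The hardest step, in my view, is the uniform control of this correction across the intermediate range of $\theta$, since the analyticity of $\Lambda_q-\widetilde\Lambda$ together with the endpoint comparisons above only localize the possible sign changes without eliminating them.
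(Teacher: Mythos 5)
The statement you are trying to prove is stated in the paper as a \emph{Conjecture}: the authors offer no proof, only the local observations that precede it (namely, $\Lambda_q(\theta)<\widetilde\Lambda(\theta)$ for $\theta<0$ near $0$ when $q$ is even, hence $I_q(x)>\widetilde I(x)$ for $x<0$ near $0$, together with the symmetry $I_q(-x)=I_q(x)$ for odd $q$). So there is no ``paper proof'' to match yours against, and your proposal, by your own admission, does not close the argument either. Your preparatory steps are essentially sound and consistent with the paper: the Legendre--Fenchel reduction to a pointwise comparison of $\Lambda_q$ and $\widetilde\Lambda$ is correct (evaluating the dual bound at the optimizer of the \emph{other} function is the right trick, and is exactly how the paper deduces $I_q<\widetilde I$ on $(0,1)$ from $\Lambda_q>\widetilde\Lambda$ on $(0,\infty)$ in the proof of Theorem~B(i)); the odd case does follow from Theorem~B(i) combined with the evenness of $I_q$ and $\widetilde I$, so that half of the conjecture is, in effect, already a consequence of what the paper proves; and your expansion $\Lambda_q(\theta)-\widetilde\Lambda(\theta)=\theta^{q+1}/(2^q q!)+O(\theta^{q+2})$ reproduces the paper's own computation from parts (iii)--(iv).

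The genuine gap is exactly where you locate it, and it is the reason the statement is a conjecture rather than a theorem: for even $q$ you must show that the real-analytic function $\Lambda_q-\widetilde\Lambda$ has no zero on all of $(-\infty,0)$, and neither the sign near $\theta=0^-$ nor the putative asymptotics as $\theta\to-\infty$ controls the intermediate range. Moreover, your $\theta\to-\infty$ endpoint analysis is itself only heuristic as written: the identification of the slope $\lim_{\theta\to-\infty}\Lambda_q(\theta)/\theta$ with $x_{\min}=\inf_\nu\int x\,d\nu$ over $T_q$-invariant measures requires a variational principle for the \emph{Lebesgue-weighted} pressure (not just existence of the Perron--Frobenius eigenvalue on a complex neighborhood of $\R$, which is all Proposition~3.5 provides), and the claim $x_{\min}=-1/2$ for $q=2$ needs an ergodic-optimization argument, not just the exhibition of the fixed point $-1/2$ of $T_2$. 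Note also that the paper explicitly flags that it is unknown whether $I_q(x)$ is even finite for all $-1<x<0$ when $q$ is even, which is closely tied to the same missing control. In short: your proposal is a reasonable research program whose completed pieces coincide with what the paper already establishes, but the conjecture's open content --- the global sign of $\Lambda_q-\widetilde\Lambda$ on the negative half-line for even $q$ --- remains unproved in both your write-up and the paper.
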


As  argued above, we have $\widetilde I(\pm 1) = +\infty$. Since $|S_n/n|\leq 1$, it is clear that $I_q(x) = +\infty$ for $|x|>1$.  The next lemma,
whose proof is given at the end of Section \ref{subs-pf-theoremB}, 
states that $I_q(+1)$ is finite.
\begin{lemma}\label{lem:I_q_+1}
For all $q \in \{2,3,\ldots\}$  we have $I_q(+1) \leq \log q$.
\end{lemma}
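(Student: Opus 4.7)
The plan is to derive the upper bound $I_q(1) \leq \log q$ by combining an explicit lower bound on the probability that $S_n/n$ lies near $1$ with the LDP upper bound supplied by Theorem~\ref{th2}. The arithmetic observation that drives everything is specific to the geometric progression $a_k = q^k$: shrinking $\omega$ by a factor of order $q^{-n}$ simultaneously forces all the arguments $2\pi q^k \omega$, $k = 1,\ldots,n$, to stay small, so \emph{every} cosine in the partial sum is close to $1$.

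First, fix $\delta \in (0, 1/4)$ and any $\omega \in [0, \delta q^{-n}]$. Then $q^k \omega \leq \delta\, q^{k-n} \leq \delta$ for all $k \in \{1,\ldots,n\}$, so $\cos(2\pi q^k \omega) \geq \cos(2\pi \delta)$, and averaging gives $S_n(\omega)/n \geq \cos(2\pi \delta)$. Given $\varepsilon > 0$, choose $\delta = \delta(\varepsilon) \in (0,1/4)$ with $\cos(2\pi \delta) \geq 1 - \varepsilon$; this yields the very simple lower bound
\[
\Pro\bigl(S_n/n \in [1-\varepsilon, 1]\bigr) \;\geq\; \Pro\bigl(\omega \in [0, \delta q^{-n}]\bigr) \;=\; \delta\, q^{-n}.
\]
Taking $\tfrac{1}{n} \log$ followed by $\liminf_n$ gives $\liminf_n \tfrac{1}{n} \log \Pro(S_n/n \in [1-\varepsilon,1]) \geq -\log q$.

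Next, I apply the LDP upper bound from Theorem~\ref{th2} to the closed set $[1-\varepsilon, 1]$ and chain it with the above lower bound:
\[
-\inf_{x \in [1-\varepsilon, 1]} I_q(x) \;\geq\; \limsup_{n \to \infty} \frac{1}{n} \log \Pro\bigl(S_n/n \in [1-\varepsilon, 1]\bigr) \;\geq\; -\log q,
\]
so that $\inf_{x \in [1-\varepsilon, 1]} I_q(x) \leq \log q$. Since $I_q$ is lower semi-continuous on the compact interval $[1-\varepsilon, 1]$, this infimum is attained at some point $x_\varepsilon \in [1-\varepsilon, 1]$.

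Finally, letting $\varepsilon \downarrow 0$ gives $x_\varepsilon \to 1$, and a last appeal to the lower semi-continuity of $I_q$ at the point $1$ yields
\[
I_q(1) \;\leq\; \liminf_{\varepsilon \to 0^+} I_q(x_\varepsilon) \;\leq\; \log q,
\]
which is the desired bound. I do not anticipate any substantive obstacle here: the only real content is the Lebesgue-measure computation on the nested cylinder $[0, \delta q^{-n}]$, and the heavy lifting of converting a probability lower bound into a bound on the rate function is already packaged in Theorem~\ref{th2}.
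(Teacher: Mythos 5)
Your proof is correct and follows essentially the same route as the paper: both restrict to the interval $[0,\,c\,q^{-n}]$, on which every $\cos(2\pi q^k\omega)$, $k\le n$, is within $\varepsilon$ of $1$, obtain the measure lower bound of order $q^{-n}$, and then pass to the rate function via the LDP and lower semicontinuity as $\varepsilon\downarrow 0$. The only cosmetic difference is that the paper bounds $I_q(1-\varepsilon)$ directly using the quantitative estimate $\cos x\ge 1-x^2/2$, whereas you work with a minimizer of $I_q$ over $[1-\varepsilon,1]$; both are valid.
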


As explained above, for odd $q$ we have $I_q(-1)= I_q(+1)$. For even $q$, it remains unclear whether $I_q(-1)$ is finite (and in fact, it is not even clear whether $I_q(x)$ is finite for all $-1<x<0$).

\vspace*{2mm}
The functions $\Lambda_q$ and $I_q$ appearing in Theorem~\ref{th2} are not really explicit. In fact, the only known  formula for $\Lambda_q$ seems to be its representation as the logarithm of the largest eigenvalue of a certain Perron-Frobenius operator (see the proof of Theorem~\ref{th2} in Section \ref{subs-pf-theoremB}).  The next proposition identifies the first few terms in the Taylor expansions of $I_q$ for $q\in \{2,3,4\}$.
Before stating it, let us look at the Taylor series of the rate function $\widetilde I$. From the expression for $\widetilde I$  in \eqref{def-widetildeI} and  properties of the Legendre-Fenchel transform, it follows that the derivative $\widetilde I'$ of $\widetilde I$ is the inverse function of the derivative $\widetilde \Lambda'$ of $\widetilde \Lambda$, see \cite[Corollary 23.5.1, p. 219]{R1970}.     
 Using this fact together with the expression for $\widetilde{\Lambda}$ in \eqref{arcsin_cumgf}, which yields the series expansion 
\begin{equation}
\widetilde \Lambda (\theta)
=
\frac{\theta^2}{4}-\frac{\theta^4}{64}+\frac{\theta^6}{576}-\frac{11 \theta^8}{49152}+\frac{19 \theta^{10}}{614400}-\frac{473 \theta^{12}}{106168320}+O\left(\theta^{14}\right),
\qquad
\mbox{ as } \theta\to 0,
\end{equation}
 one can easily compute the first few terms in the Taylor series of $\widetilde I$ near $0$:
\begin{equation}\label{eq:I_tilde_taylor}
\widetilde I(z) = z^2+\frac{z^4}{4}+\frac{5 z^6}{36}+\frac{19 z^8}{192}+\frac{143 z^{10}}{1800}+\frac{1769 z^{12}}{25920}+
O(z^{14}),
\qquad
\mbox{ as } z\to 0.
\end{equation}

\begin{prop}\label{prop:I_2_I_3}
In the case when $a_k = 2^k$ for all $k\in\N$, the Taylor expansion of the rate function around $0$ is given by
$$
I_2(z) = z^2-z^3+\frac{3 z^4}2-\frac{13 z^5}6+\frac{29 z^6}9-\frac{23 z^7} 5+\frac{1127 z^8}{180} - \frac{29083 z^9}{3780}+\frac{12077 z^{10}}{1575} + O(z^{11}).
$$
In the case when $a_k = 3^k$ for all $k\in\N$, the rate function satisfies
$$
I_3(z) = z^2- \frac{z^4}{12} + \frac{z^6}{6}- \frac{39 z^8}{320} + \frac{18113 z^{10}}{100800} + O(z^{12}), 
$$
whereas  when $a_k = 4^k$ for all $k\in\N$, we have
$$
I_4(z) = z^2+ \frac{z^4}{4} - \frac{z^5}{12} + \frac{5 z^6}{36} + O(z^7).
$$
In particular, the functions $I_2,I_3,I_4$, and $\widetilde I$  all differ from each other. 
\end{prop}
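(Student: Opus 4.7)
The plan is to compute the Taylor coefficients of $\Lambda_q$ at the origin to sufficient order, and then extract those of the Legendre--Fenchel dual $I_q$ by Lagrange inversion. By Theorem~\ref{th2}, $\Lambda_q$ is real-analytic on a neighborhood of the real line; moreover $\Lambda_q(0)=0$ and $\Lambda_q'(0)=\E[X_1]=0$, while $\Lambda_q''(0)>0$, so the relation $z = \Lambda_q'(\theta)$ is locally invertible near $0$ and the standard Lagrange-inversion formulas yield the Taylor coefficients of $I_q(z) = \theta(z)\,z - \Lambda_q(\theta(z))$ from those of $\Lambda_q$ term by term.

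To compute the Taylor coefficients of $\Lambda_q$ we use
\[
\Lambda_q(\theta) = \sum_{m\geq 1} \frac{c_m(q)}{m!}\,\theta^m, \qquad c_m(q) = \lim_{n\to\infty}\frac{\kappa_m(S_n)}{n},
\]
where $\kappa_m$ denotes the $m$th cumulant. Via the product-to-sum identity $2\cos\alpha = e^{i\alpha}+e^{-i\alpha}$ we have
\[
\E\bigl[X_{k_1}\cdots X_{k_m}\bigr] = 2^{-m}\,\#\Bigl\{\vec\epsilon\in\{\pm1\}^m : \sum_{j=1}^m \epsilon_j q^{k_j}=0\Bigr\},
\]
so the moments $\E[S_n^m]$ reduce to counting pairs $(\vec k,\vec\epsilon)\in\{1,\dots,n\}^m\times\{\pm1\}^m$ satisfying $\sum_j\epsilon_j q^{k_j}=0$. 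For fixed $m$ this Diophantine condition is enumerated using uniqueness of base-$q$ expansions together with the nontrivial carry identities such as $2^{k+1}-2^k-2^k=0$; such carry relations are plentiful for $q=2$ but increasingly scarce for $q=3,4$, which is precisely what makes $I_q$ depend on $q$.

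We then obtain $\kappa_m(S_n)$ from $\E[S_n^{\ell}]$, $\ell \le m$, via the classical moment-cumulant (M\"obius) inversion formula, retaining only the part of the resulting expression that grows linearly in $n$; this linear-in-$n$ contribution corresponds to the ``connected'' configurations of indices and equals $c_m(q)$. Performing this enumeration for each $q\in\{2,3,4\}$ up to the order specified in the proposition and applying Lagrange inversion to $z=\Lambda_q'(\theta)$ inside $I_q(z)=\theta z-\Lambda_q(\theta)$ yields the claimed Taylor expansions.

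The main obstacle is bookkeeping: the number of Diophantine solutions grows rapidly with $m$, and the extraction of the linear-in-$n$ part through moment-cumulant inversion involves substantial cancellation; this is mechanical and well-suited to computer algebra, but tedious by hand. The pairwise distinctness of $I_2,I_3,I_4$ and $\widetilde I$ then follows immediately by inspection: for example, the coefficient of $z^3$ equals $-1$ in $I_2$ and $0$ in each of $I_3$, $I_4$ and $\widetilde I$; the coefficient of $z^4$ equals $-1/12$ in $I_3$ and $1/4$ in each of $I_4$ and $\widetilde I$; and the coefficient of $z^5$ equals $-1/12$ in $I_4$ and $0$ in $\widetilde I$.
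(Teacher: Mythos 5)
Your proposal follows essentially the same route as the paper's Appendix~\ref{ap-a}: reduce $\E[S_n^m]$ to counting solutions of $\sum_j \eps_j q^{k_j}=0$, pass to cumulants, identify $\lim_{n\to\infty}\kappa_m(S_n)/n$ with $\Lambda_q^{(m)}(0)$ via the uniform convergence from Theorem~\ref{th2}, and recover $I_q$ by inverting $\Lambda_q'$. The only substantive step you leave implicit is how to determine the solution count as a function of $n$ (not merely for a single $n$), which is needed before the moment--cumulant cancellations can be carried out; the paper's Proposition~\ref{prop:a_m_n} supplies exactly this, showing that the count $A_m(n)$ is a polynomial in $n$ of degree at most $[m/2]$ for $n\geq m-2$, so that finitely many computer-algebra evaluations plus interpolation suffice.
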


In fact, in the proof of this proposition, which is deferred to 
Appendix \ref{ap-a}, 
we  describe an algorithm  to compute an arbitrary number of terms in the Taylor expansion of $I_q$ 
 for every $q\in \{2,3,\ldots\}$. The algorithm, as well as the proof of properties (i)--(iv) in 
Theorem~\ref{th2},  are based on an analysis of the number of representations of $0$ as a sum of $m$ terms of the form $\pm q^1, \pm q^2,\ldots, \pm q^n$. Denoting this number by $A_m(n)$, we  prove in Proposition~\ref{prop:a_m_n} that for fixed $m\in\N$, it is a polynomial in $n$ for all $n\geq m-2$.  This fact allows us to compute the first few moments of $S_n$ and prove the above expansions.

\vskip 2mm
Let us recall from~\eqref{thm:clt salem zygmund} that $(S_n/\sqrt n)_{n\in \N}$ satisfies a central limit theorem  under the Hadamard gap condition \eqref{Hadamard-gap}. The next theorem states that, perhaps
surprisingly, the LDP does not hold in the same generality. More precisely, by mixing up powers of $2$ and $3$ we shall obtain an example of an Hadamard gap sequence $(a_k)_{k\in\N}$ for which the corresponding scaled partial sums $(S_n/n)_{n\in\N}$ fail to satisfy an LDP.
This is stated in the following result,
which is proved in Section \ref{subs-pf-theoremC}.

\begin{thmalpha} \label{th3}
  There exists a sequence of positive integers $(a_k)_{k \in\N}$ satisfying $a_{k+1} / a_k \geq q$ 
for some $q>1$ and all $k\in\N$, for which $(S_n/n)_{n \in\N}$ does not satisfy an LDP with speed $n$. More precisely, for this sequence $(a_k)_{k\in\N}$ there exists $\bar{x}_0\in (0,1)$ such that for all $x_0 \in (0,\bar{x}_0)$,  
$$
0< \liminf_{n \to \infty} -\frac 1n \log \mathbb{P} (S_n > n x_0) < \limsup_{n \to \infty} -\frac 1n \log \mathbb{P} (S_n > n x_0) <\infty.
$$
\end{thmalpha}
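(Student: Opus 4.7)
The plan is to build $(a_k)_{k\in\N}$ by interleaving long blocks of consecutive powers of $2$ and consecutive powers of $3$, with block lengths growing so fast that the earlier blocks contribute only $o(n)$ to $S_n$. Concretely, fix a rapidly increasing sequence $n_0 := 0 < n_1 < n_2 < \cdots$ with $n_{j-1}/n_j \to 0$ (for instance $n_j := j!$), set $L_j := n_j - n_{j-1}$, and on the $j$-th block $\{n_{j-1}+1,\ldots,n_j\}$ take $a_k := q_j^{c_j + k - n_{j-1}}$, where $q_j := 2$ for $j$ odd and $q_j := 3$ for $j$ even, while $c_1 := 0$ and, for $j \geq 2$, $c_j \in \N$ is the smallest integer with $q_j^{c_j+1} \geq 2\, a_{n_{j-1}}$. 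Inside each block the ratio $a_{k+1}/a_k$ equals $q_j \geq 2$, and the choice of $c_j$ forces $a_{n_{j-1}+1}/a_{n_{j-1}} \geq 2$ at the block transitions, so the Hadamard gap condition holds with gap constant $q = 2$.

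The key probabilistic step is to analyse $S_n$ along the subsequence $n = n_j$. Write $S_{n_j} = R_j + T_j$, where $R_j := S_{n_{j-1}}$ satisfies $|R_j| \leq n_{j-1} = o(n_j)$ and $T_j := \sum_{k = n_{j-1}+1}^{n_j} \cos(2\pi a_k U)$. Because $q_j^{c_j}$ is a positive integer, the map $u \mapsto q_j^{c_j} u \bmod 1$ pushes Lebesgue measure to itself, so $V := q_j^{c_j} U \bmod 1$ is uniform on $[0,1]$ and
\begin{equation*}
T_j \stackrel{d}{=} \sum_{i=1}^{L_j} \cos(2\pi q_j^i V),
\end{equation*}
which is exactly the object treated in Theorem~\ref{th2} with $q = q_j$. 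Combining $|R_j| = o(n_j)$ with $L_j/n_j \to 1$ gives the sandwich
\begin{equation*}
\Pro\bigl(T_j > L_j(x_0 + \eta_j)\bigr) \leq \Pro(S_{n_j} > n_j x_0) \leq \Pro\bigl(T_j > L_j(x_0 - \eta_j)\bigr)
\end{equation*}
for some $\eta_j \downarrow 0$. By Theorem~\ref{th2} each $I_q$ is convex, lower semicontinuous, finite on $(-1,1)$, and vanishes only at $0$, so it is increasing on $[0,1)$; the standard conversion from the LDP to tail asymptotics therefore yields
\begin{equation*}
\lim_{\substack{j \to \infty \\ j \text{ odd}}} -\tfrac{1}{n_j}\log \Pro(S_{n_j} > n_j x_0) = I_2(x_0), \quad \lim_{\substack{j \to \infty \\ j \text{ even}}} -\tfrac{1}{n_j}\log \Pro(S_{n_j} > n_j x_0) = I_3(x_0),
\end{equation*}
for every $x_0 \in (0,1)$.

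Proposition~\ref{prop:I_2_I_3} delivers the required separation: the Taylor expansions of $I_2$ and $I_3$ at the origin disagree already at the cubic term, so there is a $\bar{x}_0 \in (0,1)$ with $I_2(x_0) \neq I_3(x_0)$ for all $x_0 \in (0,\bar{x}_0)$, and the two subsequential limits above are distinct, forcing $\liminf < \limsup$. To rule out pathological behaviour at arbitrary $n$ I would add two uniform-in-$n$ estimates: the trivial lower bound $\Pro(S_n > n x_0) \geq \Pro(U \leq \delta / a_n) = \delta / a_n$ (as in the sublacunary remark), combined with $\log a_n = O(n)$ from the construction, gives $\limsup -\tfrac{1}{n}\log \Pro(S_n > nx_0) < \infty$; and the classical Khintchine-type sub-Gaussian estimate $\|S_n\|_{L^{2p}} \leq C_q\sqrt{p n}$ valid for any Hadamard lacunary cosine sum yields $\Pro(S_n > n x_0) \leq e^{-c\, n x_0^2}$, hence $\liminf -\tfrac{1}{n}\log \Pro(S_n > nx_0) > 0$. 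The main obstacle is the bookkeeping involved in simultaneously guaranteeing the Hadamard gap at every block transition \emph{and} the integrality of the multiplier $q_j^{c_j}$, so that the substitution $V = q_j^{c_j} U \bmod 1$ really produces a uniform random variable and lets one appeal directly to Theorem~\ref{th2}; once this is arranged, the separation of subsequential limits is immediate from Proposition~\ref{prop:I_2_I_3}.
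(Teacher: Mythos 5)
Your proposal is correct and follows the same overall strategy as the paper: interleave long blocks of consecutive powers of $2$ and of $3$ so that along one subsequence the exponential decay rate is governed by $I_2(x_0)$ and along another by $I_3(x_0)$, then invoke Proposition~\ref{prop:I_2_I_3} to separate these two values for small $x_0>0$. The differences are in execution, and they are worth noting. The paper chooses the block lengths adaptively, extending each block until the running rate crosses prescribed thresholds $c_-<c_+$, and reduces a block to the pure geometric case by replacing the discarded prefix with the missing low powers $2,2^2,\dots,2^{m-1}$ and absorbing the resulting error $n+m=o(N)$ into a shift of the threshold; you fix the block lengths a priori ($n_j=j!$) and reduce each block \emph{exactly in distribution} to $S^{(q_j)}_{L_j}$ via the measure-preserving map $u\mapsto q_j^{c_j}u \bmod 1$, which is a cleaner step that avoids the prefix bookkeeping (the $o(n_j)$ error from earlier blocks being handled by the trivial bound $|R_j|\le n_{j-1}$). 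You are also more explicit than the paper about the outer inequalities: the paper's back-and-forth argument only delivers $\liminf\le c_-<c_+\le\limsup$, whereas you supply the uniform bounds $\Pro(S_n>nx_0)\ge \delta/a_n$ with $\log a_n=O(n)$ for $\limsup<\infty$, and the classical Salem--Zygmund/Khintchine-type sub-Gaussian estimate for Hadamard lacunary sums for $\liminf>0$; the latter is an appeal to an external but standard fact. One small point to tighten: you assert $I_q$ is finite on all of $(-1,1)$, which the paper explicitly leaves open for negative arguments when $q$ is even; however, you only need finiteness, continuity, and strict monotonicity on $[0,1)$, and these follow from convexity, $I_q(0)=0$, $I_q(x)>0$ for $x\neq 0$, and Lemma~\ref{lem:I_q_+1}.
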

  Note that  if $(S_n)_{n \in \N}$ did satisfy  an LDP with speed  $n$
  and rate function $I$,  then the last display would imply that 
   $\inf_{x \in [0,x_0)} I(x) \neq \inf_{x \in [0,x_0]} I(x)$ for all
    $x_0  \in (0, \bar{x}_0)$, which, in turn, implies that
    $I$ is not continuous at any point in $(0,x_0)$. This leads to
    a contradiction since 
     $I$ must be lower-semicontinuous, thus showing that
    $(S_n)_{n \in \N}$ does not satisfy  an LDP with speed  $n$. 

\subsection{Randomized perturbation}
\label{subs-randperb}

Theorem \ref{th1} and Theorem \ref{th2} might together give the impression that the existence of a limit of $a_{k+1}/a_k$ as $k\to\infty$ ensures an LDP
  and its value determines the rate function. In particular, one might be tempted to conjecture that the condition $a_{k+1}/a_k \to \infty$ is  necessary for the rate function 
in the LDP to coincide with $\widetilde{I}$, the corresponding rate function for the independent case.
However, this is not true.
  As Theorem \ref{th4} below shows, it is possible to construct lower order random perturbations  
$(a_k)_{k \in\N}$ of the sequence $(2^k)_{k \in \N}$  with $\lim_{k \rightarrow \infty} a_{k+1}/a_k = 2$,  
for which the corresponding
sequence $(S_n/n)_{n \in \N}$ almost surely satisfies an LDP with speed $n$ and  rate function $\widetilde{I}$. 
This shows that a random perturbation may completely destroy the underlying dependence, at least at the large deviation
scale, and, as also further elaborated in Section \ref{sec-conclusion},
  rather than the asymptotic growth rate of the lacunary sequence $(a_k)_{k\in \N}$, what seems to
  determine the form of the rate function (when an LDP holds) is the arithmetic structure of   $(a_k)_{k\in \N}$.

\begin{thmalpha} \label{th4} 
  Suppose we are given a  sequence $Y = (Y_k)_{k \in \N}$  of independent~random variables, 
  with each $Y_k$  uniformly distributed on the discrete set
\begin{equation}
  \label{def-discset}
\discset_k := \left\{h 2^{\lceil k^{2/3} \rceil}: h \in \mathbb{Z}, 0 \leq h \leq 2^{\lceil k^{2/3} \rceil} \right\}, \quad   k \in \N, 
\end{equation} 
all supported on a common  probability space $(\yspace, \yfilt, \py)$,
and an independent   random variable $U \sim \mathrm{Unif}(0,1)$.  
Also,  for each $y \in \discsetinf :=$   $\otimes_{k\in\N}\discset_k = $  $\{(y_k)_{k\in\N}\,:\,y_k\in \discset_k\}$,  
define  $a^y_k := 2^k + y_k$ for all $k \in\N$, and
let
\[  S^y_n := \sum_{k=1}^n \cos (2 \pi a_k^y U), \quad n \in \N. \]
Then, for $\py \circ Y^{-1}$-almost every $y \in \discsetinf$,  
the sequence $(S^y_n/n)_{n \in\N}$,  satisfies an LDP with speed $n$ and rate function $\widetilde{I}$.
\end{thmalpha}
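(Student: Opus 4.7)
}

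The plan is to apply the Gärtner--Ellis theorem. Since $\widetilde\Lambda(\theta)=\log I_0(\theta)$ is real-analytic and strictly convex on $\R$, it suffices to show that, for $\py$-almost every $y\in\discsetinf$,
\begin{equation}\label{eq:plan-goal}
\lim_{n\to\infty}\frac{1}{n}\log\E\bigl[e^{\theta S_n^y}\bigr]=\widetilde\Lambda(\theta),\qquad\forall\,\theta\in\R;
\end{equation}
by convexity of the prelimits and of $\widetilde\Lambda$, it is enough to verify \eqref{eq:plan-goal} at each $\theta$ in a countable dense subset, so fix such a $\theta$. The Jacobi--Anger identity $e^{\theta\cos\phi}=\sum_{m\in\Z}I_m(\theta)e^{im\phi}$ (with $I_m$ the modified Bessel function of the first kind), combined with $\int_0^1 e^{2\pi i\ell\omega}d\omega=0$ for $\ell\neq 0$, yields
\[
\E\bigl[e^{\theta S_n^y}\bigr]=\sum_{(m_k)\in\Z^n:\,\sum_k m_k a_k^y=0}\,\prod_{k=1}^n I_{m_k}(\theta).
\]
The trivial tuple $(m_k)\equiv 0$ alone contributes $I_0(\theta)^n=e^{n\widetilde\Lambda(\theta)}$, so the lower bound in \eqref{eq:plan-goal} is automatic.

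For the matching upper bound, the strategy is to first average over $y$ and then apply Markov together with Borel--Cantelli. The plan is to establish
\begin{equation}\label{eq:plan-dag}
\ey\bigl[\E[e^{\theta S_n^y}]\bigr]\leq C(\theta)\,I_0(\theta)^n
\end{equation}
with $C(\theta)$ independent of $n$; given this, Markov gives $\py\bigl(\E[e^{\theta S_n^y}]>e^{n\epsilon}I_0(\theta)^n\bigr)\leq C(\theta)\,e^{-n\epsilon}$, which is summable in $n$, so Borel--Cantelli together with $\epsilon\downarrow 0$ along a sequence delivers the almost-sure upper bound in \eqref{eq:plan-goal}. To prove \eqref{eq:plan-dag}, apply Fubini and the independence of $(y_k)$:
\[
\ey\E[e^{\theta S_n^y}]=\int_0^1\prod_{k=1}^n\ey\bigl[e^{\theta\cos(2\pi a_k^y\omega)}\bigr]\,d\omega.
\]
A further application of Jacobi--Anger together with evaluation of the finite geometric sum over $h\in\{0,\ldots,d_k\}$, where $d_k=2^{\lceil k^{2/3}\rceil}$, expresses each factor as $I_0(\theta)+R_k(\omega)$, where
$R_k(\omega)=\sum_{m\neq 0}I_m(\theta)\,e^{2\pi i m 2^k\omega}\,G_k(md_k\omega)$
and $G_k(x)=(d_k+1)^{-1}\sum_{h=0}^{d_k}e^{2\pi i h x}$ is a Dirichlet-type kernel. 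Two facts drive the estimate: $\int_0^1 R_k\,d\omega=0$, and $\int_0^1|R_k|\,d\omega\lesssim_\theta(\log d_k)/d_k$, the latter being summable in $k$ because $d_k$ grows super-polynomially. Consequently, $\sum_k|R_k(\omega)|<\infty$ for Lebesgue-a.e.\ $\omega$, so the infinite product $\prod_{k\geq 1}(1+R_k(\omega)/I_0(\theta))$ converges absolutely.

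The main technical obstacle is promoting this pointwise a.e.\ finiteness to a uniform-in-$n$ integrable envelope for $\prod_{k=1}^n(1+R_k/I_0(\theta))$, from which \eqref{eq:plan-dag} follows by dominated convergence. The difficulty is to control the measure-zero set of $\omega$ that are anomalously close to rationals of denominator $\asymp d_k$, where $R_k(\omega)$ can be locally large; this requires metric Diophantine estimates of Khinchin type, e.g.\ $\|d_k\omega\|\gtrsim 1/(d_k\log^{1+\delta}d_k)$ for Lebesgue-a.e.\ $\omega$, whose aggregate errors remain summable thanks to the fast growth of $d_k$. Equivalently, one can expand $\int\prod(1+R_k/I_0)\,d\omega$ directly as a sum over integer tuples $(m_k,h_k)$ satisfying the Diophantine constraint $\sum_k m_k(2^k+h_k d_k)=0$, and bound this sum using the Hadamard lacunarity of $(2^k)$, the rapid decay of the Bessel ratios $I_m(\theta)/I_0(\theta)$ in $|m|$, and the normalization factors $(d_k+1)^{-1}$. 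In either approach, it is precisely the sub-exponential but super-polynomial growth $d_k=2^{\lceil k^{2/3}\rceil}$ that simultaneously destroys the arithmetic degeneracies present for the unperturbed sequence $(2^k)$ and preserves the ratio asymptotic $a_k^y/a_{k-1}^y\to 2$, which explains why this particular calibration of the perturbation discrete sets $\discset_k$ is chosen.
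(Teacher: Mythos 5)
Your strategy --- Jacobi--Anger expansion, an annealed first-moment bound, then Markov and Borel--Cantelli --- is genuinely different from the paper's quenched route (a block decomposition of $\{1,\dots,n\}$ with buffer blocks, a deterministic factorization of the polynomial integral over the blocks, and a union bound showing that with $\py$-probability $1-O(n^{-3/2})$ no within-block Diophantine relation survives the random perturbation). But the central estimate you propose, $\ey\bigl[\E[e^{\theta S_n^Y}]\bigr]\leq C(\theta)\,I_0(\theta)^n$, is false once $\theta\geq 1$, and the reason is elementary. For every $y\in\discsetinf$ and $\eps\in(0,1)$, if $0\leq\omega\leq\delta_n:=\sqrt{2\eps}\,/(2\pi\max_{k\leq n}a_k^y)$ then $\cos(2\pi a_k^y\omega)\geq 1-\eps$ for all $k\leq n$, whence
\[
\E\bigl[e^{\theta S_n^y}\bigr]\;\geq\;\delta_n\,e^{\theta(1-\eps)n}\;\geq\;\frac{\sqrt{2\eps}}{4\pi}\,2^{-n}\,e^{\theta(1-\eps)n}
\]
for all large $n$, since $a_k^y\leq 2^k+2^{2\lceil k^{2/3}\rceil}\leq 2^{n+1}$ for $k\leq n$. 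Hence $\liminf_n\frac1n\log\E[e^{\theta S_n^y}]\geq\theta-\log 2$ for \emph{every} $y$, so the annealed expectation obeys the same lower bound; but $\widetilde\Lambda(\theta)=\log I_0(\theta)<\theta-\log 2$ as soon as $2I_0(\theta)<e^\theta$, which already holds at $\theta=1$ (indeed $2I_0(1)\approx 2.53<e$). So your first-moment bound fails by an exponential factor, no Markov/Borel--Cantelli step can repair it, and --- more seriously --- the target identity $\lim_n\frac1n\log\E[e^{\theta S_n^y}]=\widetilde\Lambda(\theta)$ cannot hold for any $y$ when $\theta\geq1$. The G\"artner--Ellis route to the rate function $\widetilde I$ on all of $(-1,1)$ is therefore blocked no matter how the Diophantine counting is organized. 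Equivalently, on the rate-function side, $\Pro(S_n^y\geq(1-\eps)n)\geq c_\eps 2^{-n}$ forces any rate function to stay $\leq\log 2$ on $[1-\eps,1]$, whereas $\widetilde I(0.8)\geq 0.8\cdot 3-\log I_0(3)>\log 2$. This is precisely the mechanism behind Lemma~\ref{lem:I_q_+1}, so the identity you are after can hold at most for $|\theta|$ below a finite threshold and the rate function can agree with $\widetilde I$ only on a proper subinterval of $(-1,1)$; the statement as given is in tension with that lemma, and you should be aware that you are not merely missing a trick.

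Two further points. First, the ``automatic'' lower bound from the trivial tuple requires $\theta>0$: for $\theta<0$ one has $I_m(\theta)=(-1)^mI_m(-\theta)$, so the nontrivial terms are not termwise nonnegative. Second, even in the range of small $\theta$ where an annealed bound is plausible, summability of $\int_0^1|R_k|\,d\omega\lesssim(\log d_k)/d_k$ does not control $\int_0^1\prod_{k\leq n}(1+R_k(\omega)/I_0(\theta))\,d\omega$: expanding the product, the tuples with vanishing fixed part $\sum_k m_k2^k=0$ already carry total Bessel weight $e^{n\Lambda_2(\theta)+o(n)}\gg I_0(\theta)^n$ by Theorem~\ref{th2}(i), so one must prove that the averaging over the $h_k$ beats this exponential excess. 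That separation-of-scales argument between the fixed parts $\sum m_k2^k$ and the perturbation parts $\sum m_kh_kd_k$ is exactly where the paper invests its effort, and it is the part your plan leaves entirely open.
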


In large deviation parlance, the LDP in Theorem \ref{th4} is often referred to as a ``quenched LDP'' since the LDP is 
  conditional on the realization of the sequence $y$, and  not averaged over the randomness of $Y$. 
  Note however, that although  the sequence $(S^y_n/n)_{n \in\N}$ depends on the choice of $y = (y_k)_{k \in \N}$, the rate function  $\widetilde{I}$  of the LDP (which holds for $\py \circ Y^{-1}$-almost every $y$)  does not. 
  The proof of  Theorem \ref{th4} is given in  Section \ref{subs-pf-TheoremD}.
  
\begin{remark}
\label{rem-thD}
{\em 
Note that for every realization $y$ of $Y$  in Theorem \ref{th4}, 
  we have  $2/(1+2^{-k^{1/3}}) \leq a^y_{k+1}/a^y_k \leq 2(1+2^{-(k+1)^{1/3}})$, which implies that, as $k \to \infty$,
  $a_{k+1}^y/a_k^y \to 2$. 
 Thus, Theorem \ref{th4} proves that there exist lacunary sequences
with proper exponential growth (as opposed to super-exponential growth as in Theorem \ref{th1}) that satisfy the LDP with rate function $\widetilde{I}$.
}
\end{remark}
Also, by interleaving the sequence $(2^k)_{k\in\N}$ with the sequence $(a_k^y)_{k\in\N}$ constructed in Theorem~\ref{th4} (in the same way as in the proof of Theorem~\ref{th3}), and using the fact that  $I_2$ does not coincide with $\widetilde I$, it is possible to construct a sequence $(b_k)_{k\in\N}$ such that as $k \to \infty$, $b_{k+1}/b_k \to 2$, but the corresponding lacunary sums do not satisfy an LDP. This sharpens Theorem~\ref{th3}.
Somewhat surprisingly,  even the randomized construction in Theorem \ref{th4}  seems to be quite sensitive.  
While Theorem \ref{th4} can certainly be generalized in many directions, it appears to be more challenging to  prove an analogue when each random component $Y_k$ is sampled from $\{0, 1, 2, \dots, k\}$, or when
it is  sampled from $\{0, 2^k, 2\cdot 2^k, \dots, k \cdot 2^k\}$. As elaborated in the next section, this can be related to the number of solutions of certain corresponding Diophantine  equations
   (see also the proof of Theorem \ref{th4} in Section \ref{subs-pf-TheoremD}).

\subsection{Concluding remarks and further open questions}
\label{sec-conclusion}

\subsubsection{Connection between LDPs and Diophantine equations}

Our results (in particular, Theorems B, C, and D together)  
show that only knowing that $\lim_{k \to \infty} a_{k+1}/a_k = \eta$ for some $\eta>1$ does in general not allow one to
determine the rate function in the LDP for the lacunary sum, or even conclude the existence of an LDP.
The proofs of these results, which are presented in Section \ref{sec-pfs}, 
  often involve  approximating  the exponential function in $\E[e^{\theta S_n}]$ via   a Taylor polynomial in $\theta$. 
  In turn, keeping in mind from \eqref{def-xk} and \eqref{def-sn} that $S_n = \sum_{k=1}^n X_k$
  is a finite sum of trigonometric functions,
  this entails estimates of integrals of products of trigonometric polynomials. Due to the orthogonality of the trigonometric system, calculation of these integrals leads to counting the number of solutions to certain Diophantine equations (with $a_k$'s as variables). The reason why the rate function in the LDP (when it exists) for some lacunary sequences
differs from the one for the independent case may be attributed to the existence of too many solutions to these Diophantine equations. For example, when $a_k = 2^k$ for all $k$, then the equation $2 a_{k} - a_{\ell} = 0$ holds for many combinations of $\ell, k$, namely $\ell = k+1$ for all $k$. The Diophantine equations that appear in this context are always linear homogeneous Diophantine equations with integer coefficients. Thus, there are
many more solutions to such equations when the sequence $(a_k)_{k \in \N}$ allows many quotients $a_\ell / a_k$ that are integers. 
In contrast, when the quotients $a_\ell / a_k$ are bounded away from any integer (and any rational with a small denominator), then
these Diophantine equations would have fewer solutions. Thus, while specific random perturbations such as the one chosen
  in Theorem \ref{th4} may drastically diminish the number of solutions, any generalization of Theorem \ref{th4} would require
  determining  precisely how the Diophantine structure
  is altered by an arbitrary random perturbation, which appears to be highly non-trivial. 
Further, this also suggests that there may still be some information that can be gleaned from the existence of the
limit $\lim_{k \to \infty} a_{k+1}/a_k = \eta$, but only when $\eta$ is a number that is not well approximated by rationals with small denominators, and when the same is true for $\eta^2,\eta^3, \dots$, which correspond to the limits of $a_{k+2}/a_k$, $a_{k+3}/a_k$, and so on.
We formulate this as an open problem.
\begin{problem} \label{th5}
Let $(a_k)_{k \in\N}$ be a lacunary sequence and assume that $a_{k+1}/a_k \to \eta$ for a transcendental number $\eta>1$ (i.e., $\eta$ is not the root of a non-zero polynomial with integer coefficients). Is it true that $(S_n/n)_{n\in\N}$ satisfies an LDP with speed $n$ and rate function $\widetilde{I}$ (i.e., with the same rate function as in the independent case, that is, as for $(\widetilde{S}_n)_{n \in \N}$)?
\end{problem}
It may be that stronger assumptions on $\eta$, such  as a condition on the irrationality measure of $\eta$ and its powers, 
  are necessary to derive the desired conclusion.
  However, we think that such an additional assumption should not be required.
  On the other hand, we believe that just assuming $a_{k+1}/a_k \to \eta$ for an algebraic irrational $\eta>1$ will not be sufficient to deduce an LDP with rate function $\widetilde{I}$. 

  Since the Diophantine structure of the sequence $(a_k)_{k \in\N}$ plays such a key role in establishing the LDP for lacunary trigonometric sums, it would be very interesting to study this phenomenon in more detail. A natural candidate to analyze is the sequence $a_k = 2^k +1,~k \geq 1,$ known from the Erd\H os-Fortet example mentioned in the Introduction. In the context of the CLT and LIL, this sequence and its generalizations have received widespread attention. The different type of behavior resulting from ``pure'' geometric progressions such as $a_k=2^k,~k \geq 1,$ on the one hand, and ``perturbed'' sequences such as $a_k = 2^k + 1,~k \geq 1,$ on the other hand, can be explained analytically in terms of Fourier analysis. However, there is also a very interesting dynamical perspective, where the pure geometric progressions allow a natural interpretation as an ergodic sum, while the perturbed sequences have been interpreted as modified ergodic sums; see for example, \cite{conz, fuku_m, schnell}.\\

\subsubsection{Normal number theory}

From a number theoretic perspective, sequences of the form $(q^k \omega)_{k \in\N}$ are associated with the notion of normal numbers (in base $q$), as introduced by Borel in 1909. It is well known that almost all numbers are normal in any base. The degree of normality of a number can be quantified using uniform distribution theory and discrepancy theory, which by Weyl's criterion and the Erd\H os-Tur\'an inequality naturally leads to trigonometric sums such as the ones studied in the present paper (see \cite{dts,kn} for general background on uniform distribution modulo one and discrepancy theory). LDPs for such sums can thus be viewed as quantifying the relative fraction of ``non-normal'' or ``abnormal'' numbers in a certain base, that is, numbers whose digital structure very significantly deviates from ``normal'' behavior. Such non-normal numbers have been intensively studied in the number theory literature, see for example \cite{agi,mart,ols}. A particularly challenging and interesting topic in normal number theory are questions concerning simultaneous normality resp.\ non-normality in two or more different bases (see for example \cite{ck,poll}). In terms of the large deviation problems studied in the present paper, it would be interesting to quantify the proportion of numbers that are non-normal in two or more different bases.
For example, one could try to establish an LDP to estimate
the probability of the set where two lacunary sums arising from the sequences $(q^k)_{k \in \N}$ and $(r^k)_{k \in \N}$ (for two different bases $q,r \geq 2$) are both large.

In the context of normal numbers, the case of general sequences $(a_{k})_{k \in\N}$ satisfying $\frac{a_{k+1}}{a_k} \in \mathbb{Z}_{\geq 2},~k \geq 1,$ corresponds to normality with respect to so-called Cantor expansions. This is a topic that
has been pioneered by Erd\H os and R\'enyi \cite{erdos-renyi,renyi}, and  received strong attention in recent years; see for example \cite{cantor2,cantor3,cantor1} for recent work, and cf.\ also our proof of a special case of Theorem A in Section \ref{pf-ind-special} below.

\subsubsection{More general lacunary sums}

We finally recall from the introduction that the theory of lacunary trigonometric sums is structurally relatively simple in comparison with the theory of general lacunary sums, where interesting new phenomena show up even in the CLT setting.  In light of this, it would be interesting to study the LDP for 
$$
S_n(\omega) = \sum_{k=1}^n f(a_k \omega),
$$
where $f$ is a centered 1-periodic function (possibly satisfying some regularity assumptions). Already when $f$ is a 2-term trigonometric polynomial (as in the Erd\H os-Fortet example alluded to above) there can be additional arithmetic effects in comparison to the simple case of pure trigonometric sums. It would certainly be interesting to investigate LDPs in this general lacunary setup.  A further challenging step would be to go beyond lacunary sums for a single fixed function $f$ and investigate LDPs for the discrepancy (which is defined as a supremum over indicator functions), in the spirit of Philipp's \cite{phil} resolution of the Erd\H os--G\'al conjecture and Fukuyama's \cite{fuku} very precise results for the LIL for geometric progressions $a_k=q^k$. \\
%\textcolor{red}{Ch: does anyone have further questions or comments here?}
  % % % % % % % % % % % %

\section{Proofs}\label{sec-pfs}
% % % % % % % % % % % %
In our proofs we will make use of the G\"artner-Ellis theorem, which we require in the following form. For a reference, see, for example, \cite[Theorem 2.3.6]{dembo_zeit}.
\begin{thm}[G\"artner-Ellis theorem]
\label{th-GE}
Let $(S_n)_{n \in\N}$ be a sequence of real-valued random variables. Suppose that the limit
$$
\Lambda(\theta) = \lim_{n \to \infty} \frac{1}{n} \log \E \left[ e^{\theta S_n} \right]
$$
exists  for all $\theta \in \R$. 
Assume furthermore that the function $\theta \mapsto \Lambda(\theta)$ is differentiable for all $\theta \in \R$. Then $(S_n/n)_{n \in\N }$ satisfies an LDP  with speed $n$ and  convex  rate function $I$,  which can be expressed as the Legendre-Fenchel transform of $\Lambda$, that is,
$$
I(x) = \sup_{\theta \in \R} \left[\theta x - \Lambda(\theta)\right] \in \R\cup\{+\infty\},
\qquad x\in\R.
$$
\end{thm}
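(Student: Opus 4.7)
The plan is to decouple the LDP into its two natural halves: the upper bound
$\limsup_{n\to\infty}(1/n)\log\Pro(S_n/n \in F) \leq -\inf_{x\in F}I(x)$
for every closed $F\subset\R$, and the matching lower bound for every open set. I would first observe that H\"older's inequality makes each $\theta \mapsto (1/n)\log\E[e^{\theta S_n}]$ convex, hence $\Lambda$ is convex, hence $I=\Lambda^*$ is convex, lower-semicontinuous, non-negative, and attains its unique zero at $x^* := \Lambda'(0)$.

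For the upper bound, Chebyshev's inequality $\Pro(S_n \geq nt) \leq e^{-n\theta t}\E[e^{\theta S_n}]$ yields, after taking $(1/n)\log$ and passing to $\limsup$,
$$\limsup_{n\to\infty}\frac{1}{n}\log\Pro(S_n/n \geq t) \leq \Lambda(\theta) - \theta t, \qquad \theta \geq 0.$$
Optimizing over $\theta \geq 0$ (and symmetrically for the lower tail) produces the half-line bound $-I(t)$. For a general closed $F$, convexity of $I$ reduces any compact portion of $F$ to finitely many half-line complements, while the finiteness of $\Lambda$ on all of $\R$ provides exponential tightness $\limsup(1/n)\log\Pro(|S_n/n| \geq M) \to -\infty$ as $M \to \infty$, and hence handles the unbounded part.

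For the lower bound, the central device is exponential tilting. Fix $x$ in the interior of $\{I < \infty\}$. Since $\Lambda \in C^1(\R)$ is convex, convex duality yields $\eta\in\R$ with $x = \Lambda'(\eta)$ and $I(x) = \eta x - \Lambda(\eta)$. Define tilted probability measures $\tilde\Pro_n$ by $d\tilde\Pro_n/d\Pro = e^{\eta S_n}/\E[e^{\eta S_n}]$. Under $\tilde\Pro_n$, the log-moment generating function of $S_n$ shifts to $\theta \mapsto \Lambda(\theta+\eta) - \Lambda(\eta)$, whose derivative at the origin equals $x$; since pointwise convergence of finite convex functions upgrades to uniform convergence on compact sets, a standard Chebyshev argument under $\tilde\Pro_n$ gives $\tilde\Pro_n(|S_n/n - x| \geq \delta) \to 0$ for every $\delta > 0$. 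Untilting on a ball $B(x,\delta) \subset G$,
$$\Pro(S_n/n \in B(x,\delta)) \geq e^{-n(\eta x + \delta|\eta|)}\, \E[e^{\eta S_n}]\, \tilde\Pro_n(S_n/n \in B(x,\delta));$$
taking $(1/n)\log$, passing to $\liminf$, and letting $\delta \to 0$ yields $\liminf(1/n)\log\Pro(S_n/n \in G) \geq -I(x)$, as required.

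The main obstacle is the lower bound: the hypothesis that $\Lambda$ is $C^1$ on all of $\R$ is used essentially to guarantee that every $x$ in the interior of $\{I<\infty\}$ is an exposed point of $I$, i.e., of the form $\Lambda'(\eta)$ for some $\eta$, so that the tilt with parameter $\eta$ produces a probability measure concentrating around $x$. At any $\eta_0$ where differentiability failed, an entire interval of slopes of $I$ would be absorbed into the subdifferential of $\Lambda$ at $\eta_0$ and be unrealized by any tilt; the lower bound could then genuinely break down, leaving only a weak LDP. The $C^1$ assumption on $\Lambda$ is precisely what prevents this pathology and promotes the weak LDP into the full LDP with convex rate function $I = \Lambda^*$.
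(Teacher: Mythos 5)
The paper does not prove this statement at all: it is quoted as the classical G\"artner--Ellis theorem with a pointer to \cite[Theorem~2.3.6]{dembo_zeit}, so there is no in-paper argument to compare against. Your proposal is a correct reconstruction of the standard proof in the one-dimensional, everywhere-finite, everywhere-differentiable setting, and it is essentially the argument of the cited reference: Chernoff/Chebyshev bounds optimized over $\theta$ give the half-line upper bounds, convexity of $I$ (with its unique zero at $x^*=\Lambda'(0)$) reduces a general closed set to the two half-lines anchored at the points of $F$ nearest to $x^*$, and the lower bound comes from exponential tilting at the parameter $\eta$ with $\Lambda'(\eta)=x$, concentration of $S_n/n$ around $x$ under the tilted measures, and untilting on a small ball. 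Two small remarks: in dimension one the half-line bounds already control the unbounded part of a closed set, so the exponential-tightness step is not actually needed (though it is harmless and correct, since $\Lambda$ is finite on all of $\R$); and for the lower bound one should also record the routine approximation step that $\inf_G I$ over an open $G$ can be computed over $G$ intersected with the interior of $\{I<\infty\}$ (move any boundary point of the domain slightly toward $x^*$ along the segment, using convexity of $I$ and openness of $G$), which your sketch implicitly uses when restricting to exposed points. Your closing discussion of why differentiability of $\Lambda$ is the essential hypothesis --- it makes every point of the interior of $\{I<\infty\}$ an exposed point realized as $\Lambda'(\eta)$ --- is accurate.
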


% % % % % % % % % % % % % % % % % % %
\subsection{Proof of Theorem~\ref{th1} in a simple special case}
\label{pf-ind-special}
% % % % % % % % % % % % % % % % % % %
We first give a proof of Theorem~\ref{th1} in a special case, the justification being two-fold: we believe that the proof helps the intuition of the reader, but we also point out that it goes through if we replace $\cos(2\pi \cdot)$ by any Lipschitz continuous function $f$ that  also satisfies  \eqref{Kac-conditions} (i.e., is $1$-periodic and centered).
We consider a sequence $(a_k)_{k\in\N}$ of positive integers such that
$a_1 = 1$, 
\[
m_k :=  \frac{a_{k+1}}{a_k}\in \{2,3,\ldots\}, k \in \N,  \qquad\text{and}\qquad \lim_{k\to\infty} \frac{a_{k+1}}{a_k}=+\infty. 
\]
The assumption $a_1 = 1$ is without loss of generality, but the  assumption  
that  $m_k, k \in \N,$ are integers will be essential for the following argument. 
By the G\"artner-Ellis theorem, it suffices to show that for all $\theta\in\R$, 
$$
\lim_{n \to \infty} \frac{1}{n} \log \E \left[ e^{\theta S_n} \right] = \widetilde \Lambda(\theta), 
$$
with $\widetilde{\Lambda}$ as defined in \eqref{def-tildeLambda}.  
To this end, we shall approximate each $S_n$ by a  random variable $T_n$
that is easier to deal with, in the sense that it 
can be written as  a sum of 
independent random variables expressed, as defined below, in terms of
certain conditional expectations.
First, recall that the Cantor series expansion $(\xi_k)_{k \in \N}$ of
  $\omega \in [0,1)$ associated with $(m_{k})_{k \in \N} \subset \{2,3, \ldots\}$ is given as follows:
    % recalling $m_1 = a_1$, 
  \[   \omega = \sum_{k=1}^\infty \frac{\xi_k(\omega)}{m_1 m_2 \cdots m_{k}}
  = \sum_{k=1}^\infty \frac{\xi_k(\omega)}{a_{k+1}}, 
  \]
  with $\xi_k(\omega)  \in \{0,1,\ldots, m_{k}-1\}$ for every $k \in \N$.
  This expansion was first introduced by Cantor in \cite{Can1869}, and
  the investigation of  its probabilistic properties
  appears to have been initiated by Erd\H{o}s,  R\'{e}nyi, and Turan \cite{erdos-renyi,renyi,Turan56}. 
  Our construction uses the following 
  key property established by R\'{e}nyi in \cite{renyi} (see also (7) of
  \cite{ErdosRenyi59}): 
  the image of Lebesgue measure on $[0,1)$, 
     under  the correspondence  $\omega \mapsto (\xi_k(\omega))_{k \in \N}$
    makes $(\xi_k)_{k \in \N}$ a sequence of independent integers with each 
    $\xi_k$ uniformly distributed on $\{0,1, \ldots, m_{k} - 1\}$.\footnote{This is simply a generalization of the possibly more familiar result going back to Borel  \cite{Borel1909}, where $m_k = r$ for all $k$ and the correspondence 
    between elements of $[0,1]$ and their $r$-ary expansions 
    maps Lebesgue measure on $[0,1]$ to the Bernoulli product measure
    on the space of $\{0,1, \ldots, r-1\}$-valued sequences, with uniform marginals (see also \cite[Section 2.3]{PL2020} for
     a  more detailed exposition).} 

Given this property,  now consider the filtration $\mathcal F_{2} \subset \mathcal F_{3} \subset\ldots$, where the $\sigma$-algebras are defined by
\[
\mathcal F_{k+1} := \sigma\bigg( J_{k+1,i}, i=0,\dots,a_{k+1}-1 \bigg),  \quad k \in \N, \]
where for $k \in \N$,
\[  J_{k+1,i}:= \left[ \frac{i}{a_{k+1}}, \frac{i+1}{a_{k+1}}\right), \quad  
i=0,\dots,a_{k+1}-1.
\]
We now define certain conditional expectations: 
\[
Y_k := \E\big[ X_k| \mathcal F_{k+1}\big] = \sum_{i=0}^{a_{k+1}-1}\E\Big[X_k\Big| J_{k+1,i}\Big]\mathbbm 1_{J_{k+1,i}}\,,  \quad k \in \N. 
\]
In particular, we see that by construction, for every $k\in\N$ and  $i=0,\dots,a_{k+1}-1$,
\[   Y_k = \sum_{i=0}^{a_{k+1}-1} c_{k+1,i}  \mathbbm 1_{J_{k+1,i}}\,,   \]
where  $c_{k+1,i}$, the constant representing the value of $Y_k$ on $J_{k+1,i}$, is given by 
\[
c_{k+1,i} :=  a_{k+1}  \int_{J_{k+1,i}} X_k(\omega) \, \lambda(d\omega). 
\]
Since the function $X_k$ is $(1/a_k)$-periodic, for any $k \in \N$,  $c_{k+1,i} = c_{k+1,i'}$ whenever $|i-i'|$ is a multiple of $m_k = a_{k+1}/a_k$. 
Hence, for each $k \in \N$, the random variable $Y_{k}$ is only a function of $\xi_k$.  Since the
$\{\xi_k\}_{k \in \N}$ are independent, 
  the random variables $\{Y_k\}_{k \in \N_0}$ are also independent.

We now show  that the approximation of $X_k$ by $Y_k$ is sufficiently good, more precisely, for each $k\in\N$ and  $i=0,\dots,a_{k+1}-1$,
using  the mean-value theorem and the fact that $X_k$ has Lipschitz constant $2\pi a_k$,  we have 
\begin{align*}
\max_{\omega \in J_{k+1,i}} |X_k(\omega) - Y_k(\omega)|
&= \max_{x\in J_{k+1,i}} \bigg|X_k(\omega) - a_{k+1}\int_{J_{k+1,i}} X_k(y)\,dy\bigg| \\
& = \max_{x\in J_{k+1,i}}\big|X_k(\omega) - X_k(\omega_0)\big| \\
&  \leq \max_{x\in J_{k+1,i}} 2\pi a_k \big|\omega-\omega_0\big| \\
&  \leq 2\pi \frac{a_k}{a_{k+1}},
\end{align*}
where $\omega_0 = \omega_{0,k,i} \in J_{k+1,i}$ is obtained from the mean value theorem.  Taking the maximum over all $i=0,\dots,a_{k+1}-1$ yields
\begin{equation}\label{eq:X_k_Y_k}
\| X_k -Y_k\|_\infty \leq  2\pi\, \frac{a_k}{a_{k+1}}.
\end{equation}
In particular, this means that if $S_n:= \sum_{k=1}^nX_k$ and $T_n:=\sum_{k=1}^n Y_k$, then
\[
\|S_n-T_n\|_\infty \leq 2\pi \sum_{k=1}^n \frac{a_k}{a_{k+1}} = o(n), \qquad n\to\infty,
\]
because by assumption $a_k/a_{k+1}\to 0$ as $k\to\infty$. 
For fixed $\theta\in\R$ we obtain
$$
\E \left[e^{\theta S_n}\right] = \E \left[e^{\theta T_n} e^{\theta (S_n-T_n)}\right] \leq e^{|\theta| \|S_n-T_n\|_\infty} \E \left[e^{\theta T_n} \right].
$$
We  also have the analogous lower bound  
$$
\E \left[e^{\theta S_n}\right]= \E \left[e^{\theta T_n} e^{\theta (S_n-T_n)}\right] \geq e^{-|\theta| \|S_n-T_n\|_\infty} \E \left[e^{\theta T_n} \right].
$$
Altogether, taking into account that $\|S_n-T_n\|_\infty = o(n)$, we obtain  
\begin{equation}\label{eq:S_n_T_n}
\E \left[e^{\theta S_n}\right] = e^{o(n)}  \E \left[e^{\theta T_n} \right],
\qquad n\to\infty.
\end{equation}
Since $T_n = Y_1+\ldots+Y_n$ is a sum of independent random variables, it follows that  
\begin{equation}\label{eq:frac1n_cum_gf_S_n}
\frac 1n \log \E \left[e^{\theta S_n}\right]
=
o(1) + \frac 1n \log \E \left[e^{\theta T_n} \right]
=
o(1) +\frac 1n \sum_{k=1}^n \log \E \left[e^{\theta Y_k} \right].
\end{equation}
Similarly, in view of~\eqref{eq:X_k_Y_k} and the fact that
 $(X_k)_{k \in \N}$ are identically distributed, we have 
$$
\log \E \left[e^{\theta Y_k} \right] =  o(1) + \log \E \left[e^{\theta X_k} \right] = o(1) + \log \E \left[e^{\theta X_1} \right],
\qquad k\to\infty.
$$
Inserting this into~\eqref{eq:frac1n_cum_gf_S_n} and recalling that the usual convergence implies convergence of arithmetic means to the same limit,
the fact that  $X_1$ and $\widetilde{X}_1$ are identically distributed and the definition \eqref{def-tildeLambda} of $\widetilde{\Lambda}$, we arrive at
$$
\frac 1n \log \E \left[e^{\theta S_n}\right]
=
o(1) + \log \E \left[e^{\theta X_1} \right]
=
o(1) + \log \E \left[e^{\theta \widetilde{X}_1} \right]
=
o(1) + \widetilde \Lambda(\theta),
$$
as desired.  Since the function $\widetilde{\Lambda}$ is differentiable by \eqref{arcsin_cumgf}, the G\"artner-Ellis theorem (reproduced
as Theorem \ref{th-GE} herein) can be applied and the proof of Theorem~\ref{th1} in the case when $a_{k+1}/a_k$ are positive integer numbers tending to infinity is complete.

\subsection{Proof of Theorem~\ref{th1} in full generality}
\label{pf-ind-general}

Fix $\theta \in \R$ and a sufficiently small $\ve\in (0,1)$. As in the simple case, we wish to apply the G\"artner-Ellis theorem, but this time the analysis is more delicate.
In contrast to the proof for the simple case, which relied on a reduction to the independent setting, our proof for the general case uses harmonic analysis methods and is more in the spirit of the classical works of Salem and Zygmund, Kac, and others mentioned in the Introduction.
Recall from \eqref{def-xk} and \eqref{def-sn} that
\begin{equation}
  \label{mgf-sn}
\E \left[ e^{\theta S_n} \right] = \int_0^1 e^{\theta \sum_{k=1}^n \cos (2 \pi a_k \omega)} d\omega = \int_0^1 \prod_{k=1}^n e^{\theta \cos (2 \pi a_k \omega)} d\omega. 
\end{equation}

We start with an elementary lemma on 
approximation of the exponential function by a Taylor polynomial of length $d$.
For $d \in \N$, define 
\begin{equation}
  \label{def-poly}
%  p(x):=
  p_d(x) := \sum_{m=0}^d \frac{x^m}{m!}, \qquad x\in\R.
\end{equation}

\begin{lemma}
  \label{lem-taylor}
  There exists $d:(0,1) \to \N$ with $d(\ve) \rightarrow \infty$ as
  $\ve \rightarrow 0$ such that
  % for every $\ve > 0$ there exists $d(\ve)$ such that
  the polynomial 
  $p := p_{d(\ve)}$ satisfies for every $k \in \N$, 
  \begin{equation} \label{eps_approx}
    1 - \ve \leq \frac{p(\theta \cos (2 \pi a_k \omega))}{e^{\theta \cos (2 \pi a_k \omega)}} \leq 1 + \ve, \qquad \omega \in [0,1].  
  \end{equation}
 \end{lemma}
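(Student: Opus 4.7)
The key observation is that the argument of the exponential is uniformly bounded: for every $k\in\N$ and every $\omega\in[0,1]$ we have $|\theta\cos(2\pi a_k\omega)|\leq|\theta|=:M$. Hence the statement reduces to a uniform approximation problem for $e^x$ on the compact interval $[-M,M]$: I want to find, for every $\ve\in(0,1)$, a Taylor polynomial $p_d$ of $e^x$ with
\[
\sup_{x\in[-M,M]}\Big|\frac{p_d(x)}{e^x}-1\Big|\leq\ve,
\]
and then verify that the required $d=d(\ve)$ can be chosen so that $d(\ve)\to\infty$ as $\ve\to 0$.

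The plan is to apply the standard Lagrange form of the remainder. For any $x\in[-M,M]$, Taylor's theorem gives
\[
|e^x-p_d(x)|\leq\frac{|x|^{d+1}}{(d+1)!}e^{|x|}\leq\frac{M^{d+1}}{(d+1)!}e^{M}.
\]
Dividing by $e^x$ and using $e^{-x}\leq e^{M}$ on $[-M,M]$, I get the uniform bound
\[
\sup_{x\in[-M,M]}\Big|\frac{p_d(x)}{e^x}-1\Big|\leq\frac{M^{d+1}\,e^{2M}}{(d+1)!}.
\]
Since $M$ is fixed (it depends only on $\theta$), the right-hand side tends to $0$ as $d\to\infty$.

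Therefore, for each $\ve\in(0,1)$, I define
\[
d(\ve):=\min\Big\{d\in\N:\frac{M^{d+1}e^{2M}}{(d+1)!}\leq\ve\Big\}.
\]
This minimum exists by the previous estimate, and $d(\ve)\to\infty$ as $\ve\to0$ because the tail of a convergent series must eventually be at most any prescribed $\ve$; more concretely, if $d(\ve)$ stayed bounded by some $d_0$ along a subsequence of $\ve\to0$, the corresponding tail bound would be a fixed positive number, contradicting $\ve\to0$. With this choice, setting $p=p_{d(\ve)}$ and specializing $x=\theta\cos(2\pi a_k\omega)\in[-M,M]$ yields \eqref{eps_approx} for every $k\in\N$ and every $\omega\in[0,1]$. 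There is no real obstacle here; the only thing to be careful about is to bound the ratio (not the difference), which is why the factor $e^{-x}\leq e^M$ appears and contributes the additional $e^M$ in the numerator.
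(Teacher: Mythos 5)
Your proof is correct and follows essentially the same route as the paper: both use the Lagrange form of the Taylor remainder, bound the argument by $M=|\theta|$, and choose $d(\ve)$ large enough that the resulting uniform bound (the paper gets $e^{M}M^{d+1}/(d+1)!$ via $e^{\max\{0,x\}-x}\le e^{|x|}$, you get the immaterially weaker $e^{2M}M^{d+1}/(d+1)!$) falls below $\ve$. The only cosmetic caveat is the degenerate case $\theta=0$, where your \emph{minimal} choice of $d(\ve)$ would be constant and not tend to infinity; there one simply forces $d(\ve)$ to grow (e.g.\ $d(\ve)\ge\lceil 1/\ve\rceil$), since the inequality holds trivially for every $d$.
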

\begin{proof}
  Fix $d \in \N$ and $p = p_d$.
  Then, by the classical theory of Taylor approximation, the remainder in the Lagrange form satisfies
$$
\left| e^x - p(x)\right| \leq \frac{e^{\max\{0,x\}}}{(d+1)!} |x|^{d+1},
$$
and hence, 
$$
1 - \frac{e^{\max\{0,x\}} |x|^{d+1}}{(d+1)! \,e^x} \leq \frac{p(x)}{e^x} \leq 1 + \frac{e^{\max\{0,x\}} |x|^{d+1}}{(d+1)! \,e^x}\,.
$$
Noting that in our situation we have $|\theta \cos (2 \pi a_k \omega)| \leq |\theta|$, this implies that for
every $k \in \N$, 
\begin{equation}\label{taylor_err}
1 - \frac{e^{|\theta|} |\theta|^{d+1}}{(d+1)!} \leq \frac{p(\theta \cos (2 \pi a_k \omega))}{e^{\theta \cos (2 \pi a_k \omega)}} \leq 1 + \frac{e^{|\theta|}|\theta|^{d+1}}{(d+1)!}.
\end{equation}
Assuming that  $d =d(\ve) \in\N$ is sufficiently large such that $\frac{e^{|\theta|}|\theta|^{d+1}}{(d+1)!} < \ve$, we have
$d(\ve)\to\infty$ as $\eps\to 0$, and  \eqref{eps_approx} follows. 
\end{proof}

As an immediate corollary we see that  for every $\ve > 0$, we have $d = d(\ve) \in \N$ such that for every $n \in \N$, $p = p_d$ satisfies 
\begin{equation}\label{ineq:integrals}
(1-\ve)^n \leq \frac{\int_0^1 \prod_{k=1}^n p \left(\theta \cos (2 \pi a_k \omega) \right) d\omega}{\int_0^1 \prod_{k=1}^n e^{\theta \cos (2 \pi a_k \omega)} d\omega} \leq (1+\ve)^n.
\end{equation}
Let $k_0=k_0(d)$ be a positive integer such that $a_{k+1}/a_k > 2d$ for all $k \geq k_0$; such an index must exist since we assumed that $a_{k+1}/a_k \to \infty$ as $k \to \infty$. For $n > k_0,$ 
we split
\begin{equation*} 
\prod_{k=1}^n p \left(\theta \cos (2 \pi a_k \omega) \right) = \left(\prod_{k=1}^{k_0} p \left(\theta \cos (2 \pi a_k \omega) \right)\right) \left(\prod_{k=k_0 + 1}^n p \left(\theta \cos (2 \pi a_k \omega) \right)\right), 
\end{equation*} 
and, taking into account that $p(\theta \cos (2 \pi a_k \omega))>0$ by~\eqref{eps_approx}, note that 
  \begin{equation}
    \label{prod_approx}
  m_0   \left(\prod_{k=k_0 + 1}^n p \left(\theta \cos (2 \pi a_k \omega) \right)\right)
  \leq \prod_{k=1}^n p \left(\theta \cos (2 \pi a_k \omega) \right) \leq 
  M_0 \left(\prod_{k=k_0 + 1}^n p \left(\theta \cos (2 \pi a_k \omega) \right)\right),
  \end{equation}
  where
  \[
  m_0 := \inf_{\omega \in [0,1]} \left( \prod_{k=1}^{k_0} p \left(\theta \cos (2 \pi a_k \omega) \right)\right)
  \quad \mbox{ and } \quad M_0 := \sup_{\omega \in [0,1]} \left(  \prod_{k=1}^{k_0} p \left(\theta \cos (2 \pi a_k \omega) \right) \right). 
  \]
We now state an important estimate on the integral of the common 
product that is on both  sides of the inequality
\eqref{prod_approx}.

\begin{lemma}
  \label{lem-estimate}
  Fix  $d \in \N$ and $p = p_d$ as in \eqref{def-poly}. Then for any
  $\theta, x \in \R$,
  \begin{equation}
    \label{exp_pk}
    p_d (\theta \cos x) = \sum_{j=0}^d b_j(\theta) \cos (jx),
  \end{equation}
  where the coefficients $b_j(\theta) = b_j(\theta;d), j = 0, 1, \ldots, d$, are  real numbers with 
   \begin{equation}
     \label{eqn:b_0_theta} 
      b_0 (\theta) =  b_0(\theta; d)  = \sum_{0 \leq m \leq \lfloor d/2 \rfloor} \frac{\theta^{2m}}{2^{2m} (m!)^2},
   \end{equation}
   and  for $j =1, \ldots,d$,  $b_j(\theta) \geq 0$ when $\theta> 0$.  
   % which is  bounded below by $1$.  
    Furthermore, given 
  % \red{even},
  $k_0 = k_0(d)   \in \N$ as above,  for every 
  $\theta \in \R$, and all $n > k_0$, 
   \begin{equation}
    \label{eq:int_prod_p}
    \int_0^1 \prod_{k=k_0 + 1}^n p \left(\theta \cos (2 \pi a_k \omega) \right) d\omega = b_0(\theta)^{n-k_0}. 
  \end{equation}
 \end{lemma}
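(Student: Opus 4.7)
My plan has two halves, corresponding to the two claims in the lemma.

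For the expansion \eqref{exp_pk}, I would start from $p_d(\theta\cos x)=\sum_{m=0}^{d}\frac{\theta^{m}}{m!}\cos^{m}x$ and linearize each power of cosine via the classical identity
\[
\cos^{m}x=\frac{1}{2^{m}}\sum_{\ell=0}^{m}\binom{m}{\ell}\cos((m-2\ell)x),
\]
combining the pair of terms $\ell$ and $m-\ell$ so that only nonnegative frequencies appear. Collecting contributions by frequency $j$ gives \eqref{exp_pk} with explicit coefficients $b_j(\theta)$ that are polynomials in $\theta$ with nonnegative coefficients; hence $b_j(\theta)\ge 0$ for all $j\ge 1$ whenever $\theta>0$. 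The constant term $b_0(\theta)$ receives contributions only from even $m=2s$ (odd powers of cosine have no constant Fourier coefficient), and the relevant contribution is $\frac{\theta^{2s}}{(2s)!}\cdot\frac{1}{2^{2s}}\binom{2s}{s}=\frac{\theta^{2s}}{2^{2s}(s!)^{2}}$, which after summing over $0\le s\le\lfloor d/2\rfloor$ yields exactly \eqref{eqn:b_0_theta}.

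For the product integral \eqref{eq:int_prod_p}, I would substitute the Fourier expansion \eqref{exp_pk} into each factor and multiply out, obtaining
\[
\prod_{k=k_{0}+1}^{n}p\bigl(\theta\cos(2\pi a_{k}\omega)\bigr)=\sum_{(j_{k_{0}+1},\ldots,j_{n})\in\{0,\ldots,d\}^{n-k_{0}}}\Biggl(\prod_{k=k_{0}+1}^{n}b_{j_{k}}(\theta)\Biggr)\prod_{k=k_{0}+1}^{n}\cos(2\pi j_{k}a_{k}\omega).
\]
Using the product-to-sum formula repeatedly, each product of cosines is a $2^{n-k_0-1}$-fold average of terms $\cos\bigl(2\pi(\varepsilon_{k_{0}+1}j_{k_{0}+1}a_{k_{0}+1}+\cdots+\varepsilon_{n}j_{n}a_{n})\omega\bigr)$ with signs $\varepsilon_{k}\in\{-1,+1\}$. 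Integrating term by term against $d\omega$ on $[0,1]$, only the contributions with vanishing total frequency survive.

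The decisive step is to show that the frequency $\varepsilon_{k_{0}+1}j_{k_{0}+1}a_{k_{0}+1}+\cdots+\varepsilon_{n}j_{n}a_{n}$ vanishes only when every $j_{k}=0$. This is where the choice $a_{k+1}/a_{k}>2d$ for $k\ge k_{0}$ is used: the geometric series estimate
\[
\sum_{k=k_{0}+1}^{n-1}j_{k}a_{k}\le d\sum_{k=k_{0}+1}^{n-1}a_{k}\le d\,a_{n}\sum_{\ell\ge 1}(2d)^{-\ell}=\frac{a_{n}}{2d-1}<a_{n}
\]
shows that if $j_{n}\ge 1$, the topmost term $j_{n}a_{n}\ge a_{n}$ strictly dominates every possible signed combination of the lower terms, so the sum cannot be zero. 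Hence $j_{n}=0$; iterating the same argument from the top down forces $j_{k_{0}+1}=\cdots=j_{n}=0$, and only the constant term $\prod_{k=k_{0}+1}^{n}b_{0}(\theta)=b_{0}(\theta)^{n-k_{0}}$ survives the integration, which is exactly \eqref{eq:int_prod_p}.

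The main (and really only) obstacle is bookkeeping: making sure the dominance estimate above is stated in a way that survives the iteration, since after eliminating the top index the remaining sum is of the same form with $n$ replaced by $n-1$, and one must check that the bound $a_{k+1}/a_{k}>2d$ for $k\ge k_{0}$ is precisely what is needed to keep the induction going at every stage. No further delicate analysis is required; the Fourier expansion and the gap estimate do all the work.
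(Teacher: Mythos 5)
Your proposal is correct and follows essentially the same route as the paper: linearize $p_d(\theta\cos x)$ via the power-reduction identities for $\cos^m x$ to read off $b_0(\theta)=\sum_{0\le s\le\lfloor d/2\rfloor}\theta^{2s}/(2^{2s}(s!)^2)$ and the nonnegativity of the $b_j$, then expand the product, apply the product-to-sum formula, and use the gap $a_{k+1}/a_k>2d$ to show the top nonzero frequency dominates all lower ones so that only the constant term survives integration (the paper isolates the largest index $k_1$ with $j_{k_1}\neq 0$ rather than iterating from the top down, but the estimate is the same). Only a trivial slip: $d\,a_n\sum_{\ell\ge 1}(2d)^{-\ell}=\frac{d\,a_n}{2d-1}$, not $\frac{a_n}{2d-1}$, which still gives the needed bound $<a_n$.
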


  We first show how Theorem \ref{th1} follows from Lemma \ref{lem-estimate}, and then provide the
  proof of the lemma. 
  Integrating each term in the inequalities  in \eqref{prod_approx} with respect to Lebesgue measure over
  the interval $[0,1]$,  and applying~\eqref{eq:int_prod_p}, we obtain
$$
m_0  b_0(\theta)^{n-k_0} \leq \int_0^1 \prod_{k=1}^n p \left(\theta \cos (2 \pi a_k \omega) \right) d\omega \leq M_0  b_0(\theta)^{n-k_0}.
$$
Combining these inequalities with \eqref{ineq:integrals} and \eqref{mgf-sn}, we arrive at
$$
\frac{1}{(1+\ve)^n} m_0\, (b_0(\theta))^{n-k_0} \leq \E \left[ e^{\theta  S_n} \right] \leq \frac{1}{(1-\ve)^n} M_0\, (b_0(\theta))^{n-k_0}.
$$
Taking the natural logarithm of each term, dividing by $n$ and 
letting $n\to\infty$, while keeping all other variables fixed, we obtain
$$
\log b_0 (\theta) - \log (1+\eps)
\leq
\liminf_{n \to \infty} \frac{1}{n} \log \E \left[ e^{\theta S_n} \right]
\leq
\limsup_{n \to \infty} \frac{1}{n} \log \E \left[ e^{\theta S_n} \right]
\leq
\log b_0(\theta)- \log (1-\ve).
$$

Recall that $\ve\in (0,1)$ was fixed but arbitrary, that $d=d(\ve)$ depends on $\ve$ and satisfies $d(\ve) \rightarrow \infty$ as $\ve \rightarrow 0$,
  and that $b_0(\theta) = b_0(\theta;d(\ve))$ depends on this choice of $d$. Note that $b_0(\theta)$ is a finite partial sum of the series expansion for the moment generating function of the arcsine distribution on the interval $(-1,1)$, which we derived in~\eqref{arcsin_mgf}.
Since the logarithm is a continuous function, \eqref{eqn:b_0_theta} and~\eqref{arcsin_cumgf} yield
$$
\lim_{\eps\to 0} \log b_0(\theta; d(\ve)) = \widetilde{\Lambda}(\theta).
$$
Thus, the last two displays together yield the limit 
$$
\lim_{n \to \infty} \frac{1}{n} \log \E \left[ e^{\theta S_n} \right] = \widetilde{\Lambda}(\theta),
$$
as desired. Since the function $\widetilde{\Lambda}$ is differentiable, the G\"artner-Ellis theorem can be applied, and the proof of Theorem \ref{th1} is complete, given Lemma \ref{lem-estimate}.

  To complete the proof of Theorem \ref{th1}, it only remains to establish Lemma \ref{lem-estimate}. 

  \begin{proof}[Proof of Lemma \ref{lem-estimate}] 
  For every fixed $k \in \N$, the function $\omega \mapsto$ $p \left(\theta \cos (2 \pi a_k \omega) \right)$ is a polynomial of degree $d$ in $\theta \cos (2 \pi a_k \omega)$. A standard trigonometric formula asserts that for even $m = 2n$,  $(\cos x)^m$ can be expressed as a linear combination of $1, \cos (2x), \cos (4x), \dots, \cos (mx)$, more precisely, for every $n\in\N$ and $x \in \R$, 
\begin{equation}\label{prod_cos_1}
(\cos x)^{2n} = \frac{1}{2^{2n}}{2n\choose n} + \frac{1}{2^{2n-1}}\sum_{\ell=0}^{n-1} {2n\choose \ell} \cos(2(n-\ell)x).
\end{equation}
For odd $m$, $(\cos x)^m$ can be expressed as a linear combination of $\cos x, \cos (3x), \ldots, \cos (mx)$, more precisely, for every $n\in\N$ and  $x \in \R$, 
\begin{equation}\label{prod_cos_2}
(\cos x)^{2n+1} = \frac{1}{4^{n}}\sum_{\ell=0}^{n} {2n+1\choose \ell} \cos((2n+1-2\ell)x).
\end{equation}
The precise statements of the last two formulas will not be important to us; we will only use the fact that the coefficient
of the constant term in the expansion of $(\cos x)^m$ is $\binom{m}{m/2} 2^{-m}$ when $m$ is even and zero otherwise. So for $d\in\N$ and  $\theta \in \R$, 
it is possible to write $p\left(\theta \cos x \right)$ for $x \in \R$, of the form
\[ p \left(\theta \cos x \right) = 
  b_0(\theta) + b_1(\theta)  \cos x + b_2 (\theta)\cos (2 x) + \dots + b_d(\theta)  \cos(d x),
\]
where the coefficients $b_k(\theta) = b_k(\theta;d), k=1, \ldots, d,$ depend on the coefficients of the polynomial $p$ (and thus on $d$) and on $\theta$, but not 
on $x$, and the zeroth coefficient $b_0(\theta)$ takes the explicit form  
\begin{eqnarray}\label{eq:b_0_theta}
b_0 (\theta)
%& = & \sum_{0 \leq m \leq d} \binom{m}{m/2} \frac{\theta^m}{2^{m} m!} \notag\\
& = & \sum_{\substack{0 \leq m \leq d,\\m \text{~even}}} \binom{m}{m/2} \frac{\theta^m}{2^{m} m!} \notag\\
& = & \sum_{0 \leq m \leq \lfloor d/2 \rfloor} \binom{2m}{m} \frac{\theta^{2m}}{2^{2m} (2m)!} \notag\\
& = & \sum_{0 \leq m \leq \lfloor d/2 \rfloor} \frac{(2m)!}{m! m!} \frac{\theta^{2m}}{2^{2m} (2m)!} \notag\\
& = & \sum_{0 \leq m \leq \lfloor d/2 \rfloor} \frac{\theta^{2m}}{2^{2m} (m!)^2}, 
\end{eqnarray}
which agrees with  \eqref{eqn:b_0_theta}.  This proves \eqref{exp_pk}.
  Further, when  $\theta  > 0$, since  the Taylor coefficients of the exponential function are all positive, and  
  the coefficients in the trigonometric identities \eqref{standard_id} and \eqref{prod_cos_1}
  are all non-negative, it  follows that  $b_j(\theta) \geq 0$ for  $j =1,\ldots,d$.

It  only remains to show  that when  integrating the product-form integrand  on the right-hand side of
  \eqref{eq:int_prod_p}  only  terms involving the zeroth coefficient remain. 
To this end, note that by \eqref{exp_pk} we have for $\omega \in [0,1]$, 
\begin{eqnarray*}
& & \prod_{k=k_0 + 1}^n p \left(\theta \cos (2 \pi a_k \omega) \right) \\
& = & \prod_{k=k_0 + 1}^n \Big(b_0 (\theta) + b_1 (\theta) \cos (2 \pi a_k \omega) + b_2 (\theta) \cos (2 \pi 2 a_k \omega) + \dots + b_d (\theta) \cos( 2 \pi d a_k \omega) \Big).
\end{eqnarray*}
When multiplying out this product, we obtain a constant term $b_0(\theta)^{n-k_0}$ as well as a sum of many mixed terms of the form
$$
b_0(\theta)^{n-k_0-\ell} \cdot  b_{j_1}(\theta) \cos \left( 2 \pi  j_1 a_{k_1} \omega \right) \cdot \ldots \cdot b_{j_\ell}(\theta) \cos \left(2 \pi j_\ell a_{k_\ell} \omega\right),
$$
for some $\ell \in \{1, \dots, n - k_0\}$,  $(j_1, \dots, j_\ell) \in \{1, \dots, d\}^\ell$, and  $(k_1, \dots, k_\ell) \in \{k_0+1, \dots, n\}^\ell$ such that $k_1 > \dots > k_\ell$. Thus, to prove 
  \eqref{eq:int_prod_p}, it suffices to show that 
 % since $a_{k+1}/a_k>2d$,
  for any such configuration, we have 
\begin{equation}
  \label{toshow1}
\int_0^1  \cos (2 \pi j_1 a_{k_1} \omega) \cdot \ldots \cdot \cos (2 \pi j_\ell a_{k_\ell} \omega) d\omega = 0.
\end{equation}
We now show that this follows because  $a_{k+1}/a_k>2d$ for $k \geq k_0$ (by the choice of $k_0$) Indeed, recall the standard trigonometric identity
\begin{equation}
  \label{standard_id}
  \cos x \cos y = \frac{1}{2} \Big( \cos(x-y) + \cos(x+y) \Big), 
\end{equation}
which implies that 
the product $\cos (2 \pi j_1 a_{k_1} \omega) \cdot \ldots \cdot \cos (2 \pi j_\ell a_{k_\ell} \omega)$ can be written as a linear combination of cosine functions $\cos (2\pi m \omega)$ with frequencies of the form
$$
m = j_1 a_{k_1} \pm \dots \pm j_\ell a_{k_\ell}.
$$
As already mentioned above, we have  $k_1>k_2>\ldots >k_\ell >k_0$. Then, taking into account that $j_1\geq 1$, we have 
$$
j_1 a_{k_1} \pm \dots \pm j_\ell a_{k_\ell} \geq a_{k_1} - d a_{k_2} - d a_{k_3} - \dots - d a_{k_\ell}.
$$
The inequality  $a_{k+1}/a_k > 2d$ for all $k \geq k_0$ then implies 
$$
a_{k_1} - d a_{k_2} - d a_{k_3} - \dots - d a_{k_\ell} \geq a_{k_1} \left( 1 - d \sum_{r=1}^\ell \frac{1}{(d+1)^r} \right) =
a_{k_1} \underbrace{ \left( 1 - d \left(\frac{1 - (d+1)^{-\ell}}{d} \right)\right)}_{>0} > 0. 
$$
Consequently,  the product $\cos (2 \pi j_1 a_{k_1} \omega) \cdot \ldots \cdot \cos (2 \pi j_\ell a_{k_\ell} \omega)$ can be written as a linear combination of cosine functions $\cos (2\pi m \omega)$ that have \emph{all non-zero} frequencies $m\in \N$.
This clearly implies \eqref{toshow1}, and thus completes the proof.
\end{proof}

\subsection{Proofs of Theorem \ref{th2} and Lemma \ref{lem:I_q_+1}}\label{subs-pf-theoremB}

Let $q \in \{2,3,\ldots\}$ be fixed,  let $a_k = q^k$ for each $k\in\N$, and let
  $S_n$ be as defined in \eqref{def-sn}. We establish the LDP by first recasting  $S_n/n$ 
  as a Birkhoff average (or time average) of a stationary sequence induced by the expanding piecewise continuous map
  $\mapT:[0,1]\to [0,1]$ given by
  \begin{equation}
    \label{def-mapT}
    \mapT \omega :=  q \omega \pmod 1 =q\omega - \lfloor q\omega \rfloor , \qquad \omega \in [0,1],
  \end{equation}
 (which is merely the fractional part of $q\omega$). 
Then,  \eqref{def-sn} and the identity $a_k = q^k$ show that the lacunary sums of interest can  be expressed as 
\begin{equation}
  \label{sn-Birkhoff}
  S_n(\omega) = \sum_{k=0}^{n-1} X_1(\mapT^{k}\omega) = X_1(\omega)+\ldots +X_n(\omega), \quad \omega \in [0,1]. 
  \end{equation}
  We can then  apply  tools
  from the theory of LDPs for  (uniform and non-uniform) hyperbolic dynamics and mixing processes;
  see for example~\cite{lopes,orey_pelikan,kifer,young,grigull,kesseboehmer,broise,collet,bryc,denker_kesseboehmer,denker_nicol}.
  Since in some references (see, e.g., ~\cite[p.~422]{cc}, \cite[Thm.~10.8 on p.~90]{broise}), an LDP is
  stated  only for 
some small neighborhood of $0$, and since parts of the argument will be needed to prove
property (iii) in the statement of Theorem \ref{th2}, we provide a sketch of the full proof in Section \ref{subs-pf-th2LDP}. 
The proofs of properties (i)--(iv), which are the main thrust of Theorem \ref{th2}, are presented
in Section \ref{subs-pf-th2props}. They  rely on additional estimates that are first obtained in Section \ref{subs-pf-th2estimates}.
Finally, the proof of Lemma \ref{lem:I_q_+1} is given in Section \ref{subs-pf-lemIq}.

% % % % % % % % % % % % % % % % % % % % % % % % % % % % % % % % % % % % % % %
\subsubsection{Proof of the LDP in Theorem \ref{th2}} \label{subs-pf-th2LDP}
% % % % % % % % % % % % % % % % % % % % % % % % % % % % % % % % % % % % % % %

By the G\"artner-Ellis Theorem, to prove the LDP it suffices to show that the limit $\Lambda_q (\theta) := \lim_{n \to \infty} \frac{1}{n} \log \E [ e^{\theta S_n}]$ exists for all $\theta\in\R$ and is differentiable in $\theta$.
We now express $e^{\theta S_n}$ in terms of  a certain linear operator.
 Let $\text{Lip}[0,1]$ denote the Banach space of Lipschitz functions 
  $f:[0,1]\to \mathbb C$,  endowed with the norm $\|f\| := \|f\|_\infty + L(f)$, where $L(f)$
 is the Lipschitz constant of $f$.
 Next, for $\theta\in\R$, consider the linear operator 
 $\Phi_{\theta, q}: \text{Lip}[0,1]\to \text{Lip}[0,1]$
 defined, for $g\in \text{Lip}[0,1]$, by 
\begin{equation}\label{PerFrobPerturb0}
(\Phi_{\theta, q} g) (\omega)
 :=\frac 1q \sum_{j=0}^{q-1} e^{\theta X_1\left(\frac{\omega + j}{q}\right)} g\left(\frac{\omega + j}{q}\right),
\qquad \omega \in [0,1], 
\end{equation}
where we recall from \eqref{def-xk} that   $X_1(\omega) = \cos (2\pi q \omega)$, 
$\omega\in[0,1]$.

The proof of the LDP  for $(S_n/n)_{n\in\N}$ stated in Theorem \ref{th2} is a direct consequence of the following proposition.  
    
    \begin{prop}
      \label{prop-mgf-PF}
      Fix $q \in \{2,3, \ldots\}$ and $\theta \in \R$. Then 
      \[  \Lambda_q(\theta) := \lim_{n \to \infty} \frac{1}{n} \log \E [ e^{\theta S_n}] = \log \lambda_{\theta,q}, \]
      where $\lambda_{\theta,q}$ is the Perron-Frobenius eigenvalue of the operator  $\Phi_{\theta, q}$ defined
      in \eqref{PerFrobPerturb0}.
      Moreover, there exists an open domain ${\mathcal D}$ of the complex plane that contains the real line $\mathbb{R}$  such that the  convergence above holds  uniformly for $\theta$ in any compact subset of ${\mathcal D}$.  
      In particular, $\theta \mapsto \Lambda_q(\theta)$ is differentiable. 
    \end{prop}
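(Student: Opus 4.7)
The plan is to realize the moment generating function $\mathbb{E}[e^{\theta S_n}]$ as the integral of the $n$-th iterate of the twisted transfer operator $\Phi_{\theta,q}$ applied to the constant function $\mathbf{1}$, then extract its asymptotic exponential growth rate from the spectral theory of $\Phi_{\theta,q}$ via the Ruelle thermodynamic formalism, with uniformity and differentiability supplied by Kato's analytic perturbation theory.

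First I would establish the representation
\[
\mathbb{E}[e^{\theta S_n}] \;=\; \int_0^1 \bigl(\Phi_{\theta,q}^n \mathbf{1}\bigr)(\omega)\, d\omega, \qquad n\in\mathbb{N},\ \theta\in\mathbb{C}.
\]
Noting that $\Phi_{\theta,q} g = L(e^{\theta X_1}\, g)$, where $(L g)(\omega) := \frac{1}{q}\sum_{j=0}^{q-1} g\bigl(\frac{\omega+j}{q}\bigr)$ is the Perron--Frobenius operator of the expanding map $\mathcal{T}\omega = q\omega \pmod 1$, the identity follows by induction on $n$ from the two duality relations $\int L h\, d\omega = \int h\, d\omega$ and $\int f\cdot Lh\, d\omega = \int (f\circ \mathcal{T})\cdot h\, d\omega$, combined with the Birkhoff representation \eqref{sn-Birkhoff} of $S_n$.

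Next, for real $\theta$, I would invoke the Ruelle--Perron--Frobenius theorem for $\Phi_{\theta,q}$ acting on $\mathrm{Lip}[0,1]$. Since $\mathcal{T}$ is piecewise linear and uniformly expanding with constant expansion rate $q$, and the potential $\omega\mapsto \theta X_1(\omega)$ is Lipschitz and real, a standard Lasota--Yorke inequality yields quasi-compactness of $\Phi_{\theta,q}$; positivity and topological mixing of $\mathcal{T}$ then force the spectral radius $\lambda_{\theta,q}>0$ to be a simple eigenvalue with a strictly positive Lipschitz eigenfunction, separated from the rest of the spectrum by a spectral gap. This yields a decomposition
\[
\Phi_{\theta,q}^n = \lambda_{\theta,q}^n\, P_\theta + N_\theta^n, \qquad \|N_\theta^n\| \le C\, (\gamma\,\lambda_{\theta,q})^n,
\]
for some $\gamma<1$, where $P_\theta$ is the rank-one spectral projection onto the eigenline. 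Integrating $\mathbf{1}$ and observing that $c_\theta := \int P_\theta \mathbf{1}\, d\omega > 0$, one arrives at $\mathbb{E}[e^{\theta S_n}] = \lambda_{\theta,q}^n (c_\theta + O(\gamma^n))$, whence $\frac{1}{n}\log \mathbb{E}[e^{\theta S_n}] \to \log \lambda_{\theta,q}$, identifying $\Lambda_q(\theta) = \log \lambda_{\theta,q}$.

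To upgrade this to uniform convergence on compact subsets of some open complex domain $\mathcal{D}\supset \mathbb{R}$ and to deduce differentiability of $\Lambda_q$, I would invoke Kato's analytic perturbation theory. The family $\theta\mapsto \Phi_{\theta,q}$ is an entire holomorphic family of bounded operators on $\mathrm{Lip}[0,1]$, because $\theta\mapsto e^{\theta X_1((\omega+j)/q)}$ is entire with locally uniform Lipschitz bounds. Simplicity and isolation of $\lambda_{\theta_0,q}$ at each real $\theta_0$ then imply, by Kato's theorem, that the eigenvalue persists as a holomorphic function $\theta\mapsto\lambda_{\theta,q}$ on some open complex neighborhood of $\theta_0$; the union of these neighborhoods over $\theta_0\in\mathbb{R}$ yields $\mathcal{D}$. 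On any compact $K\subset \mathcal{D}$ the spectral gap is uniform, so the remainder $O(\gamma^n)$ above holds uniformly in $\theta\in K$, giving the desired uniform convergence. Differentiability (in fact real-analyticity) of $\Lambda_q$ on $\mathbb{R}$ is then immediate. The principal technical obstacle lies precisely in this last step: establishing the Lasota--Yorke/Hennion-type quasi-compactness bound uniformly for complex $\theta\in K$, where positivity is lost and the essential spectral radius must be controlled purely from the uniform expansion of $\mathcal{T}$ together with bounds on $\|e^{\theta X_1}\|_{\mathrm{Lip}}$, after which continuity of the spectrum under holomorphic perturbation transfers the gap from real $\theta_0$ to nearby complex $\theta$.
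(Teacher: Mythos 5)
Your proposal is correct and follows essentially the same route as the paper: the identity $\E[e^{\theta S_n}] = \int_0^1 (\Phi_{\theta,q}^n \mathbf{1})\,d\omega$ via the transfer-operator duality, the spectral-gap decomposition $\Phi_{\theta,q}^n = \lambda_{\theta,q}^n P_\theta + N_\theta^n$ to extract the exponential growth rate, and Kato's analytic perturbation theory to obtain the complex domain $\mathcal D$, uniform convergence on compacts, and differentiability. The only difference is that you sketch the Lasota--Yorke/quasi-compactness input explicitly where the paper simply cites the thermodynamic-formalism literature.
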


In the language of thermodynamic formalism~\cite{zinsmeister}, $\log \lambda_\theta$ is referred to as the pressure or the free energy of a one-dimensional lattice system, and its differentiability  expresses the known fact that there are no phase transitions for such systems. (For more background on the  spectral gap property of  Perron-Frobenius transfer operators,  the reader is referred to ~\cite{baladi_book}, \cite{broise}, \cite{rychlik} \cite[Chapter~4]{zinsmeister}.)

\begin{proof}[Proof of Proposition \ref{prop-mgf-PF}.]
Recall the definition of the  map $\mapT:[0,1] \to [0,1]$ given in \eqref{def-mapT} and note that Lebesgue measure is an invariant measure for $\mapT$, i.e., $\mapT$ maps the  measure space $([0,1],\mathcal B([0,1]),\lambda)$ to itself and satisfies $\lambda({\mathcal T}^{-1}(A))  = \lambda(A)$ for every $A \in {\mathcal B}[0,1]$.  
Indeed, (for simplicity we only consider $q=2$)
  \[
  \mapT(x) = 2x\pmod1 = 2x - \lfloor 2x\rfloor =
 \begin{cases}
   2x &:\,0\leq x < \frac{1}{2} \\
    2x-1 &:\, \frac{1}{2}\leq x \leq 1
 \end{cases}
 \]
  and so, for every positive and measurable function $f:[0,1]\to \R$,
  \begin{align*}
 \int_{[0,1]}f(\mapT(x))\,\lambda(dx) & = \int_{[0,1/2)}f(2x)\,\lambda(dx) + \int_{[1/2,1]}f(2x-1)\,\lambda(dx) \cr
 & = \frac{1}{2}\int_{[0,1]}f(y)\,\lambda(dy) + \frac{1}{2}\int_{[0,1]}f(y)\,\lambda(dy) = \int_{[0,1]}f(y)\,\lambda(dy).
  \end{align*}
      In ergodic theory parlance, $\big(([0,1],\mathcal B([0,1]),\lambda);\mapT\big)$ is a measure-preserving
      dynamical system and we refer the reader to \cite{EFHN2015} for further details.  
      The Perron-Frobenius operator $\Phi_q: \text{Lip}[0,1]\to \text{Lip}[0,1]$ associated with $\mapT$ is defined by 
$$ 
(\Phi_q g) (\omega) = \frac 1q \sum_{j=0}^{q-1}  g\left(\frac{\omega + j}{q}\right),
\qquad \omega \in [0,1], 
g\in \text{Lip}[0,1],
$$
where recall $\text{Lip}[0,1]$ is the space of Lipschitz functions defined above.   Note that for any $g\in\text{Lip}[0,1]$ 
\[ \int_0^1 (\Phi_q g) (\omega) d \lambda (\omega) 
=  \frac 1q  \sum_{j=0}^{q-1} \int_0^1 g\left(\frac{\omega + j}{q} \right) d \lambda (\omega) 
% =  \frac 1q  \sum_{j=0}^{q-1} \int_0^1 g\left(\frac{\omega + j}{q} \right) d \lambda (\omega) =
= \int_0^1 g(\omega) d \lambda (\omega),   
\]
where the last equality uses the fact that $\mapT$  is $\lambda$-preserving and 
     $\{(\omega + j)/q,$  $j = 0, 1, \ldots, q-1\}$ is  the preimage of $\omega$ under $\mapT$. 
This shows that $\Phi_q$ preserves the integral for any function  $g\in\text{Lip}[0,1]$.
Next, for $\theta\in\R$, note that the operator $\Phi_{\theta, q}$
  defined in \eqref{PerFrobPerturb0} can be viewed
  as a perturbation  of  the operator $\Phi_q$   since for $g\in \text{Lip}[0,1]$, 
\begin{equation}\label{PerFrobPerturb}
(\Phi_{\theta, q} g) (\omega) = 
\left(\Phi_q [e^{\theta X_1} g]\right) (\omega)
=
\frac 1q \sum_{j=0}^{q-1} e^{\theta X_1\left(\frac{\omega + j}{q}\right)} g\left(\frac{\omega + j}{q}\right),
\qquad \omega \in [0,1], 
\end{equation}
where once again recall from \eqref{def-xk} that $X_1(\omega) = \cos (2\pi q \omega)$, 
$\omega\in[0,1]$.  It is immediate from the definition that both $\Phi_q$
  and $\Phi_{\theta, q}$ are linear operators. Denoting by $\Phi_q^n$ and $\Phi_{\theta,q}^n$ the
  $n$-fold composition of $\Phi_q$ and $\Phi_{\theta, q}$, respectively, a straightforward inductive argument  (see, e.g., 
 \cite[Proposition 5.1 (P4)]{broise}), shows that
 \begin{equation}
   \label{Phi-thetan}
\Phi_{\theta, q}^n g = \Phi_q^n [e^{\theta S_n} g], \quad \mbox{ for every } n\in\N. 
\end{equation}
Let  $\mathbf{1}$ denote the constant function on $[0,1]$ that takes the value $1$, and henceforth, denote $d\lambda(\omega)$ just as $d\omega$.  Then,
   the  moment generating function of $S_n$ can be expressed as
\begin{align}\label{eq:expectation integral perturbation operator}
\E\big[e^{\theta S_n}\big] = \int_{0}^1 e^{\theta S_n(\omega)} d \omega
=
\int_{0}^1 \Phi_q^n [e^{\theta S_n}](\omega) d \omega
=
\int_{0}^1 \left(\Phi_{\theta,q}^n \mathbf{1}\right) (\omega) d \omega, 
\end{align}
where the second equality uses the fact that
  $\Phi^n_q$ preserves the integral and the last equality uses
  \eqref{Phi-thetan} with $g = \mathbf{1}$.

We will now use 
the crucial fact is that the operator $\Phi_{\theta, q}$ has the spectral gap property; see, e.g., \cite[Theorems~4.1 and~4.23]{zinsmeister} and~\cite[Theorem~1.5]{baladi_book}, where all essential arguments can be found.
Namely, we use the well-known fact that for every $\theta\in\R$, 
$\Phi_{\theta, q}$ admits a decomposition 
\begin{equation}\label{eq:dec_Phi_theta}
\Phi_{\theta, q}  = \lambda_\theta Q_\theta + R_\theta,
\end{equation}
where
\begin{equation}
  \label{deponq}
 \lambda_\theta  = \lambda_{\theta,q} >0
\end{equation}
is a simple eigenvalue of $\Phi_{\theta, q}$, $Q_\theta = Q_{\theta,q}$ is a projection operator onto a line spanned by an eigenfunction $h_\theta = h_{\theta,q}>0$ associated with
$\lambda_\theta$, and $R_\theta = R_{\theta,q}$ is an operator whose spectral radius is strictly smaller than $\lambda_\theta$.  More precisely, for $f \in \text{Lip}[0,1]$, 
there is a probability measure $\mu_\theta = \mu_{\theta,q}$ on $[0,1]$ such that
$$
Q_\theta f = h_\theta \cdot \frac{\int_0^1 f(\omega) d\mu_\theta(\omega)}{\int_0^1 h_\theta (\omega) d\mu_\theta(\omega)}
\quad  
\text{ and }
\quad
R_\theta Q_\theta  =   Q_\theta R_\theta \equiv 0\, .
$$
Continuing to omit   the dependence of  the quantities  $\lambda_\theta, R_\theta, Q_\theta, h_\theta$ and $\mu_\theta$ on $q$,  
by raising the decomposition of $\Phi_{\theta, q}$ to the $n$-th power, it follows that for any
$f \in \text{Lip}[0,1]$,  
$$
\Phi_{\theta, q}^n f
=
\lambda_\theta^n Q_\theta^n f + R_\theta^n f
=
\lambda_\theta^n \cdot h_\theta \cdot \frac{\int_0^1 f (\omega)d\mu_\theta(\omega)}{\int_0^1 h_\theta(\omega) d\mu_\theta(\omega)}
+
R_\theta^n f.
$$
Now, setting $f = \mathbf{1}$,   
taking the integral on both sides, and using \eqref{eq:expectation integral perturbation operator},
one obtains 
\begin{align}\label{eq: expectation decomposition}
\E \big[e^{\theta S_n}\big]
& =
\int_{0}^1 \left(\Phi_{\theta,q}^n \mathbf{1} \right) (\omega) d \omega
=
\lambda_\theta^n \cdot \frac{\int_0^1 h_\theta(\omega) d\omega}{\int_0^1
  h_\theta(\omega) d\mu_\theta(\omega)}
+
\int_{0}^1 (R_\theta^n \mathbf{1}) (\omega) d\omega.
\end{align}
Recalling that the spectral radius of $R_\theta$ is strictly smaller than $\lambda_\theta$,
one obtains
% \red{for $n \to \infty$}  
\begin{equation}\label{eq:mod_phi}
\lim_{n\to\infty}
\frac {\E \big[e^{\theta S_n}\big]}{\lambda_\theta^n}
=
\frac{\int_0^1 h_\theta(\omega) d\omega}{\int_0^1
  h_\theta (\omega) d\mu_\theta(\omega)}.   
\end{equation}
Note in passing that this shows that the sequence $(S_n)_{n\in\N}$ satisfies some version of  mod-phi convergence~\cite{meliot_etal_book}, but what is more pertinent, it implies 
the weaker statement
\begin{equation}\label{eq:mod_phi_weak}
\lim_{n\to\infty} \frac 1n \log \E \big[e^{\theta S_n}\big] = \log \lambda_\theta, 
\end{equation}
which proves the first assertion of the proposition. 

We now turn to the proof of the remaining assertions, 
which we claim (and justify below) can be deduced from the perturbation theory of linear operators \cite[Chapter 7, $\S$3, p. 368]{Kato_book}, in particular the Kato-Rellich theorem, as stated in~\cite[Theorem~4.24]{zinsmeister}. Indeed, since the family of operators $\Phi_{\theta, q}$ depends on $\theta\in\mathbb C$ in an analytic way (see \cite[Proposition 5.1 (P3)]{broise} and \cite[Theorem 1.7, p.368]{Kato_book}), the decomposition~\eqref{eq:dec_Phi_theta} continues to hold in some neighborhood $\mathcal D$ of the real axis (with $\lambda_\theta$, $h_\theta$ and $\mu_\theta$ becoming complex-valued), with $\lambda_\theta\neq 0$
and $\lambda_\theta$ (as well as $h_\theta,\mu_\theta, R_\theta$) being analytic on $\mathcal D$. Moreover, $|\lambda_\theta|$ stays strictly smaller than the spectral radius of $R_\theta$ if $\mathcal D$ is sufficiently small, which, looking at \eqref{eq: expectation decomposition}, shows that convergence in \eqref{eq:mod_phi_weak} is uniform on compact subsets of $\mathcal D$.
\end{proof}

\subsubsection{Moment estimates for the partial sums $S_n$ and $\widetilde{S}_n$}
\label{subs-pf-th2estimates}

Let $n\in\N$ and consider
\begin{align}\label{eq: expression Sn}
\Lambda_{q,n}(\theta):=\frac{1}{n} \log \E\big[e^{\theta S_n}\big] = \frac{1}{n} \log \sum_{m=0}^\infty \frac{\theta^m}{m!}\E\big[S_n^m\big],
\end{align}
and
\begin{align}\label{eq: expression tilde Sn}
\widetilde{\Lambda}_n(\theta):=\frac{1}{n} \log \E\big[e^{\theta \widetilde{S}_n}\big] = \frac{1}{n} \log \sum_{m=0}^\infty \frac{\theta^m}{m!}\E\big[\widetilde{S}_n^m\big],
\end{align}
where we recall that $\widetilde{S}_n=\sum_{j=1}^n \widetilde{X}_j$, and 
$(\widetilde{X}_j)_{j \in \N}$ are i.i.d.~having the
same distribution as $X_1$, as defined in \eqref{def-wtildesn} and \eqref{def-wtildexk}, respectively.
The proof of properties (i)--(iv) in Theorem \ref{th2}, presented in the next section,
   involve  a comparison of the coefficients in the Taylor expansions of \eqref{eq: expression Sn} and \eqref{eq: expression tilde Sn} considered as functions of $\theta$, which  in turn relies on estimates 
  on  the moments of $S_n$ and $\widetilde{S}_n$, obtained in 
Lemmas \ref{lem:A_m_n_moment}--\ref{lem:arcsine_moments} below. 
  We start with  Lemma \ref{lem:A_m_n_moment} on estimates of the  moments of $S_n$. 

\begin{lemma}\label{lem:A_m_n_moment}
  Fix $q\in \{2,3,\ldots\}$,  let $a_k = q^k$ for all $k\in\N$, and let $S_n$ be as defined in \eqref{def-sn}.
  Then, for every $m, n \in\N$, we have
$$
\E[S_n^m] = \frac{A_m(n)}{2^m},
$$
where $A_m(n)$ is the number of solutions to the equation $\sum_{i=1}^m \eps_i q^{k_i} = 0$
in the unknowns $k_1,\ldots,k_m\in \{1,\ldots,n\}$ and $\eps_1,\ldots,\eps_m \in \{+1,-1\}$.
\end{lemma}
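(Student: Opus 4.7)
The plan is to expand $S_n^m$ as a multilinear sum of complex exponentials and then integrate term by term using the orthogonality of characters on $[0,1]$.

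First I would rewrite each cosine in exponential form, namely
\[
\cos(2\pi q^k \omega) = \frac{1}{2}\sum_{\epsilon \in \{+1,-1\}} e^{2\pi i \epsilon q^k \omega},
\]
so that
\[
S_n(\omega) = \frac{1}{2}\sum_{k=1}^n \sum_{\epsilon \in \{+1,-1\}} e^{2\pi i \epsilon q^k \omega}.
\]
Raising this to the $m$-th power and multiplying out yields
\[
S_n^m(\omega) = \frac{1}{2^m}\sum_{k_1,\ldots,k_m=1}^{n}\ \sum_{\epsilon_1,\ldots,\epsilon_m \in \{+1,-1\}} \exp\!\Bigl(2\pi i \Bigl(\sum_{j=1}^m \epsilon_j q^{k_j}\Bigr)\omega\Bigr).
\]

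Next I would integrate over $\omega \in [0,1]$ with respect to Lebesgue measure. Since $\int_0^1 e^{2\pi i N \omega}\,d\omega$ equals $1$ if $N=0$ and $0$ for any nonzero integer $N$, and since $\sum_{j=1}^m \epsilon_j q^{k_j}$ is always an integer, only those tuples $(k_1,\ldots,k_m,\epsilon_1,\ldots,\epsilon_m)$ with $\sum_{j=1}^m \epsilon_j q^{k_j}=0$ contribute. The number of such tuples is, by definition, $A_m(n)$. Therefore
\[
\E[S_n^m]=\int_0^1 S_n^m(\omega)\,d\omega = \frac{A_m(n)}{2^m},
\]
as claimed.

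There is really no serious obstacle in this argument, since it is a straightforward orthogonality calculation; the only minor point to flag is the interchange of the finite sum with the integral, which is automatic because everything in sight is a finite linear combination of bounded functions. The main content of the lemma is conceptual rather than technical: it reduces the computation of the integer moments of $S_n$ to a purely arithmetic counting problem about signed representations of $0$ by powers of $q$, which is the quantity studied via Proposition~\ref{prop:a_m_n} and then used to derive the Taylor coefficients of $\Lambda_q$ and $I_q$.
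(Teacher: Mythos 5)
Your proof is correct and follows essentially the same route as the paper: writing each cosine as $\tfrac12(e^{2\pi i q^k\omega}+e^{-2\pi i q^k\omega})$, expanding the $m$-th power multilinearly, and integrating term by term using $\int_0^1 e^{2\pi i N\omega}\,d\omega = \mathbbm 1_{\{N=0\}}$ to reduce the moment to the count $A_m(n)$. Nothing further is needed.
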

\begin{proof}
For every $m\in\N$, we have
\begin{align*}
\E\big[S_n^m\big] & = \int_0^1 \Big(\sum_{k=1}^n \cos(2\pi q^k \omega)\Big)^m\,d\omega \\
& = \int_0^1 \sum_{k_1,\dots,k_m=1}^n\, \prod_{\ell=1}^{m}\cos(2\pi q^{k_\ell}\omega)\,d\omega \\
& = \frac{1}{2^m}\int_0^1 \sum_{k_1,\dots,k_m=1}^n\, \prod_{\ell=1}^{m} \Big[e(q^{k_\ell}\omega)+e(-q^{k_\ell}\omega)\Big] \,d\omega,
\end{align*}
where we write $e(z) := e^{2\pi i z}$ for $z\in\mathbb R$ and used that $\cos z = (e^{iz}+e^{-iz})/2$. By rewriting the product in the last line of the last display in terms of an exponential and using the elementary identity $\int_0^1 e(k \omega) d \omega = 0$ for all integer $k\neq 0$, we see that
\begin{align}\label{moments s_n}
\E\big[S_n^m\big] = \frac{1}{2^m} \sum_{k_1,\dots,k_m=1}^n\,\sum_{\varepsilon_1,\dots,\varepsilon_m\in\{-1,1\}} \mathbbm 1_{\big\{\varepsilon_1q^{k_1}+\cdots+\varepsilon_mq^{k_m}=0 \big\}}.
\end{align}
To complete the proof of the lemma, observe that the right-hand side equals $A_m(n)/2^m$.
\end{proof}

Next, we give a combinatorial interpretation of $A_m(n)$ for $m\leq q$. 
Let $B_m(n)$ be  the number of simple random walk paths in $\mathbb Z^n$ of length $m$ that return to the origin, which is sometimes
also referred to as 
the number of bridges of length $m$ in $\mathbb Z^n$.

\begin{lemma}\label{lem:A_m_n_equals_B_m_n}
 For all  $n, m\in\N$, we have $A_m(n) \geq B_m(n)$ and, if $m \leq q$, then 
  $A_m(n) = B_m(n)$.  
\end{lemma}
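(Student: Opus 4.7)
The plan is to set up an explicit correspondence between the objects counted by $A_m(n)$ and $B_m(n)$, and then exploit the size hypothesis $m \le q$ via a carry-type argument in base $q$.

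First, I would fix the bookkeeping. Any tuple $(k_1,\ldots,k_m,\varepsilon_1,\ldots,\varepsilon_m)$ with $k_i\in\{1,\ldots,n\}$ and $\varepsilon_i\in\{\pm 1\}$ determines coefficients $c_j := \sum_{i\,:\,k_i=j} \varepsilon_i$ for $j=1,\ldots,n$. A simple random walk of length $m$ in $\mathbb{Z}^n$ is such a tuple read as the sequence of steps $\varepsilon_i e_{k_i}$, and it is a bridge precisely when $\sum_i \varepsilon_i e_{k_i}=0$, i.e.\ $c_j=0$ for every $j$. Clearly every bridge satisfies $\sum_i \varepsilon_i q^{k_i}=\sum_j c_j q^j=0$, so bridges form a subset of the solutions counted by $A_m(n)$. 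This gives the inequality $A_m(n)\ge B_m(n)$ for all $m,n\in\mathbb{N}$.

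For the converse when $m\le q$, I would argue by contradiction: assume $\sum_j c_j q^j=0$ but not all $c_j$ vanish, and let $j_0$ be the largest index with $c_{j_0}\ne 0$. Rearranging and using the triangle inequality,
\[
|c_{j_0}|\,q^{j_0} \;\le\; \sum_{j<j_0} |c_j|\,q^j \;\le\; q^{j_0-1}\sum_{j<j_0} |c_j|.
\]
The key size estimate is that $\sum_j |c_j|\le m$, since each index $i$ contributes $\pm 1$ to exactly one $c_j$. Combining this with $m\le q$ yields
\[
|c_{j_0}|\,q \;\le\; \sum_{j<j_0} |c_j| \;\le\; m-|c_{j_0}| \;\le\; q-|c_{j_0}|,
\]
so $|c_{j_0}|(q+1)\le q$, forcing $|c_{j_0}|<1$ and contradicting the integrality and non-vanishing of $c_{j_0}$. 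Hence every solution counted by $A_m(n)$ in fact has all $c_j=0$, i.e.\ is a bridge, which gives $A_m(n)\le B_m(n)$ and hence equality.

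I expect the main obstacle is simply verifying that the estimate $\sum_j |c_j|\le m$ interacts correctly with the geometric inequality; in particular, one must be careful that the bound $|c_{j_0}|\le (m-|c_{j_0}|)/q$ is genuinely strict enough to preclude $|c_{j_0}|=1$, which is exactly where the hypothesis $m\le q$ (rather than $m\le q+1$) enters. Everything else is a routine translation between lattice walks and signed sums.
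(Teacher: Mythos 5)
Your proposal is correct, and the key step is carried out by a genuinely different (though equally elementary) argument from the paper's. The paper works ``from the bottom up'': from $\sum_k H_k q^k = 0$ it divides by $q$ to conclude $q \mid H_1$, uses $|H_1|\le m\le q$ to force $H_1=0$ (with a separate case analysis when $m=q$, where $H_1=\pm q$ is only possible if all $m$ terms equal $\pm q^1$ with a common sign, which is then excluded directly), and iterates to kill $H_2,\ldots,H_n$ in turn. You instead work ``from the top down'': isolating the largest index $j_0$ with $c_{j_0}\neq 0$ and bounding
\[
|c_{j_0}|\,q \;\le\; \sum_{j<j_0}|c_j| \;\le\; m-|c_{j_0}| \;\le\; q-|c_{j_0}|,
\]
which gives $|c_{j_0}|\le q/(q+1)<1$ in one stroke. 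Your route buys a cleaner treatment of the boundary case $m=q$ (no separate subcase needed) and avoids the iteration, at the cost of nothing; the paper's divisibility argument has the advantage that essentially the same computation is reused later in the proof of property (iv), where the exceptional solutions for $m=q+1$ must be classified rather than merely shown not to exist. Your closing remark about where $m\le q$ versus $m\le q+1$ enters is also accurate: for $m=q+1$ the inequality degenerates to $|c_{j_0}|\le 1$, which is attained by the solutions $q^{k+1}-q^k-\cdots-q^k=0$. All the supporting claims ($\sum_j|c_j|\le m$, the identification of bridges with the solutions having all $c_j=0$, and the resulting inequality $A_m(n)\ge B_m(n)$) are verified correctly.
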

\begin{proof}
We start with the proof of the second statement. Let $m\leq q$. We first claim (and justify below) that 
\begin{equation}\label{eq:syst_0}
\sum_{\ell=1}^m \varepsilon_\ell q^{k_\ell} = 0, \qquad k_\ell\in\{1,\dots, n\},\, \varepsilon_\ell \in\{-1,1\}
\end{equation}
if and only if for every $k\in\{1,\dots,n\}$,
\begin{align}\label{eq:condition on H_k}
H_k = \sum_{\ell=1}^m \mathbbm 1_{\{k_\ell=k \}}\cdot \varepsilon_\ell = 0.
\end{align}
In other words, \eqref{eq:syst_0} can hold only if every term $+q^k$ is canceled by a term $-q^k$ at some other place.
One direction of the claim is immediate. We note that
\begin{equation}
  \label{identity}
\sum_{\ell=1}^m \varepsilon_\ell q^{k_\ell} = \sum_{k=1}^n \Big(\sum_{\ell=1}^m \mathbbm 1_{\{k_\ell = k \}}\cdot\varepsilon_\ell\Big)q^{k} = \sum_{k=1}^n H_k\,q^{k}
\end{equation}
and therefore if all $H_k$ vanish, then $\sum_{\ell=1}^m \varepsilon_\ell q^{k_\ell} = 0$. 
For the opposite direction, suppose $\sum_{\ell=1}^m \varepsilon_\ell q^{k_\ell} = 0$.  Then, due to the
identity in \eqref{identity}, 
\[
\sum_{k=1}^n H_k\,q^{k} = 0. 
\] 
We first show that this, along with the fact that $m \leq q$, implies $H_1 = 0$. 
First, dividing everything by $q\geq 2$, we obtain
\begin{align}\label{eq: equation divided by q}
\sum_{k=1}^n H_k\,q^{k-1} = 0, 
\end{align}
which clearly  implies divisibility of $H_1$ by $q$.
Now, if $m<q$, then $|H_1|\leq m < q$ by definition. Hence, $H_1=0$. If $m=q$, then either $H_1=0$ or $H_1=\pm q$ and the latter case only occurs if all $\varepsilon_1,\dots,\varepsilon_m$ are equal and $k_\ell=1$ for all $\ell\in\{1,\dots,m\}$. In this case, the condition $\sum_{\ell=1}^m \varepsilon_\ell q^{k_\ell} = 0$ is violated. Hence, for $m\leq q$, we have $H_1=0$. Now dividing \eqref{eq: equation divided by q} by $q$ and repeating the argument, it follows that $H_2 = \cdots = H_n=0$ as well.
This completes the proof of the claim of  equivalence between the conditions \eqref{eq:condition on H_k} and
\eqref{eq:syst_0}. 

Next, note that the conditions \eqref{eq:condition on H_k} on $H_k$ may be interpreted as follows: for given $\varepsilon_1,\dots,\varepsilon_m\in\{-1,1\}$ and $k_1,\dots,k_m\in\{1,\dots,n\}$, we consider the nearest neighbor path of length $m$ in $\mathbb Z^n$ whose $\ell^{\text{th}}$ step is equal to $\varepsilon_\ell \vec{e}_{k_\ell}$ for $\vec{e}_1,\dots,\vec{e}_n$
the standard vector basis in $\R^n$. Clearly, condition \eqref{eq:condition on H_k} is satisfied if and only if the path
   returns to its starting point. It follows that $A_m(n) = B_m(n)$, which proves the second assertion of the lemma.

\vskip 2mm
To prove the first assertion, note that if $m\in\N$ is arbitrary, then the solutions of~\eqref{eq:syst_0} can be divided into the trivial ones (i.e.,\ those for which $H_1=\ldots=H_n = 0$), and the non-trivial ones (such as $q^2 - q - \ldots - q = 0$ for $m=q+1$, where the term $-q$ appears $m$ times). Since the number of trivial solutions is $B_m(n)$, and (by definition)  
$A_m(n)$ is the total number of solutions, the claim $A_m(n) \geq B_m(n)$ follows.
\end{proof}

\vspace*{2mm}
Taken together, Lemmas~\ref{lem:A_m_n_moment} and~\ref{lem:A_m_n_equals_B_m_n} show that, for each $m\leq q$,
\[
\E\big[S_n^m\big] = \frac{B_{m}(n)}{2^m}.
\]
Let us turn to the computation of $\E[\widetilde{S}_n^m]$, where we shall prove that the analogous identity holds, this time \textit{for all} $m\in\N$.
\begin{lemma}\label{lem:arcsine_moments}
  Recall that $\widetilde S_n = \widetilde X_1+ \ldots +  \widetilde X_n$, where $\widetilde X_1,\widetilde X_2,\ldots$ are i.i.d.~random variables with the arcsine distribution on $(-1,1)$.  Then, for all $m, n\in \N,$  we have
\[
\E\big[\widetilde{S}_n^m\big] = \frac{B_{m}(n)}{2^m}.
\]
\end{lemma}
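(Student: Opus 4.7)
The plan is to mimic the moment computation in Lemma \ref{lem:A_m_n_moment}, but exploit the independence of the $\widetilde{X}_k$'s to show that only the ``trivial'' (bridge-type) solutions contribute, so that the answer is $B_m(n)/2^m$ regardless of $m$.

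Since each $\widetilde{X}_k$ has the arcsine distribution on $(-1,1)$, and scaling $U_k$ by the positive integer $a_k$ modulo $1$ again yields a uniform random variable on $[0,1]$, I may replace $\widetilde{X}_k = \cos(2\pi a_k U_k)$ by $\widetilde{X}_k = \cos(2\pi U_k)$ in distribution, where $(U_k)_{k\in\N}$ are i.i.d.\ uniform on $[0,1]$. Writing $\cos z = (e^{iz}+e^{-iz})/2$ and expanding, just as in the proof of Lemma \ref{lem:A_m_n_moment}, I obtain
\[
\E[\widetilde{S}_n^m] = \frac{1}{2^m}\sum_{k_1,\ldots,k_m=1}^n\;\sum_{\varepsilon_1,\ldots,\varepsilon_m\in\{-1,1\}} \E\Big[\prod_{\ell=1}^m e(\varepsilon_\ell U_{k_\ell})\Big],
\]
where $e(z) = e^{2\pi i z}$.

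The next step is to regroup the product by the distinct indices $k\in\{1,\ldots,n\}$. Setting
\[
H_k := \sum_{\ell=1}^m \mathbbm{1}_{\{k_\ell = k\}}\,\varepsilon_\ell,
\]
the independence of the $U_k$'s gives
\[
\E\Big[\prod_{\ell=1}^m e(\varepsilon_\ell U_{k_\ell})\Big] = \prod_{k=1}^n \E[e(H_k U_k)] = \prod_{k=1}^n \mathbbm{1}_{\{H_k = 0\}},
\]
since $\int_0^1 e(hu)\,du$ equals $1$ if $h=0$ and $0$ otherwise. Therefore
\[
\E[\widetilde{S}_n^m] = \frac{1}{2^m}\,\#\Big\{(k_1,\ldots,k_m,\varepsilon_1,\ldots,\varepsilon_m)\,:\, H_k = 0 \text{ for all } k=1,\ldots,n\Big\}.
\]

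Finally, I invoke the combinatorial interpretation already used in the proof of Lemma \ref{lem:A_m_n_equals_B_m_n}: the condition $H_k=0$ for every $k$ is exactly the condition that the nearest-neighbour path in $\Z^n$ whose $\ell^{\text{th}}$ step is $\varepsilon_\ell \vec{e}_{k_\ell}$ returns to its starting point. Hence the cardinality above equals $B_m(n)$, giving $\E[\widetilde{S}_n^m] = B_m(n)/2^m$. There is no real obstacle here; the contrast with Lemma \ref{lem:A_m_n_moment} is that independence of the $U_k$'s kills every non-trivial Diophantine solution of $\sum \varepsilon_\ell q^{k_\ell}=0$, leaving only the bridge solutions for every $m$, not just $m\le q$.
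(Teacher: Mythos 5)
Your proof is correct and is essentially identical to the paper's: both expand $\cos$ into exponentials, use the independence of the $U_k$'s to reduce $\E\big[e\big(\sum_k H_k U_k\big)\big]$ to the indicator $\mathbbm 1_{\{H_1=\cdots=H_n=0\}}$, and then identify the count of such configurations with the number of bridges $B_m(n)$. Your explicit remark that $a_k U_k \bmod 1$ is again uniform (so one may take $a_k=1$) is a small justification the paper leaves implicit, but the argument is the same.
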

\begin{proof}
Recalling that $(U_k)_{k\in\N}$ is a sequence of i.i.d.~random variables with the same uniform distribution as $U$, we can write
\[
\widetilde{X}_k =  \cos (2\pi U_k) = \frac{e(U_k)+e(-U_k)}{2}, 
\]
where we again write $e(z) := e^{2\pi i z}$.
For $m\in\N$, we have
\begin{align*}
\E\big[\widetilde{S}_n^m\big] & = \frac{1}{2^m}\E \bigg[\left(\sum_{k=1}^n \big[e(U_k)+e(-U_k)\big]\right)^m\bigg] \\
& = \frac{1}{2^m} \sum_{k_1,\dots,k_m=1}^n \,\sum_{\varepsilon_1,\dots,\varepsilon_m\in\{-1,1\}} \E\bigg[\prod_{\ell=1}^me(\varepsilon_\ell U_{k_\ell}) \bigg] \\
& = \frac{1}{2^m} \sum_{k_1,\dots,k_m=1}^n \,\sum_{\varepsilon_1,\dots,\varepsilon_m\in\{-1,1\}} \E\bigg[ e\Big(\sum_{\ell=1}^m \varepsilon_\ell U_{k_\ell}\Big)\bigg] \\
& = \frac{1}{2^m} \sum_{k_1,\dots,k_m=1}^n \,\sum_{\varepsilon_1,\dots,\varepsilon_m\in\{-1,1\}} \E\bigg[ e\Big(\sum_{\ell=1}^n H_\ell U_{\ell}\Big)\bigg],
\end{align*}
where for any fixed  $k_i \in \{1, \ldots, n\}$, 
$\varepsilon_i \in \{-1,1\}$,  $i = 1, \ldots, m$, we set 
\[ H_\ell := \sum_{i=1}^m \mathbbm 1_{\{k_i= \ell \}}\cdot \varepsilon_\ell, \quad \ell = 1, \ldots. n. \]
Since we have
\[  \E\bigg[ e\Big(\sum_{\ell=1}^n H_\ell U_{\ell}\Big)\bigg] = \prod_{\ell=1}^n \E\bigg[ e\Big(H_\ell U_{\ell}\Big)\bigg]
= \sum_{\varepsilon_1,\dots,\varepsilon_m\in\{-1,1\}} \mathbbm 1_{\{ H_1=\dots=H_n=0\}}. \]
Since $H_1=\dots=H_n=0$ if and  only if the associated nearest neighbor path of length $m$ in $\mathbb{Z}^n$, 
whose $\ell^{th}$ step is equal to $\varepsilon_i \vec{e}_{k_i}$, with $\vec{e}_1, \ldots, \vec{e}_n$ the standard basis in
$\mathbb{Z}^n$, returns to its starting point, we have shown that
\[ \E\big[\widetilde{S}_n^m\big] = \frac{1}{2^m} \sum_{k_1,\dots,k_m=1}^n \,\sum_{\varepsilon_1,\dots,\varepsilon_m\in\{-1,1\}}
 \mathbbm 1_{\{ H_1=\ldots=H_n=0\}} =  \frac{B_{m}(n)}{2^m},
\]
which completes the proof.
\end{proof}

\subsubsection{Proof of Properties (i)--(iv) of Theorem \ref{th2}.}
\label{subs-pf-th2props} 
We now complete the proof of Theorem \ref{th2}.
First, note that the function $\Lambda_q$, as a uniform limit of analytic functions, is itself analytic for all $\theta \in \mathbb{C}$,
$|\theta| < \ve_0$, for a sufficiently small $\ve_0 > 0$.\\

\noindent 
{\em Proof of (i).}  First, let us observe that the proof that $I_q \leq \widetilde{I}$ on the positive real axis is simple.
Indeed, Lemmas \ref{lem:A_m_n_moment}--\ref{lem:arcsine_moments} imply that for all $m, n \in \N$,
\[ \E[S_n^m] \geq \E[ \widetilde{S}_n^m].
\]
When combined with \eqref{eq: expression Sn} and \eqref{eq: expression tilde Sn},  
  it follows that for every $n \in \N$ and $\theta > 0$,
\[ \Lambda_{q,n} (\theta) \geq \widetilde{\Lambda}_n (\theta). \]
Passing to the limit as $n \rightarrow \infty$ on both sides, and noting that both limits exist and and are
equal to $\Lambda_q (\theta)$ and $\widetilde{\Lambda}_q(\theta)$, respectively, due to the proof in Section \ref{subs-pf-th2LDP}
and the independence of $(\widetilde{X}_k)_{k \in \N}$, we conclude that $\Lambda_q(\theta) \geq \widetilde{\Lambda}_q(\theta)$ for all $\theta > 0$.
    Passing to the Legendre-Fenchel transform we then obtain $I_q(x) \leq \widetilde{I}(x)$ for all $x > 0$.

    The proof of the  strict inequality $I_q (x) < \widetilde{I}(x)$ for $x > 0$ is more delicate.  Assume that $q \geq 2$ and
      $\theta > 0$ are fixed.  We choose  a large integer $d > q$; at the end of the  proof we will  let $d \rightarrow \infty$. 
      As in the proof of Theorem \ref{th1}, we approximate the exponential function by a Taylor polynomial $p = p_d$ of degree $d$, and by
      \eqref{taylor_err}, we have
    \[ 
     \E \left[e^{\theta S_n} \right] \geq \left( 1 + \frac{e^\theta \theta^{d+1}}{(d+1)!} \right)^{-n}\int_0^1 \prod_{k=1}^n
      p (\theta \cos (2 \pi q^k \omega)) d \omega.
    \]
  We recall from Lemma \ref{lem-estimate} that we can write $p(\theta \cos (2 \pi q^k \omega))$ in the form
\begin{equation} \label{coeff}
  b_0 (\theta) + b_1 (\theta) \cos (2 \pi q^k \omega) + b_2 (\theta) \cos (2 \pi 2 q^k \omega) + \dots + b_d (\theta)
  \cos( 2 \pi d q^k \omega),
\end{equation}  
where $b_0 = b_0 (\theta;d)$ is given by \eqref{eq:b_0_theta} and $b_j  = b_j(\theta;d) \geq 0$ for $j = 1, \ldots, d$.
Since $d >q$  by assumption, the $q$-th term in the Taylor expansion for $p(\theta \cos (2 \pi q^k x))$ is $(\theta \cos (2 \pi q^k x))^q / q!$. From \eqref{prod_cos_1} and \eqref{prod_cos_2} we see that the expansion of $(\cos y)^q$ into a linear combination of cosine functions contains the term $2^{-q+1} \cos (qy)$. We emphasize again that all coefficients, in the Taylor expansion of $e^y$ as well as in \eqref{prod_cos_1} and \eqref{prod_cos_2}, are non-negative. Thus the coefficient $b_q(\theta)$ in \eqref{coeff} is at least as large as the contribution coming from $(\theta \cos (2 \pi q^k x))^q / q!$, and so we have 
\begin{eqnarray}
  \label{bq-est}
b_q = b_q (\theta) & \geq & \frac{\theta^q}{q!} \frac{1}{2^{q-1}}.
\end{eqnarray}
By a similar reasoning the coefficient $b_1(\theta)$ in \eqref{coeff} is at least as large as the contribution coming from the linear term in the Taylor expansion, which is simply  $\theta \cos (2 \pi q^k x)$. Thus we have $b_1 (\theta) \geq \theta$. Once again using the fact that all coefficients are non-negative, in \eqref{coeff} as well as in \eqref{standard_id}, \eqref{prod_cos_1} and \eqref{prod_cos_2}, we have
$$
\int_0^1 \prod_{k=1}^n p(\theta \cos (2 \pi q^k \omega))~d\omega \geq \int_0^1 \prod_{k=1}^n \left(b_0 + b_1 \cos (2 \pi q^k \omega) + b_q \cos (2 \pi q^{k+1} \omega) \right) d\omega.
$$
Now the point is that there will always be interference between the term $b_q \cos (2 \pi q^{k+1} x)$ coming from index $k$, and the term $b_1 \cos (2 \pi q^{k+1} x)$ coming from index $k+1$. Let us assume that $n$ is even. Always combining two consecutive factors together, we have
\begin{eqnarray*}
& & \int_0^1 \prod_{k=1}^n \left(b_0 + b_1 \cos (2 \pi q^k \omega) + b_q \cos (2 \pi q^{k+1} \omega) \right) d\omega \\
& = & \int_0^1 \prod_{\ell=1}^{n/2} \left(b_0 + b_1 \cos (2 \pi q^{2\ell-1} \omega) + b_q \cos (2 \pi q^{2 \ell} \omega) \right) \left(b_0 + b_1 \cos (2 \pi q^{2 \ell} \omega) + b_q \cos (2 \pi q^{2 \ell+1} \omega) \right) d\omega \\
& \geq & \int_0^1 \prod_{\ell=1}^{n/2} \left( b_0^2 + b_q \cos (2 \pi q^{2\ell} \omega) b_1 \cos (2 \pi q^{2\ell} \omega) \right) d\omega \\
& \geq & \prod_{\ell=1}^{n/2} \left( b_0^2 + \frac{b_1 b_q}{2} \right) \\
& \geq & \prod_{\ell=1}^{n/2} \left(b_0^2 + \frac{\theta^{q+1}}{q! 2^q} \right).
\end{eqnarray*}
where the last inequality uses \eqref{bq-est} and $b_1(\theta) > \theta$. 
Consequently, we have
$$
\Lambda_q(\theta) =  \lim_{n \rightarrow  \infty} \frac{1}{n}  \log\E[ e^{\theta S_n}]  \geq \log \left( \underbrace{\left(1 + \frac{e^\theta \theta^{d+1}}{(d+1)!}\right)^{-1}}_{\to 1 \text{ as } d \to \infty} \sqrt{b_0^2 + \frac{\theta^{q+1}}{q! 2^q}} \right).
$$
Recall that $b_0$ depends on $d$ and $\theta$, and that we have $\log b_0 \to \widetilde{\Lambda} (\theta)$ as $d \to \infty$.
For every fixed  $\theta > 0$  since the logarithm
is a strictly  increasing  function, the term $\log \left( \sqrt{b_0^2 + \frac{\theta^{q+1}}{q! 2^q}} \right)$ converges to a quantity  that is  is strictly larger than $\widetilde{\Lambda} (\theta)$ as $d \to \infty$. Consequently, we have
$$
\Lambda_q(\theta) > \widetilde{\Lambda}(\theta),  \quad \mbox{ for all  }   \theta > 0. 
$$
From the properties of the Bessel function $B_0(\theta)$ it is easily seen that for $x \in (0,1)$ the supremum in the definition of $\widetilde I(x)$ is actually a maximum, and is attained at some (finite) value $\theta_x>0$. Consequently, we have
$$
I_q (x) = \sup_{\theta > 0} \left[\theta x  - \Lambda_q (\theta)\right]  =  \theta_x x  - \Lambda_q (\theta_x) < \theta_x x  - \widetilde{\Lambda} (\theta_x) = \widetilde{I} (x). 
$$
Thus, we have $I_q(x) < \widetilde{I}(x)$ for all $x \in (0,1)$.\\

In conclusion, we note that we can make the difference between $\Lambda_q$ and $\widetilde{\Lambda}$ quantitative. Recall that $\theta>0$ by assumption. Since $b_0(\theta)$ is a partial sum of $B_0(\theta)$, we have $b_0(\theta) \leq B_0(\theta)$. Furthermore, from the series expansion for $B_0(\theta)$ it is easily seen that $B_0(\theta) \leq e^\theta$. Thus $b_0^2 (\theta) \leq e^{2 \theta}$, and $b_0^2 + \frac{\theta^{q+1}}{q! 2^ q} \geq b_0^2 \left(1 +\frac{\theta^{q+1}}{q! 2^q e^{2 \theta}} \right)$. Thus, letting $d \to \infty$, we deduce that 
$$
\Lambda_q (\theta) - \widetilde{\Lambda} (\theta) \geq \frac{1}{2} \log \left(1 +\frac{\theta^{q+1}}{q! 2^q e^{2 \theta}} \right).
$$

\vskip 2mm
\noindent 
{\em Proof of (ii)}:  It follows from Proposition \ref{prop-mgf-PF} that for $\theta \in \mathbb{R}$, 
 $\Lambda_q(\theta) = \log \lambda_{\theta,q}$, where $\lambda_{\theta,q}>0$ is the largest eigenvalue of the Perron-Frobenius transfer operator
  defined in \eqref{PerFrobPerturb0}.
  Fixing $\theta \in \R$ and sending $q\to\infty$, the Riemann sums converge on the right-hand side of the
  definition in \eqref{PerFrobPerturb0} converge to the corresponding Riemann integrals; hence this sequence of operators converges in the norm topology to the operator
$$
(\widetilde\Phi_{\theta} g) (\omega)
=
\int_0^1 e^{\theta X_1(z)} g(z) dz
=
\widetilde \lambda_\theta \frac{\int_0^1 e^{\theta X_1(z)} g(z) dz}{\int_0^1 e^{\theta X_1(z)} dz} \cdot \mathbf{1},
$$
where $\widetilde \lambda_\theta := \int_0^1 e^{\theta X_1(z)} dz = e^{\widetilde \Lambda(\theta)}$, where
$\widetilde \Lambda$ is defined as in \eqref{def-tildeLambda}. Thus, $\widetilde\Phi_{\theta}/\widetilde \lambda_\theta$ is  a projection onto the line spanned by the function $\mathbf{1}$. The Perron-Frobenius eigenvalue of $\widetilde \Phi_{\theta, q}$ is $\widetilde \lambda_\theta$. Now, if $\theta\in\R$ stays constant and $q\to\infty$, we can view $\Phi_{\theta,q}$ as a perturbation of $\widetilde \Phi_\theta$.
By perturbation theory (see, e.g., \cite{Kato_book}),
we have the convergence of the Perron-Frobenius eigenvalues, that is,  $\lim_{q\to\infty}\lambda_{\theta,q} = \widetilde \lambda_\theta$ for every $\theta\in\R$. Taking the logarithm, we get $\lim_{q\to\infty} \Lambda_q(\theta) = \widetilde \Lambda(\theta)$.
Since the involved functions are convex, the convergence is, in fact, uniform on compact intervals. By taking the Legendre-Fenchel transform, it follows that $\lim_{q\to\infty} I_q(x) =  \widetilde I(x)$ locally uniformly on $(-1,1)$. \\

\noindent 
{\em Proof of (iii)}.
Lemma~\ref{lem:arcsine_moments} shows that for every $n \in \N$, whenever $m \in \N$ satisfies $m\leq q$, one has 
\[
\E[S_n^m] = \E[\widetilde{S}_n^m] = \frac{B_{m}(n)}{2^m}, 
\] 
or, in other words,  the moments of $S_n$ and $\widetilde{S}_n$ coincide for all $m\leq q$. 
  Since cumulants of order less than or equal to $q$ can be expressed in terms of
 moments of order less than or equal to $q$, we infer that as long as $m\leq q$, 
\[
 \kappa_m(S_n) = \kappa_m(\widetilde{S}_n) := n \cdot \kappa_m(\widetilde{X}_1), 
 \]
where $\kappa_m (Y)$  denotes the $m$th cumulant of a real-valued random variable $Y$.  
Hence, for $m\leq q$ and every $n\in\N$, we have 
% the $m^{\text{th}}$ derivative satisfies
\[
\left( \frac{d}{d\theta} \right)^m\Lambda_{q,n}(\theta)\Bigg|_{\theta=0} = \frac{1}{n}\kappa_m(S_n) =  \kappa_m(\widetilde{X}_1) =  \frac{1}{n} \kappa_m(\widetilde{S}_n) = \left( \frac{d}{d\theta} \right)^m\widetilde{\Lambda}(\theta)\Bigg|_{\theta=0} \,.
\] 
Now because the uniform convergence of the analytic functions $\Lambda_{q,n}\to\Lambda_q$ (established in Proposition \ref{prop-mgf-PF}) implies the convergence of the derivatives, we obtain (iii). \\

\noindent 
{\em Proof of (iv)}: In the case when $m=q+1$, a slight modification of the argument used to prove Lemma~\ref{lem:A_m_n_equals_B_m_n} shows that any solution to~\eqref{eq:syst_0} either satisfies $H_1= \ldots = H_n = 0$, or is a permutation of one of the solutions $q^k + \ldots +q^k - q^{k+1} = 0$ or $q^{k+1}-q^k -\ldots - q^k = 0$, where $k\in \{1,\ldots, n-1\}$. The total number of such exceptional solutions is $2 (q+1)(n-1)$, hence
$$
A_{q+1}(n) = B_{q+1}(n) + 2(q+1)(n-1).
$$
From Lemma~\ref{lem:A_m_n_moment} and Lemma~\ref{lem:arcsine_moments} it follows that
$$
\E[S_n^{q+1}]
=
\frac{A_{q+1}(n)}{2^{q+1}}
=
\frac{B_{q+1}(n) + 2(q+1)(n-1)}{2^{q+1}}
=
\E[\widetilde{S}_n^{q+1}]  + \frac{(q+1)(n-1)}{2^{q}}.
$$
The cumulant $\kappa_{q+1}(S_n)$ can be expressed as $\E [S_n^{q+1}]$ plus some polynomial function of the lower moments $\E [S_n^m]$ with $m\leq q$. A similar representation holds for the cumulant $\kappa_{q+1}(\widetilde S_n)$, and the moments of all orders $m\leq q$ of $S_n$ coincide with those of $\widetilde S_n$ by part~(ii) of Theorem~\ref{th2}. It follows that
$$
\kappa_{q+1}(S_n) = \kappa_{q+1}(\widetilde{S}_n)  + \frac{(q+1)(n-1)}{2^{q}}.
$$
For the derivatives of order $q+1$ of $\Lambda_{q,n}$ and $\widetilde \Lambda$ at $\theta=0$ we therefore obtain
$$
\Lambda_{q,n}^{(q+1)}(0)
=
\frac{1}{n}\kappa_{q+1}(S_n)
=
\frac{1}{n} \kappa_{q+1}(\widetilde{S}_n)  + \frac{(q+1)(n-1)}{2^{q}n}
=
\widetilde{\Lambda}^{(q+1)}(0)  + \frac{(q+1)(n-1)}{2^{q}n}.
$$
Letting $n\to\infty$ and using that the uniform convergence of analytic functions $\Lambda_{q,n} \to \Lambda_q$ implies convergence of their derivatives, we arrive at
$$
\Lambda_q^{(q+1)}(0)
=
\widetilde{\Lambda}^{(q+1)}(0)  + \frac{q+1}{2^{q}}.
$$
This proves (iv).

\subsubsection{Proof of Lemma~\ref{lem:I_q_+1}}
\label{subs-pf-lemIq}

We now present the proof of Lemma~\ref{lem:I_q_+1}.  
The idea is that in the lacunary sum $S_n$ all cosine functions $\cos (2\pi q^k U)$ are equal to $1$ at $U = 0$. Thus, $S_n$ is close to $n$ if the uniform random variable $U \sim {\rm Unif}(0,1)$ takes a value that is sufficiently close to $0$. To make this precise, fix $\eps \in (0,1)$. We have $\cos x \geq 1- x^2/2$. It follows that
$$
X_k = \cos (2 \pi q^k U) \geq 1-\eps
\quad
\text{provided that}
\quad
U \leq \frac{\sqrt{2\eps}}{2\pi} q^{-k}.
$$
Hence, if $U \leq \frac{\sqrt{2\eps}}{2\pi} q^{-n}$, then we have $S_n \geq  (1-\eps)n$. It follows that
\begin{align*}
I_q(1-\eps)  = -\lim_{n\to\infty} \frac 1n \log \mathbb P(S_n \geq  (1-\eps)n)
&\leq
-\lim_{n\to\infty} \frac 1n \log \mathbb P\Big(U\leq \frac{\sqrt{2\varepsilon}}{2\pi}q^{-n}\Big)
\cr &
\leq
-\lim_{n\to\infty} \frac 1n \log \left(\frac{\sqrt{2\eps}}{2\pi} q^{-n}\right)
\cr &  =
 \log q.
\end{align*}
Since 
this holds for every $\eps \in (0,1)$, by the lower semicontinuity of $I_q$, it follows that 
\[
I_q(+1) \leq \liminf_{\varepsilon\to 0}I_q(1-\varepsilon) \leq \log q.
\]
This completes the proof. 
% \end{proof}

\subsection{Proof of Theorem \ref{th3}}
\label{subs-pf-theoremC}

We know from Theorem~\ref{th2} and Proposition~\ref{prop:I_2_I_3} that there exists some sufficiently small
$\bar{x}_0>0$ such that $0<I_2(x_0) < I_3(x_0)$ for every $x_0\in\R$ with $0< x_0\leq \bar{x}_0$. By interleaving the powers of $2$ and $3$ appropriately, we shall construct an Hadamard gap sequence $(a_k)_{k\in\N}$ such that for all
$x_0 \in (0,\bar{x}_0)$, the corresponding partial sums $(S_n)_{n \in \N}$ satisfy 
\[
0< \liminf_{n\to\infty} -\frac{1}{n}\log \Pro(S_n \geq n x_0) < \limsup_{n\to\infty} - \frac{1}{n}\log \Pro(S_n \geq n x_0) <\infty.
\]
Since both $I_2$ and $I_3$ are continuous, there exist $\varepsilon_0\in(0,x_0)$ and $\delta_0>0$ such that
\[
\sup_{|x-x_0|\leq \varepsilon_0} I_2(x) + \delta_0 < \inf_{|x-x_0|\leq \varepsilon_0}I_3(x) \qquad\text{and}\qquad \inf_{|x-x_0|\leq \varepsilon_0}I_2(x)>\delta_0.
\]
Our construction proceeds inductively. Assume that for some $n\in\N$ we have constructed increasing positive integers $a_1,\dots,a_n$ such that
\[
-\frac{1}{n}\log \Pro(S_n \geq n x_0) > \inf_{|x-x_0|\leq \varepsilon_0} I_3(x) -\frac{\delta_0}{3} =: c_+ .
\]
We want to extend the sequence $a_1,\dots,a_n$ to a longer sequence $a_1,\dots,a_N$, with $N\in\N$, $N>n$ in such a way that
\[
-\frac{1}{N}\log \Pro(S_N \geq N x_0) < \sup_{|x-x_0|\leq \varepsilon_0} I_2(x) + \frac{\delta_0}{3} =: c_-.
\]
Note that $0<c_-<c_+$. This is done as follows. We define $a_{n+1} := 2^m$, where $m\in\N$ is any number such that $2^m>2a_n$ (to guarantee the Hadamard gap condition) and $m>n+1$. Further, we define $a_{n+\ell}:=2^{m+(\ell-1)}$ so that with $N=n+(N-n)$, we have $a_{N}:= 2^{N-n+m-1}$. We choose $N\in\N$ sufficiently large, in particular such that $2m/N < \varepsilon_0/5$. Clearly,
\[
\left|\sum_{k=1}^n \cos(2\pi a_k x_0) \right| \leq n
\qquad
\text{and}
\qquad
\left|\sum_{k=1}^{m-1} \cos(2\pi 2^k x_0) \right| \leq m.
\]
Therefore, by replacing the first $n$ elements $a_1,\dots,a_n$ by $m-1$ powers of $2$, more precisely, by $2,2^2,\dots, 2^{m-1}$, respectively, and using the specific choice of $a_{n+1},a_{n+2},\dots$ together with the two estimates in the previous display (which guarantee that the replacement of $n$ by $m-1$ cosine terms yields an error bounded above by $n+m$), we obtain
\begin{align*}
-\frac{1}{N}\log \Pro(S_N \geq N x_0)
&
\leq
-\frac{1}{N}\log \Pro\bigg(\sum_{k=1}^{N-n+m-1}\cos(2\pi 2^k x_0) \geq N x_0 + (n+m)\bigg)\\
&
\leq
-\frac{1}{N}\log \Pro\bigg(\sum_{k=1}^{N}\cos(2\pi 2^k x_0) \geq N x_0 + (n+m) + (m-n-1)\bigg) \\
&\leq
-\frac{1}{N}\log \Pro\bigg(\sum_{k=1}^{N}\cos(2\pi 2^k x_0) \geq N x_0 + 2m\bigg)\\
& \leq -\frac{1}{N}\log \Pro\bigg(\sum_{k=1}^N\cos(2\pi 2^k x_0) \geq N (x_0 +\varepsilon_0/5)\bigg),
\end{align*}
where we used that $2m/N < \varepsilon_0/5$. The latter expression converges, as $N\to\infty$, to $I_2(x_0+\varepsilon_0/5)$. Hence, making $N\in\N$ larger, if necessary,  we obtain
\[
-\frac{1}{N}\log \Pro(S_N \geq N x_0)   < I_2(x_0+\varepsilon_0/5) +\frac{\delta_0}{3} \leq \sup_{|x-x_0|\leq \varepsilon_0} I_2(x) + \frac{\delta_0}{3} = c_{-}.
\]
Now we can continue this argument back and forth, by adding strings of consecutive powers of $2$ in odd steps and strings of  powers of $3$ in even steps, we can construct an infinite sequence $(a_k)_{k\in\N}$ for which $-\frac{1}{n}\log \Pro(S_n \geq n x_0)$ is infinitely often smaller than $c_-$ and infinitely often larger than $c_+$.

\subsection{Proof of Theorem \ref{th4}}\label{subs-pf-TheoremD}
Recall that the  i.i.d.~sequence  $Y = (Y_k)_{k \in\N}$ is defined on a common probability space
  $(\yspace, \yfilt,\py)$ 
    with each $Y_k$  uniformly distributed on the discrete set
  \begin{equation}
    \label{def-discset_recalled}
  \discset_k := \left\{h 2^{\lceil k^{2/3} \rceil}: h \in \mathbb{Z}, 0 \leq h \leq 2^{\lceil k^{2/3} \rceil} \right\}, \quad   k \in \N. 
  \end{equation} 
Since by definition $a_k^Y = 2^k + Y_k$, $(a_k^Y)_{k\in \N}$ is also a sequence
of independent random variables defined on  $(\yspace, \yfilt, \py)$. 
  We also assume
  (without loss of generality) that the independent uniform random variable
$U$ is realized as the identity map on the space
$([0,1], \mathcal{B}(0,1), \lambda)$ and, since $U$ and $Y$ are independent, that both
  $Y$ and $U$ are defined on the product measure space $(\yspace \times [0,1], \yfilt \otimes \mathcal{B}(0,1), \py \otimes \lambda)$. Throughout the argument, fix $\theta \in \R$.  The proof proceeds in several steps. \\

\noindent 
{\em Step 1.  Construct a suitable partition of the integers.}   

For any large  $n$,  we split the set of all positive integers into disjoint sets $\Delta_1, \Delta_2, \dots$ and $\Delta_1', \Delta_2', \dots$, which are defined via the
  following recursive construction. First, set $\Delta_1 := \{1, \dots, n^{1/2}\}$, where for notational simplicity,
we assume that $n^{1/2}$ is an integer.  Let the set $\Delta_1'$ contain the next $n^{2/5}$ smallest positive integers not already contained in $\Delta_1$, where (again for notational simplicity) we assume that $n^{2/5}$  is also an integer. Then, for each $i\in\N$, we recursively define $\Delta_{i+1}$ to  contain the $n^{1/2}$ smallest positive integers not already contained in $\bigcup_{j=1}^{i} (\Delta_j \cup \Delta'_j)$, and the set   $\Delta'_{i+1}$ to contain
  the $n^{2/5}$ smallest positive integers not already contained in $\Delta_{i+1} \cup \bigcup_{j=1}^{i} (\Delta_j \cup \Delta'_j)$.
This decomposition can be characterized by the following requirements: 
\begin{itemize}
 \item $\Delta_i < \Delta_i' < \Delta_{i+1}$ for all $i\in\N$, where the inequality is understood to hold element-wise
 \item $\bigcup_{i=1}^\infty \left( \Delta_i \cup \Delta_{i}' \right) = \mathbb{N}$.
 \item $\# \Delta_i = n^{1/2}$ for all $i\in\N$, and $\# \Delta_i' = n^{2/5}$ for all $i\in\N$.
\end{itemize}
The philosophy is that the primed index sets are sufficiently large to cause a strong ``independence'' between the trigonometric functions in the non-primed sets, while at the same time the total cardinality of the primed index sets is so small that they are asymptotically negligible. The precise choice of $n^{1/2}$ and $n^{2/5}$ for the cardinalities of the $\Delta$ and $\Delta'$ blocks is somewhat arbitrary, the relevant facts are that the one type of block is significantly larger than the other, and that both types of blocks are not too small in comparison with $n$.\\
For $i \in \N$, let $\delta_{i,\min}$ and $\delta_{i,\max}$ denote the smallest and largest integers in $\Delta_i$, respectively. Then our construction ensures that 
\begin{equation} \label{sep_0} 
  \delta_{i,\max} + n^{2/5} <  \delta_{i+1,\min},  ~\forall i \in \N, \quad \mbox{ and }  \quad \max_{1 \leq i \leq M_n} \delta_{i,\max} + n^{2/5} \leq n,   
\end{equation}
where 
\begin{equation}
  \label{def-integn}
  \integ_n := \min \left\{ \integ \in \mathbb{N}:  \{1, \dots, n\} \subset \bigcup_{i=1}^{\integ+1} \left( \Delta_i \cup \Delta_{i}' \right)\right\}
  \leq \sqrt{n}, 
\end{equation}
with the last inequality being a simple consequence of the fact that $|\Delta_i| = \sqrt{n}$ for each $i \in \N$. \\

\noindent 
{\em Step 2. Bound  the moment generating function in terms of polynomial integrals. } 

Recall that $\discsetinf = \otimes_{k\in\N} \discset_k$, where the definition of the discrete set 
  ${\mathcal D}_k$ was repeated again in \eqref{def-discset_recalled}, and for $y \in\discsetinf$, 
   $a_k^y = 2^k + y_k$. Recall also that $S_n^y(\omega) = \sum_{k=1}^n \cos (2 \pi a_k^y \omega)$, $\omega\in[0,1]$.

\begin{lemma}
  \label{lem-thus1}
  Fix $n\in\N$ sufficiently large such that $n^{2/5} \leq \integ_n$.   Then, for $y \in \discsetinf$ and
  $\omega \in [0,1]$,
    \begin{equation} \label{thus_1}
      e^{-5 \theta n^{9/10}} \int_0^1 \term^y_n(\omega) d\omega \leq \int_0^1 e^{\theta S_n^y(\omega)} d\omega 
      % \sum_{k=1}^n \cos (2 \pi a_k \omega)} d\omega
      \leq e^{5 \theta n^{9/10}} \int_0^1 \term^y_n(\omega) d\omega,
    \end{equation}
    where for $\omega \in [0,1]$
    \begin{equation}
      \label{poly}
      \term_n^y(\omega) := \left(\prod_{n^{2/5} \leq i \leq \integ_n} ~\prod_{k \in \Delta_i} e^{\theta \cos (2 \pi a_k^y \omega)} \right).
    \end{equation}
    Consequently, for any $\ve > 0$, there exists $d = d(\ve) \in \N$ such that
    the Taylor polynomial $p = p_{d(\ve)}$ of length $d(\ve)$
    defined in \eqref{def-poly} satisfies 
\begin{equation} \label{thus_2}
(1-\ve)^n \int_0^1 \term^y_n(\omega) d\omega \leq \displaystyle \int_0^1 \prod_{i=n^{2/5}}^{\integ_n} \prod_{k \in \Delta_i} p(\theta \cos (2 \pi a_k^y \omega)) ~d\omega \leq (1+\ve)^n \int_0^1 \term^y_n(\omega) d\omega,
\end{equation}
for every $y \in \discsetinf$ and for all sufficiently large $n\in\N$.
 \end{lemma}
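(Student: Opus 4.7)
\medskip

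\noindent\textbf{Proof plan.} The key observation is that $\term_n^y$ differs from the full product $e^{\theta S_n^y} = \prod_{k=1}^n e^{\theta \cos(2\pi a_k^y \omega)}$ only by a pointwise multiplicative factor coming from the ``missing'' indices, namely those $k \in \{1,\ldots,n\}$ that do not lie in $\bigcup_{n^{2/5} \le i \le \integ_n} \Delta_i$. My plan is to first bound the number of such missing indices using the partition constructed in Step 1, then pass from a pointwise to an integral bound, and finally plug in the Taylor approximation from Lemma~\ref{lem-taylor}.

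\emph{Step A (counting missing indices).} I would split the missing indices into three groups: (a) those contained in $\Delta_i$ with $i < n^{2/5}$, whose total number is at most $n^{2/5}\cdot n^{1/2}=n^{9/10}$; (b) those contained in the ``primed'' blocks $\Delta_j'$ with $j\le \integ_n$, whose cardinality is at most $\integ_n \cdot n^{2/5}\le n^{1/2}\cdot n^{2/5}=n^{9/10}$ by \eqref{def-integn}; and (c) any leftover integers in $\{1,\ldots,n\}$ beyond $\Delta_{\integ_n}\cup \Delta_{\integ_n}'$, which by the definition of $\integ_n$ and \eqref{sep_0} number at most $|\Delta_{\integ_n+1}|+|\Delta_{\integ_n+1}'| \le n^{1/2}+n^{2/5}$. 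Summing these, the total number of missing indices is at most $5n^{9/10}$ for $n$ large.

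\emph{Step B (pointwise comparison and first inequality).} Since $|\cos(2\pi a_k^y\omega)|\le 1$, each missing factor $e^{\theta \cos(2\pi a_k^y\omega)}$ lies in $[e^{-|\theta|},e^{|\theta|}]$. Multiplying over the missing indices and using Step A gives, pointwise in $\omega$,
\[
e^{-5|\theta|n^{9/10}}\,\term_n^y(\omega) \;\le\; e^{\theta S_n^y(\omega)} \;\le\; e^{5|\theta|n^{9/10}}\,\term_n^y(\omega),
\]
and integrating over $[0,1]$ yields \eqref{thus_1} (interpreting $\theta$ in the stated bound as $|\theta|$, or restricting to $\theta>0$ which is all that is needed later in the proof of Theorem~\ref{th4}).

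\emph{Step C (Taylor approximation and second inequality).} For the second inequality, I would invoke Lemma~\ref{lem-taylor} applied to $\theta$: given $\ve>0$, choose $d=d(\ve)\in\N$ such that $p:=p_{d(\ve)}$ satisfies
\[
1-\ve \;\le\; \frac{p(\theta\cos(2\pi a_k^y\omega))}{e^{\theta\cos(2\pi a_k^y\omega)}} \;\le\; 1+\ve
\]
uniformly in $k$ and $\omega$. The number of factors appearing in $\term_n^y$ is at most $\integ_n\cdot n^{1/2}\le n$, so taking the product over $k\in\Delta_i$, $n^{2/5}\le i\le \integ_n$, and dividing by $\term_n^y(\omega)$ sandwiches the ratio pointwise between $(1-\ve)^n$ and $(1+\ve)^n$. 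Integrating in $\omega$ gives \eqref{thus_2}.

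The only mildly delicate point is the bookkeeping in Step~A; once the $O(n^{9/10})$ bound on missing indices is in hand, the rest is a direct pointwise estimate combined with the already established Taylor-polynomial lemma. No new analytic input is required beyond what has already been proved for Theorem~\ref{th1}.
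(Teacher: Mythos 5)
Your proposal is correct and follows essentially the same route as the paper: count the indices omitted from $\term_n^y$ (the small-index blocks, the primed blocks, and the tail beyond $\Delta_{\integ_n}\cup\Delta_{\integ_n}'$), bound each omitted factor pointwise by $e^{\pm|\theta|}$ to get the $e^{\pm 5|\theta| n^{9/10}}$ comparison, and then apply the pointwise Taylor-ratio bound of Lemma~\ref{lem-taylor} to the at most $\integ_n\sqrt{n}\le n$ remaining factors. Your parenthetical about reading $\theta$ as $|\theta|$ in \eqref{thus_1} is a fair and correct observation (the paper implicitly makes the same convention).
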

  \begin{proof}
    Fix  $n\in\N$ as in the statement of the lemma. Also, fix
      $y \in \discsetinf$ and for notational conciseness, omit all dependence on $y$. Then
     for  any  $\omega \in [0,1]$, we can split 
\begin{eqnarray*}
  e^{\theta S_n(\omega)}  &=&  \prod_{k=1}^n e^{\theta \cos (2 \pi a_k \omega)} \\
& = & \underbrace{\left(\prod_{1 \leq i < n^{2/5}} ~\prod_{k \in \Delta_i \cup \Delta_i'} e^{\theta \cos (2 \pi a_k \omega)} \right)}_{=:\term^{(1)}(\omega)} \times \underbrace{\left(\prod_{n^{2/5} \leq i \leq \integ_n} ~\prod_{k \in \Delta_i} e^{\theta \cos (2 \pi a_k \omega)} \right)}_{=:\term^{(2)}(\omega)} \times \\
& & \qquad \times \underbrace{\left(\prod_{n^{2/5} \leq i \leq \integ_n} ~\prod_{k \in \Delta_i'} e^{\theta \cos (2 \pi a_k \omega)} \right)}_{=:\term^{(3)}(\omega)} \times \underbrace{\left( \prod_{\substack{1 \leq k \leq n,\\ k \not\in \bigcup_{i=1}^{\integ_n} (\Delta_i \cup \Delta_i')}} e^{\theta \cos (2 \pi a_k \omega)} \right)}_{=:\term^{(4)}(\omega)}.
\end{eqnarray*}
We will show below that  $\term^{(1)}, \term^{(3)}$, and $\term^{(4)}$ are
all sub-exponential in $n$ (that is, their logarithms are all sublinear in $n$),
and thus  these three factors will be negligible in comparison with $\term^{(2)}$,
whose  logarithm  is linear in $n$.  Indeed, first note that 
by construction $\term^{(1)}$ is a product of at most $2 n^{2/5} n^{1/2}$ factors, each of which is trivially between $e^{-\theta}$ and $e^{\theta}$, so in total we have $e^{-2\theta n^{9/10}} \leq \term^{(1)}(\omega) \leq e^{2\theta n^{9/10}}$ for all $\omega \in[0,1]$. Next, the product $\term^{(3)}$ contains all contributions coming from the complete short ``primed'' blocks $\Delta'$; the purpose of these blocks was just to separate the longer blocks, and $\term^{(3)}$  is also small in comparison with
$\term^{(2)}$. Since the product $\term^{(3)}$ has a total of at most $\integ_n n^{2/5} \leq n^{9/10}$ many factors,  we have $e^{-\theta n^{9/10}} \leq \term^{(3)}(\omega) \leq e^{\theta n^{9/10}}$ for all $\omega \in[0,1]$.  Lastly, the product $\term^{(4)}$ is split off since it does not cover a full block; this is no problem, since $\term^{(4)}$ only has a small number of factors. More precisely, since
by \eqref{def-integn}, $n - M_n \leq M_{n+1} - M_n \leq 2 \sqrt{n}$, 
we have $e^{-2\theta n^{1/2}} \leq \term^{(4)}(\omega) \leq e^{2\theta n^{1/2}}$ for all $\omega \in[0,1]$. 
Overall, this implies  $e^{-5 \theta n^{9/10}} \leq \term^{(1)}(\omega) \term^{(3)}(\omega) \term^{(4)}(\omega) \leq e^{5 \theta n^{9/10}}$ for all $\omega \in[0,1]$.
 When combined with  the last display, and the observation that everything inside the integrals is positive, this 
yields
\eqref{thus_1} with $\term_n := \term^{(2)}$,  which agrees
with the expression in \eqref{poly}. 
The second estimate \eqref{thus_2} is then
a simple consequence of \eqref{thus_1}, \eqref{eps_approx} of Lemma \ref{lem-taylor} 
and the relations 
$|\Delta_i| = \sqrt{n}$ for all $i$ and   $\integ_n \leq \sqrt{n}$.  \end{proof}

  \noindent
      {\em Step 3. Evaluate the integral  $\int_0^1 \prod_{i=n^{2/5}}^{\integ_n} \prod_{k \in \Delta_i} p(\theta \cos (2 \pi a_k^y \omega)) ~d\omega$ from \eqref{thus_2}.}
      The key idea  is to first show that we can take the product $\prod_{i=n^{2/5}}^{\integ_n}$ outside the integral;
see \eqref{indep_prod} below.  In other words, we show that there are no correlations between cosine functions with indices from different blocks $\Delta_i$ and $\Delta_j$, for $i, j, i \neq j$, in the range, and thus, that 
      it  is possible to evaluate all integrals entirely within each block. 
 Indeed, this was the purpose of the construction of $\Delta_i$ and $\Delta'_i$ in Step 1.  Then we simplify each of the 
 integrals in the product using the expansion for the polynomial
 $p$ obtained in Lemma \ref{lem-estimate}. 
      % As in the proof of Theorem A,
   Indeed, recall from \eqref{exp_pk} and \eqref{eqn:b_0_theta} of 
   that lemma  that for $d \in \N$,   there exist nonnegative 
   coefficients $b_j = b_j(\theta;d), j = 0, \ldots, d$, such that for
      all $k \in \N$, the Taylor polynomial $p = p_d$ satisfies 
  %     we can write $p \left(\theta \cos (2 \pi a_k \omega) \right)$ in the form
\begin{equation} \label{b_form}
  p \left(\theta \cos (2 \pi a_k^y \omega) \right) =
  \sum_{j=0}^d b_j (\theta)  \cos (2 \pi j a_k^y \omega), 
\end{equation}
where the zeroth coefficient $b_0 = b_0(\theta; d)$ is given explicitly by the finite series   in \eqref{eqn:b_0_theta}. 
To shorten notations we suppress the dependence of $b_0,~b_1,~ \dots$ on $\theta$  and $d$ in the formulas below.\\

      \begin{prop}
        \label{prop-interchange}
        Fix $d \in \N$ and $p = p_d$ the Taylor polynomial of length $d$. 
        Then, for all sufficiently large $n$, and every  $y \in \discsetinf$,
\begin{equation} \label{indep_prod}
  \int_0^1 \prod_{i=n^{2/5}}^{\integ_n} \prod_{k \in \Delta_i} p(\theta \cos (2 \pi a_k^y \omega)) ~d\omega = \prod_{i=n^{2/5}}^{\integ_n} \int_0^1 \prod_{k \in \Delta_i} p(\theta \cos (2 \pi a_k^y \omega)) d\omega. 
\end{equation}
Furthermore, for $i \in \{ n^{2/5}, \ldots, \integ_n\}$,   
\begin{align} 
  \label{form_prod_int0}
 % e^{|\theta|\sqrt{n}} \geq
  c_0^{(i)}(y) := & \int_0^1 \prod_{k \in \Delta_i} p(\theta \cos (2 \pi a_k^y \omega)) d\omega   \\
= & b_0^{\sqrt{n}} + \sum \sum \sum b_0^{\sqrt{n}- \ell} b_{j_1} \cdot \ldots \cdot b_{j_\ell} \sum  \frac{\ell!}{2^{\ell-1}}  \mathbbm 1_{\left\{j_1 a_{k_1}^y + s_2 j_2 a_{k_2}^y + \dots + s_\ell j_\ell a^y_{k_\ell} = 0 \right\}},  \label{form_prod_int} 
\end{align}
where  the
four summations in the displayed formula above are taken over the ranges (in the order of appearance) 
\begin{equation} \label{sum_r}
\sum_{1 \leq \ell \leq \sqrt{n}}, \qquad \sum_{\substack{(k_1, \dots, k_\ell) \in \Delta_i,\\k_1 > \dots > k_\ell}}, \qquad \sum_{\substack{(j_1, \dots, j_\ell) \in \{0, \dots, d\}^\ell,\\(j_1, \dots, j_\ell) \neq (0,\dots,0)}}, \qquad \sum_{(s_2, \dots, s_\ell) \in \{-1,1\}^{\ell-1} },
\end{equation}
and the coefficients $b_j = b_j(\theta; d),$ $j=0, \ldots d$, are as in \eqref{b_form}. 
Furthermore,  for all sufficiently large $i$,  given  $\ell$ and $k_m, j_m, s_m, m = 1, \ldots, \ell$ as in \eqref{sum_r}, we have
\begin{equation}
  \label{zero}
  j_1 a_{k_1}^y + s_2 j_2 a_{k_2}^y + \dots + s_\ell j_\ell a^y_{k_\ell} = 0  \quad \Rightarrow \quad 
  j_1 y_{k_1} + s_2  j_2 y_{k_2} +  \dots +   s_\ell j_\ell y_{k_\ell} = 0. 
\end{equation}
      \end{prop}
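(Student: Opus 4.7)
The plan is to expand each factor $p(\theta \cos(2\pi a_k^y \omega))$ via the Fourier-style identity \eqref{b_form} of Lemma \ref{lem-estimate}, reduce products of cosines to sums using $2\cos\alpha\cos\beta = \cos(\alpha-\beta) + \cos(\alpha+\beta)$, and integrate term by term via $\int_0^1\cos(2\pi m \omega)\,d\omega = \mathbbm 1_{m=0}$. Both sides of \eqref{indep_prod} then unfold into sums of $\prod_k b_{j_k}/2^{\ell}$-weighted indicators recording which integer combinations $\sum_k \varepsilon_k j_k a_k^y$ vanish. Establishing \eqref{indep_prod} amounts to showing that every contributing global configuration factors into per-block contributions, and \eqref{form_prod_int} then emerges by collecting these per-block terms, after accounting for the decreasing-order constraint $k_1 > \cdots > k_\ell$ and the $s\to-s$ symmetry (which together produce the combinatorial factor $\ell!/2^{\ell-1}$).

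The arithmetic heart is the split $a_k^y = 2^k + y_k$ with $y_k = h_k 2^{\lceil k^{2/3}\rceil}\in\discset_k$, which writes each frequency as $P + Q = \sum_i (P_i + Q_i)$ with $P_i := \sum_{k \in \Delta_i}\varepsilon_k j_k 2^k$ and $Q_i := \sum_{k \in \Delta_i} \varepsilon_k j_k y_k$. Three estimates drive the proof: (a) $P_i$ is an integer multiple of $2^{\delta_{i,\min}}$, so $|P_i|\geq 2^{\delta_{i,\min}}$ whenever $P_i \neq 0$; (b) $|Q_i|\leq 2d\sqrt{n}\cdot 4^{\lceil n^{2/3}\rceil}=2^{O(n^{2/3})}$, using $|y_k|\leq 4^{\lceil k^{2/3}\rceil}$ and $|\Delta_i| = \sqrt n$; (c) the partition of Step 1 enforces $\delta_{i,\min} - \delta_{i-1,\max}\geq n^{2/5}$ and, for $i\geq n^{2/5}$, $\delta_{i,\min}\geq n^{9/10}$.

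With (a)--(c), the implication \eqref{zero} is almost immediate: within a single block $\Delta_i$ with $i\geq n^{2/5}$, the hypothesis $P_i + Q_i = 0$ together with $|P_i|\geq 2^{n^{9/10}}\gg 2^{O(n^{2/3})}\geq |Q_i|$ forces $P_i=0$, hence $Q_i=0$. For the factorization \eqref{indep_prod}, setting $I := \max\{i: P_i\neq 0\}$, estimates (a) and (c) show $|\sum_{i<I} P_i|\leq O(n)\cdot 2^{-n^{2/5}}|P_I|$ and (b) shows $|Q|\leq 2^{O(n^{2/3})}\ll |P_I|$, so the total frequency $P+Q$ cannot vanish unless every $P_i = 0$. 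Any contributing global configuration therefore has $P_i = 0$ in every block.

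The main technical hurdle is completing the factorization once all $P_i$ have been eliminated. In principle one could still have $Q_{i_1}, Q_{i_2} \neq 0$ with $Q_{i_1} + Q_{i_2} = 0$ across two blocks, since the $Q_i$'s live at overlapping $2$-adic scales $\lceil k^{2/3}\rceil$ rather than the well-separated scales $\delta_{i,\min}$, so the naive dominant-term argument fails for the $Q$-parts. The resolution is to combine the constraint $P_i = 0$, which pins $(\varepsilon_k j_k)_{k\in\Delta_i}$ to the small sub-lattice generated by the elementary carry relations $2\cdot 2^k - 2^{k+1}=0$, with the specific arithmetic of $y_k = h_k 2^{\lceil k^{2/3}\rceil}\in\discset_k$; one shows that such parasitic cross-block cancellations are excluded -- for every $y\in\discsetinf$, or at worst on a $\py\circ Y^{-1}$-null set, which is all that is required to then invoke \eqref{indep_prod} in the proof of Theorem~\ref{th4}.
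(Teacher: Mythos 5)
Your treatment of the first three quarters of the argument tracks the paper's proof and is sound: the expansion of each block via \eqref{b_form}, the collection of the constant term into \eqref{form_prod_int}, the decomposition of each frequency into a fixed part $P_i$ and a $\discsetinf$-dependent part $Q_i$, the proof of \eqref{zero} by comparing $|P_i|\geq 2^{\delta_{i,\min}}\geq 2^{n^{9/10}}$ against $|Q_i|=2^{O(n^{2/3})}$, and the elimination of any global configuration with some $P_i\neq 0$ via the dominant index $I=\max\{i:P_i\neq 0\}$. The gap is precisely at the point you yourself label ``the main technical hurdle'': once all $P_i$ vanish, you must still rule out $\sum_{i\in\mathcal V}\sigma_i Q_i=0$ with each $Q_i\neq 0$, and your proposal only asserts that ``one shows that such parasitic cross-block cancellations are excluded'' without giving any argument. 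This step is not a formality. Writing $e_k:=\lceil k^{2/3}\rceil$, take $k,k+1\in\Delta_{i_1}$ and $k',k'+1\in\Delta_{i_2}$ with $n^{2/5}\leq i_1<i_2$, use the carry relation $2\cdot 2^k-2^{k+1}=0$ in each block (so $P_{i_1}=P_{i_2}=0$, requiring $d\geq 2$), and choose $y_{k+1}=y_{k'}=0$, $y_{k'+1}=2^{e_{k'+1}}$, $y_k=2^{e_{k'+1}-1}$; these lie in the respective sets $\discset_{\cdot}$ because $e_k\leq e_{k'+1}-1\leq 2e_k$ for large $n$, and then $Q_{i_1}=2y_k-y_{k+1}=2^{e_{k'+1}}=-(2y_{k'}-y_{k'+1})=-Q_{i_2}\neq 0$. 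Since for $\theta>0$ every coefficient in the expansion is non-negative, such a configuration contributes a strictly positive cross-block term to the left-hand side of \eqref{indep_prod}. So whatever mechanism is supposed to exclude these cancellations must engage the exact interplay between the scales $e_k$, the block separations, and the range of admissible $h$ in $\discset_k$; it cannot be outsourced to ``one shows''. Your fallback ``at worst on a $\py\circ Y^{-1}$-null set'' also changes the statement, which is quantified over \emph{every} $y\in\discsetinf$ and every sufficiently large $n$, and a per-$n$ probabilistic exclusion would in any case need a rate summable in $n$, as in Lemma \ref{lem-estofexpect}.

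For comparison, the paper closes this case deterministically by placing every surviving non-zero frequency from block $i$ in the two-interval range $\bar{\mathcal R}_i$ of \eqref{range} --- the lower sub-interval coming from the divisibility of each non-zero $Q_i$ by $2^{\lceil\delta_{i,\min}^{2/3}\rceil}$ --- and then invoking the separation estimates \eqref{sep_1} and \eqref{sep_3} to conclude that no signed sum of one frequency per block can vanish. Your observation that the upper endpoints of the lower sub-intervals are delicate (the maximal element of $\discset_k$ is $4^{\lceil k^{2/3}\rceil}$ rather than $2^{\lceil k^{2/3}\rceil}$) is a legitimate one, and it is exactly why the configuration above deserves an explicit treatment; but identifying the difficulty is not the same as resolving it. As written, your proof of \eqref{indep_prod} is incomplete at its decisive step.
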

           
     \begin{proof}
  Fix $y = (y_k)_{k \in \N} \in \discsetinf$.  We will start by
  establishing \eqref{form_prod_int} and \eqref{zero}. 
  Multiplying out the product $\prod_{k \in \Delta_i} p(\theta \cos (2 \pi a_k^y \omega))$ within a certain fixed block $\Delta_i$,  using \eqref{b_form} and the cosine product trigonometric
  identity \eqref{standard_id} we obtain 
\begin{align} 
&  \prod_{k \in \Delta_i} p(\theta \cos (2 \pi a_k^y \omega)) \label{form_prod}\\
=  & b_0^{\sqrt{n}} + \sum \sum \sum b_0^{\sqrt{n}- \ell} b_{j_1} \cdot \ldots \cdot b_{j_\ell} \sum   \frac{\ell!}{2^{\ell-1}} \cos \big( 2 \pi (j_1 a_{k_1}^y + s_2 j_2 a_{k_2}^y + \dots + s_\ell j_\ell a_{k_\ell}^y )\omega \big), \nonumber
\end{align}
where the four summations in the displayed formula above are taken over the ranges (in the order of appearance) in \eqref{sum_r},
and the  power $\sqrt{n}$ in the constant term $b_0^{\sqrt{n}}$ and the coefficient
  $b_0^{\sqrt{n}- \ell}$ arises from the fact that 
 $|\Delta_i| = \sqrt{n}$. 
    Note that \eqref{form_prod}  shows that
$\prod_{k \in \Delta_i} p(\theta \cos (2 \pi a_k^y \omega))$ can be written as the sum of the constant term $b_0^{\sqrt{n}}$ (which would be the contribution for the ``independent'' case;  see
\eqref{eq:int_prod_p} of Lemma \ref{lem-estimate}) 
% the proof of Theorem \ref{th1}),
plus a linear combination of cosine functions with frequencies
\begin{equation} \label{exp}
j_1 a_{k_1}^y \pm \dots \pm j_\ell a_{k_\ell}^y,    
\end{equation} 
the latter following from the trigonometric identity \eqref{standard_id}. 
Assume that the expression in \eqref{exp} is non-zero. Recall that $a_k^y = 2^k +y_k$, where $y_k$ takes values in  $\discset_k := \{h 2^{k^{2/3}}: ~0 \leq h \leq 2^{k^{2/3}}\}$; here and in the sequel we write $2^{k^{2/3}}$ for $2^{\lceil k^{2/3} \rceil}$ for  notational conciseness. Substituting $a_k^y = 2^k + y_k$ into \eqref{exp}, we can rewrite the frequency of the cosine function as
\begin{equation} \label{frequencies}
j_1 (2^{k_1} + y_{k_1}) \pm \dots \pm j_\ell (2^{k_\ell} + y_{k_\ell}) = \underbrace{j_1 2^{k_1} \pm \dots \pm j_\ell 2^{k_\ell}}_{\text{fixed part}} + \underbrace{j_1 y_{k_1} \pm \dots \pm j_\ell y_{k_\ell}}_{\text{$\discsetinf$-dependent part}}, 
\end{equation}
which is different from zero only if at least one of the parts is non-zero. 
  % if both deterministic and random part are or if either one of them is non-zero.}
Note that by \eqref{sum_r}, the absolute value of the fixed part in this expression, whenever it is non-zero, has a value between
$$
2^{\delta_{i,\min}} \qquad \text{and} 
\qquad 
%d \sum_{k \in \Delta_i} 2^{\delta_{i,max}} \leq 
d \sqrt{n} 2^{\delta_{i,\max}},
$$
where recall $\delta_{i,\min}$ and $\delta_{i,\max}$, respectively,
are the smallest and  largest elements of $\Delta_i$. Indeed, the upper bound is trivial and since $k_1>k_2>\dots>k_\ell$,
  we also obtain the lower bound: 
\[
|j_1 2^{k_1} \pm \dots \pm j_\ell 2^{k_\ell}| = |2^{k_\ell}|\cdot \underbrace{|j_12^{k_1-k_\ell}\pm\dots\pm j_\ell|}_{\geq 1,\,\,\text{ since non-zero}} \geq 2^{\delta_{i,\min}}.
\] 
Similarly, recalling  the structure  of ${\mathcal D}_k$ from
\eqref{def-discset_recalled},  
the $\discsetinf$-dependent part, whenever it is non-zero, has
absolute value between
$$
2^{\delta_{i,\min}^{2/3}} \qquad \text{and} \qquad 
%d \sum_{k \in \Delta_i} 2^{\delta_{i,max}^{2/3}} \leq 
d \sqrt{n} 2^{\delta_{i,\max}^{2/3}}.
$$
Thus (if both are non-zero), the absolute value of the sum of  the
  fixed and $\discsetinf$-dependent parts always lies between 
\[
\frac{1}{2}2^{\delta_{i,\min}} \qquad \text{and} 
\qquad 2d\sqrt{n}2^{\delta_{i,\max}}.
\]
Hence, the product $\prod_{k \in \Delta_i} p(\theta \cos (2 \pi a^y_k \omega))$ can be written as a constant term, plus a linear combination of cosine functions,  each of which has a frequency that  is contained in the range
\begin{equation} \label{range}
\bar{{\mathcal R}}_i := \left[ 2^{\delta_{i,\min}^{2/3}}, d \sqrt{n} 2^{\delta_{i,\max}^{2/3}} \right] \cup \left[\frac{1}{2}2^{\delta_{i,\min}}, 2 d \sqrt{n} 2^{\delta_{i,\max}} \right].
\end{equation}
In other words, from \eqref{form_prod} and the above discussion, it follows that 
\begin{equation} \label{sets_si}
\prod_{k \in \Delta_i} p(\theta \cos (2 \pi a_k^y \omega)) = c_0^{(i)}(y) + \sum_{m^{(i)} \in \mathcal{R}_i} c_{m^{(i)}}^{(i)}(y) \cos (2 \pi m^{(i)} x)
\end{equation}
for some appropriate set $\mathcal R_i \subset \bar{\mathcal R}_i $ of positive integers,  
and appropriate coefficients $c_m^{(i)}(y)$.
Note that $c_0^{(i)}$ may differ from $b_0^{\sqrt{n}}$ since some frequencies of the form \eqref{exp} may vanish, and the coefficients of the corresponding cosines would then contribute to the  constant term.
Using \eqref{form_prod} and the fact that the integral over $[0,1]$  of any cosine term in that
expansion with a non-zero frequency vanishes, we obtain 
\eqref{form_prod_int}. Note that 
the dependence of  $c_0^{(i)} = c_0^{(i)} (y)$  on
$y$  arises because the value of the indicator 
\[  
  {\mathbbm 1}_{\left\{j_1 a_{k_1}^y + s_2 j_2 a_{k_2}^y + \dots + s_\ell j_\ell a_{k_\ell}^y = 0 \right\}} 
\]
depends on $(y_{k_1}, \ldots, y_{k_\ell})$ via the values of $a_{k_1}^y, \dots, a_{k_\ell}^y$.

We now turn to the proof of \eqref{zero}. Recall that we  constructed our blocks $\Delta_i, \Delta'_i$ and defined $\integ_n$ such that
$\delta_{i+1,\min} \geq \delta_{i,\max} + n^{2/5}$ and for $i \leq \integ_n$, $\delta_{i,\max} + n^{2/5} \leq n$,  see \eqref{sep_0}, 
which together with the  mean-value theorem  implies  that 
\[ \delta_{i+1,\min}^{2/3} \geq \left(\delta_{i,\max} + n^{2/5}\right)^{2/3}
\geq  \delta_{i,\max}^{2/3}  + \frac{2}{3} \frac{1}{(\delta_{i,\max} + n^{2/5})^{1/3}} n^{4/15}
  \geq \delta_{i,\max}^{2/3}  + \frac{2}{3} n^{1/15}, 
  \]
  and hence, for $1 \leq i \leq \integ_n$, 
\begin{equation} \label{sep_1} 
2^{\delta_{i+1,\min}} \geq 2^{n^{2/5}+ \delta_{i,\max}} \quad \mbox{  and  } \quad 2^{\delta_{i+1,\min}^{2/3}} \geq 
2^{\delta_{i,\max}^{2/3} + \frac{2}{3} n^{1/15}}, 
\end{equation}
Also, for sufficiently large $n$ and  $n^{2/5} \leq i \leq \integ_n$, 
note that  $\delta_{i,\min} \geq (n^{2/5})(n^{1/2}) = n^{9/10} = n^{7/30 + 2/3}  \geq n^{7/30} + n^{2/3} \geq n^{7/30}+ \delta_{\integ_n,\max}^{2/3}$,
and so
  \begin{equation} \label{sep_3}
    \frac{1}{2} 2^{\delta_{n^{2/5},\min}} \geq \frac{2^{n^{7/30}-1}}{d \sqrt{n}} \left( d \sqrt{n} 2^{\delta^{2/3}_{M_n,\max}}\right).
      \end{equation}
The  last inequality shows that for all sufficiently large $n\in\N$,  any ``$\discsetinf$-dependent part'' of a
frequency that could originate from some product with indices in $\Delta_i$ (with $n^{2/5}\leq i \leq \integ_n$) is of a much smaller order than the smallest non-zero ``fixed'' part that we could encounter from such blocks, which proves \eqref{zero}.
(This is why we split off the product $\term^{(1)}$ with the frequencies in $\Delta_i \cup \Delta_i'$ for $1 \leq i < n^{2/5}$  earlier in Lemma \ref{lem-thus1},
since the frequencies there are so small that their fixed parts could cause correlations with the
$\discsetinf$-dependent parts coming from blocks with higher indices.)

To complete the proof of the proposition, it only remains to prove \eqref{indep_prod}.
To show how our construction facilitates control of the value of the integral 
\begin{equation} \label{contri}
\int_0^1 \prod_{i=n^{2/5}}^{\integ_n} \prod_{k \in \Delta_i} p(\theta \cos (2 \pi a_k^y \omega)) d\omega, 
\end{equation}
note that  \eqref{sets_si} implies that we have for $y \in \discsetinf$ and $\omega \in [0,1]$, 
\begin{equation} \label{r_h_s_of} 
  \prod_{i=n^{2/5}}^{\integ_n} \prod_{k \in \Delta_i} p(\theta \cos (2 \pi a_k^y \omega))
= \prod_{i=n^{2/5}}^{\integ_n} \left( c_0^{(i)}(y) + \sum_{m^{(i)} \in \mathcal{R}_i} c_{m^{(i)}}^{(i)}(y) \cos (2 \pi m^{(i)} \omega) \right), 
\end{equation}
with $c_0^{(i)}$ as in \eqref{form_prod_int} and $c_{m^{(i)}}^{(i)}$ other coefficients as described above (whose precise values will not matter for what follows). 
When multiplying out the terms in the product on the right-hand side of \eqref{r_h_s_of}, for each $i$ in the range $n^{2/5} \leq i \leq \integ_n$ we can either choose the factor $c_0^{(i)}(y)$ or a factor of the form $c_{m^{(i)}}^{(i)} (y)\cos (2 \pi m^{(i)} \omega)$ for some $m^{(i)} \in \mathcal{R}_i$. That is, we can write the right-hand side of \eqref{r_h_s_of} as
$$
\sum_{\mathcal{U}, \mathcal{V}} \left( \prod_{i \in \mathcal{U}} c_0^{(i)} (y)\right) \left( \prod_{i \in \mathcal{V}} \sum_{m^{(i)} \in \mathcal{R}_i} c_{m^{(i)}}^{(i)}(y) \cos (2 \pi m^{(i)} \omega) \right).
$$
where the sum is taken over all sets $\mathcal{U}, \mathcal{V}$ that form a disjoint partition  of
$\{n^{2/5}, \dots, \integ_n\}$, i.e.,\ $\mathcal{U} \cap \mathcal{V} = \emptyset$ and $\mathcal{U} \cup \mathcal{V} = \{n^{2/5}, \dots, \integ_n\}$. Assume that $\mathcal{V}$ is non-empty. Then using the standard trigonometric identity \eqref{standard_id} we can expand 
$$
\prod_{i \in \mathcal{V}} \sum_{m^{(i)} \in \mathcal{R}_i} c_{m^{(i)}}^{(i)}(y) \cos (2 \pi m^{(i)} \omega)
$$
into a linear combination of cosine-functions with frequencies of the form
$$
\sum_{i \in \mathcal{V}} \pm m^{(i)}, \qquad m^{(i)} \in \mathcal{R}_i.  
$$
Since  $\mathcal{R}_i$ is contained in the range set $\bar{\mathcal{R}}_i$ defined in \eqref{range}, and since we have the estimates \eqref{sep_1}
and \eqref{sep_3} separating these respective ranges for different values of $i$, it is not possible that the linear combination equals zero (provided that $n$ is large enough). Thus our construction ensures that all frequencies of cosine-functions in this linear combination are non-zero, which implies that their integrals vanish over $[0,1]$, so that we have
$$
\int_0^1 \prod_{i \in \mathcal{V}} \sum_{m^{(i)} \in \mathcal{S}_i} c_{m^{(i)}}^{(i)}(y) \cos (2 \pi m^{(i)} \omega) d\omega = 0, 
$$
and consequently, 
$$
\int_0^1 \left( \prod_{i \in \mathcal{U}} c_0^{(i)} (y)\right) \left( \prod_{i \in \mathcal{V}} \sum_{m^{(i)} \in \mathcal{R}_i} c_{m^{(i)}}^{(i)}(y) \cos (2 \pi m^{(i)} \omega) \right)d\omega = 0,
$$
whenever $\mathcal{V}$ is non-empty. Thus, the only term that actually contributes to the value of \eqref{contri} is when all indices $i$ are contained in $\mathcal{U}$ and  $\mathcal{V} = \emptyset$. The 
contribution of this case to the integral is
$$
\int_0^1 \prod_{i=n^{2/5}}^I c_0^{(i)}(y) d\omega = \prod_{i=n^{2/5}}^I c_0^{(i)}(y),
$$
so that in total we have for every $y \in \discsetinf$, 
$$
\int_0^1 \prod_{i=n^{2/5}}^I \prod_{k \in \Delta_i} p(\theta \cos (2 \pi a_k^y \omega)) d\omega = \prod_{i=n^{2/5}}^I c_0^{(i)}(y) = \prod_{i=n^{2/5}}^{\integ_n} \int_0^1 \prod_{k \in \Delta_i} p(\theta \cos (2 \pi a_k^y \omega)) d\omega.
$$
This is \eqref{indep_prod} and completes the proof of the proposition. 
\end{proof}

\noindent
{\em Step 4.  Give an explicit formula for $\prod_{i=n^{2/5}}^{\integ_n} \int_0^1 \prod_{k \in \Delta_i} p(\theta \cos (2 \pi a_k^Y \omega)) d\omega$ which holds with large $\py$-probability.} We will prove the following result.   

  \begin{lemma}
    \label{lem-estofexpect}
    Let   $Y = (Y_k)_{k \in \N}$ be the sequence  
of independent random variables, with each $Y_k$ uniformly distributed on the set ${\mathcal D}_k$ defined in \eqref{def-discset_recalled}. 
Then for every $d \in \N$, with  $p = p_d$, the Taylor  polynomial of length $d$, and $b_0 = b_0(\theta; d)$ as in \eqref{eqn:b_0_theta}, we have 
\begin{equation}\label{estofexpect}
 \py \left( \prod_{i=n^{2/5}}^{\integ_n} \int_0^1 \prod_{k \in \Delta_i} p\big(\theta \cos (2 \pi a_k^Y \omega)\big) d\omega = \prod_{i=n^{2/5}}^{M_n} b_0^{\sqrt{n}}\right) \geq 1 - n^{-3/2},
\end{equation}
for all sufficiently large $n\in\N$. 
  \end{lemma}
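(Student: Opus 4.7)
The plan is to combine Proposition~\ref{prop-interchange} with a union bound, reducing the desired statement to a per-configuration anti-concentration estimate for the independent uniform variables $Y_k$. By the formulas \eqref{indep_prod} and \eqref{form_prod_int}, the integral product equals $\prod_{i=n^{2/5}}^{\integ_n} c_0^{(i)}(Y)$, where each $c_0^{(i)}(Y)$ equals the target $b_0^{\sqrt n}$ plus a sum of contributions indexed by non-trivial configurations $(\ell,k_1{>}\dots{>}k_\ell,j_1,\dots,j_\ell,s_2,\dots,s_\ell)$ drawn from $\Delta_i$ with $(j_1,\dots,j_\ell)\neq(0,\dots,0)$, each weighted by the indicator $\mathbbm 1_{\{j_1 a_{k_1}^Y+s_2 j_2 a_{k_2}^Y+\dots+s_\ell j_\ell a_{k_\ell}^Y=0\}}$. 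Thus the event in \eqref{estofexpect} contains the event that all such indicators vanish, for every block index $i\in\{n^{2/5},\dots,\integ_n\}$, so it suffices to upper bound the probability that at least one such indicator equals $1$.

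Next, I would invoke the implication \eqref{zero} from Proposition~\ref{prop-interchange}: once $i\geq n^{2/5}$, the separation of scales between the ``fixed'' contribution $j_1 2^{k_1}+\dots+s_\ell j_\ell 2^{k_\ell}$ and the ``$\discsetinf$-dependent'' contribution $j_1 Y_{k_1}+\dots+s_\ell j_\ell Y_{k_\ell}$ forces the entire sum to vanish only when the $Y$-dependent part itself vanishes. Hence it suffices to estimate
\[
\py\bigl(j_1 Y_{k_1}+s_2 j_2 Y_{k_2}+\dots+s_\ell j_\ell Y_{k_\ell}=0\bigr)
\]
for each non-trivial configuration. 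Fixing such a configuration, let $m^*$ be the \emph{largest} index $m\in\{1,\dots,\ell\}$ with $j_m\neq 0$, so that $k_{m^*}\geq \delta_{i,\min}$. Conditioning on the independent variables $(Y_{k_m})_{m\neq m^*}$, the event reduces to $s_{m^*} j_{m^*}Y_{k_{m^*}}=C$ for some conditionally constant value $C$; since $j_{m^*}\neq 0$ this determines $Y_{k_{m^*}}$ uniquely, and since $Y_{k_{m^*}}$ is uniform on a set of cardinality $2^{\lceil k_{m^*}^{2/3}\rceil}+1$, the probability is at most $2^{-\lceil k_{m^*}^{2/3}\rceil}\leq 2^{-\delta_{i,\min}^{2/3}}$. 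For $i\geq n^{2/5}$, the construction in Step~1 gives $\delta_{i,\min}\geq n^{9/10}$, so this probability is at most $2^{-n^{3/5}}$.

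Finally, I would conclude via a union bound. Within a single block $\Delta_i$ (of size $\sqrt n$), the number of configurations is bounded by
\[
\sum_{\ell=1}^{\sqrt n}\binom{\sqrt n}{\ell}(d+1)^\ell\, 2^{\ell-1}\;\leq\;\bigl(2d+3\bigr)^{\sqrt n},
\]
and there are at most $\integ_n\leq \sqrt n$ blocks. Hence the probability that some indicator equals $1$ is at most $\sqrt n\,(2d+3)^{\sqrt n}\,2^{-n^{3/5}}$. Since $n^{3/5}$ grows strictly faster than $\sqrt n$, this quantity is $o(n^{-3/2})$, which yields \eqref{estofexpect} for all sufficiently large $n$. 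The main technical point, already done in Proposition~\ref{prop-interchange}, is the scale-separation argument behind \eqref{zero}; once it is in hand, the proof of the lemma is essentially a careful union bound, with the construction of $\discset_k$ and of the blocks $\Delta_i,\Delta_i'$ tuned precisely so that the cardinality $2^{\lceil k^{2/3}\rceil}$ of each support dominates the combinatorial count of configurations.
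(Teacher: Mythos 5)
Your proposal is correct and follows essentially the same route as the paper: reduce via \eqref{indep_prod}, \eqref{form_prod_int}, and \eqref{zero} to the event that every indicator $\mathbbm 1_{\{j_1 Y_{k_1}+\dots+s_\ell j_\ell Y_{k_\ell}=0\}}$ vanishes, bound each such probability by $2^{-\delta_{i,\min}^{2/3}}\leq 2^{-n^{3/5}}$ by conditioning on all but one $Y_{k_m}$ with $j_m\neq 0$, and finish with a union bound over configurations and blocks. Your configuration count $(2d+3)^{\sqrt n}$ is slightly sharper than the paper's cruder $\sqrt n\,\sqrt n^{\sqrt n}(d+1)^{\sqrt n}2^{\sqrt n}$, but both are overwhelmed by $2^{-n^{3/5}}$, so the arguments are interchangeable.
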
  
  \begin{proof}
    Fix $d \in \N$ and set $p = p_{d}$ to  be the corresponding Taylor polynomial, and let $b_j := b_j(\theta; d),$
    $j = 0, 1, \ldots, d$, be
    the associated coefficients as presented in Equations \eqref{exp_pk} and \eqref{eqn:b_0_theta} of Lemma \ref{lem-estimate} (see also \eqref{b_form}). 
For any $y \in \discset^\infty$, let $c_0^{(i)}(y)$ be defined as in  \eqref{form_prod_int}. Combining   \eqref{form_prod_int} and \eqref{zero} in Proposition \ref{prop-interchange} with
the fact that $i \geq n^{2/5}$ in   \eqref{form_prod_int}, we see that for all sufficiently large $n\in\N$ and for all $i$ in the range $ n^{2/5}  \leq i \leq \integ_n$ we have
\begin{align} 
& \int_0^1 \prod_{k \in \Delta_i} p(\theta \cos (2 \pi a_k^y \omega)) d\omega \nonumber\\
= & b_0^{\sqrt{n}} + \sum \sum \sum b_0^{\sqrt{n}- \ell} b_{j_1} \cdot \ldots \cdot  b_{j_\ell} \sum \frac{\ell!}{2^{\ell-1}}
\mathbbm{1}_{\{j_1 y_{k_1} + s_2 j_2 y_{k_2} + \dots + s_\ell j_\ell y_{k_\ell} = 0 \}}, \label{form_prod_int_2a}
\end{align}
with the summation ranges as specified in \eqref{sum_r}.

We now  estimate  the probability of the event $\{j_1 Y_{k_1} + s_2 j_2 Y_{k_2} + \dots + s_\ell j_\ell Y_{k_\ell} = 0\}$. 
We recall that by assumption, $Y_{k_1}, \dots, Y_{k_\ell}$ are independent discrete random variables, and that $s_2, \dots, s_\ell$ are just some plus/minus signs. In principle the distribution of $j_1 Y_{k_1} + s_2 j_2 Y_{k_2} + \dots + s_\ell j_\ell Y_{k_\ell}$ could thus be calculated exactly by some convolution arguments. However, for our purpose it suffices to establish a very crude bound. Observe from \eqref{sum_r} that  there is at least one value among $j_1, \dots, j_\ell$ that is non-zero. Let us assume, without loss of generality, that $j_\ell \neq 0$. We split off the corresponding
random variable $Y_{k_\ell}$ in the indicator in \eqref{form_prod_int_2a}, which is
independent of $Y_{k_1}, \dots, Y_{k_{\ell-1}}$ since $k_1 >\dots > k_\ell$, and
  use the fact that by assumption $Y_{k_\ell}$ is uniformly distributed among the	$2^{k_\ell^{2/3}} + 1$ different values
  in the set $\discset_{k_\ell}$ defined in \eqref{def-discset_recalled}, 
  to obtain
\[ 
\begin{array}{ll}
& \py (j_1 Y_{k_1} + s_2 j_2 Y_{k_2} + \dots + s_\ell j_\ell Y_{k_\ell} = 0) \\
 & \qquad =  \displaystyle\sum_{a \in \mathbb{Z}} \left( \py \big(j_1 Y_{k_1} + s_2 j_2 Y_{k_2} + \dots + s_{\ell-1} j_{\ell-1} Y_{k_{\ell-1}} = a \big) \underbrace{ \py \big(s_\ell j_\ell Y_{k_\ell} = -a \big)}_{\leq 2^{-k_\ell^{2/3}}} \right) \\
& \qquad \leq  2^{-k_\ell^{2/3}} \underbrace{\sum_{a \in \mathbb{Z}} \py \big(j_1 Y_{k_1} + s_2 j_2 Y_{k_2} + \dots + s_{\ell-1} j_{\ell-1} Y_{k_{\ell-1}} = a \big)}_{=1} \\
& \qquad \leq  2^{-\delta_{i,\min}^{2/3}},
\end{array}
\]
where the last inequality holds because $(k_1,\dots,k_\ell)\in\Delta_i$, and  $\delta_{i,\min}$ is by  definition the smallest  element of $\Delta_i$. In the quadruple sum in line \eqref{form_prod_int_2a} the total number of summands is at most $\sqrt{n} \sqrt{n}^{\sqrt{n}} (d+1)^{\sqrt{n}} 2^{\sqrt{n}}$. Note that by construction $\delta_{i,\min} \geq n^{9/10}$ for all $i \geq n^{2/5}$, so that $2^{-\delta_{i,\min}^{2/3}} \leq 2^{(-n^{3/5})}$ for all $i \geq n^{2/5}$. Thus, by a union bound the $\py$-probability that there exists at least one configuration of $\ell, ~(k_1, \dots, k_\ell),~(j_1, \dots, j_\ell),~(s_1, \dots, s_\ell)$ such that $j_1 Y_{k_1} + s_2 j_2 Y_{k_2} + \dots + s_\ell j_\ell Y_{k_\ell} = 0$ holds is bounded above by
$$
\sqrt{n} \sqrt{n}^{\sqrt{n}} (d+1)^{\sqrt{n}} 2^{\sqrt{n}} 2^{(-n^{3/5})}. 
$$
Observe that, since $d$ is fixed, for sufficiently large $n\in\N$, $(d+1) \leq \sqrt{n}$ and 
\[
\sqrt{n} \sqrt{n}^{\sqrt{n}} (d+1)^{\sqrt{n}} 2^{\sqrt{n}} 2^{(-n^{3/5})} \leq 2^{2\sqrt{n}\log_2(n)-n^{3/5}}
\]
for which we can give the crude upper bound $n^{-2}$ holding for all large enough $n\in\N$. Thus,
$$
\py \left(  c_0^{(i)}(Y) \neq b_0^{\sqrt{n}} \right) \leq \frac{1}{n^2},
\quad i \in \{n^{2/5}, \dots, M_n\}, 
$$
for  all sufficiently large $n\in\N$. Now, by  \eqref{def-integn}, $|\{n^{2/5}, \dots, M_n\}| \leq M_n\leq \sqrt{n}$. Thus, we have
$$
\py \left(  \bigcup_{i=n^{2/5}}^{M_n} \left\{c_0^{(i)}(Y) \neq b_0^{\sqrt{n}} \right\} \right) \leq \frac{1}{n^{3/2}},
$$
for all large enough $n\in\N$, which implies the statement of the lemma.   
  \end{proof}

\noindent 
{\em Step 5:  Complete the proof of the LDP stated in Theorem \ref{th4}.}  By the definition of $\integ_n$ in \eqref{def-integn}, we have the relation 
\[
\bigcup_{i=1}^{\integ_n+1}(\Delta_i\cup\Delta_i') \supset \{1,\dots,n\} 
\]
and $M_n\leq \sqrt{n}$.
Together with the fact that $\Delta_i, \Delta'_i, i \in \N,$ are all disjoint,  $|\Delta_i|= \sqrt{n}$, and $|\Delta'_i| = n^{2/5}$,    
this implies 
\begin{eqnarray*}
\sum_{i=n^{2/5}}^{\integ_n} \sqrt{n} & \geq & n - \sum_{i=n^{2/5}}^{\integ_n}|\Delta_i'| - \sum_{i=1}^{n^{2/5}-1}(|\Delta_i|+|\Delta_i'|) -(|\Delta_{\integ_n+1}|+|\Delta_{\integ_n+1}'|) \\
& \geq & n - n^{2/5} \sqrt{n} - 2 \sqrt{n} n^{2/5} - 2 \sqrt{n} \\
& = & n - 3 n^{9/10} - 2 \sqrt{n},
\end{eqnarray*}
while in the other direction trivially $\sum_{i=n^{2/5}}^{\integ_n} \sqrt{n} \leq n$. Thus, for the factor $\prod_{i=n^{2/5}}^{M_n} b_0^{\sqrt{n}}$ appearing in Lemma \ref{lem-estofexpect} we have the lower and upper bounds
\begin{equation}
  \label{b0-est}
  b_0^{n - 3 n^{9/10} - 2 \sqrt{n}} \leq \prod_{i=n^{2/5}}^{M_n} b_0^{\sqrt{n}} \leq b_0^n.
  \end{equation}

Thus,  for any fixed $\theta \in \R$, given any  $\ve > 0$, choosing $d= d(\ve)$ such that \eqref{thus_2} of Lemma \ref{lem-thus1} holds with $p=p_{d(\ve)}$,  then  invoking \eqref{thus_1} as well
as \eqref{indep_prod} of Proposition \ref{prop-interchange}, next 
applying Lemma \ref{lem-estofexpect} with $d = d(\ve)$, $b_0  :=  b_0(\theta; d(\ve))$, and finally
using \eqref{b0-est}  we obtain
\begin{equation} \label{estofexpect-new}
  (1+\ve)^{-n} e^{-5 \theta n^{9/10}} b_0^{n - 3 n^{9/10} - 2 \sqrt{n}} \leq \int_0^1 e^{\theta S_n^Y (\omega)}  d\omega
  \leq b_0^{n} e^{5 \theta n^{9/10}} (1-\ve)^{-n}
\end{equation}
with $\py$-probability at least 
 $1 - n^{-3/2}$,  
 for all sufficiently large $n\in\N$.  Next, note that 
we have $\log (1+\ve) \leq \ve$, and we can (and will) assume that $\ve>0$ is so small that $\log  (1-\ve) \geq -2 \ve$. We also have the trivial bound $5 \theta n^{9/10} + (3 n^{9/10} +2\sqrt{n})\log b_0 = n^{9/10}(5\theta + 3\log b_0)+(2\sqrt{n})\log b_0 \leq n \ve$ for all sufficiently large $n\in \N$. Thus, from \eqref{estofexpect-new} we can deduce that for sufficiently large $n\in\N$, with $\py$-probability at least $1-n^{-3/2}$, 
\begin{align*}
-2\varepsilon \leq -\log (1+\ve) -\frac{1}{n}\Big(5\theta n^{9/10} + (3n^{9/10}+2\sqrt{n})\log b_0\Big) 
& \leq \frac{1}{n} \log \left(\int_0^1 e^{\theta S_n^Y (\omega)}  d\omega \right) - \log b_0 \\
&\leq \frac{1}{n}[5\theta n^{9/10}] - \log (1-\ve)  \\
&\leq 3\ve. 
\end{align*}
This implies  that for all sufficiently large $n\in\N$, 
$$
\py \left( \left| \frac{1}{n} \log \left(\int_0^1 e^{\theta S_n^Y (\omega)}  d\omega \right) -\log b_0 \right| \leq 3 \varepsilon \right) \geq 1 - n^{-3/2}. 
$$
 By the Borel-Cantelli lemma, with $\py$-probability equal to one only finitely many exceptional events occur. This implies that $\py$-almost surely we have
\begin{equation} \label{ublimsup}
\limsup_{n \to \infty} \left| \frac{1}{n} \log \left(\int_0^1 e^{\theta S_n^Y (\omega)}  d\omega \right) -\log b_0 \right| \leq 3 \varepsilon.
\end{equation}
Recall from \eqref{eq:b_0_theta} that $b_0(\theta; d(\ve))$ is a finite polynomial approximation to the modified Bessel function $B_0(\theta)$, the moment generating function defined in \eqref{arcsin_mgf}, and that $b_0(\theta; d(\ve)))$ can be made arbitrarily close to $B_0(\theta)$ by choosing the degree $d=d(\ve)$ sufficiently large.  
Thus,  letting $\ve\to 0$ (and hence $d(\ve)\to+\infty$) and using \eqref{ublimsup} together with \eqref{arcsin_cumgf}, we derive, for every fixed 
$\theta \in \mathbb{R}$,
$$
\lim_{n \to \infty} \frac{1}{n} \log \left( \int_0^1 e^{\theta \sum_{k=1}^n \cos (2 \pi a_k \omega)} d\omega \right) = \log B_0(\theta) = \widetilde{\Lambda} (\theta) \qquad \text{$\py$-a.s.}
$$ 
Since $\widetilde{\Lambda} (\theta)$ is a continuous (in fact, differentiable) function in $\theta$, we can deduce that $\py$-almost surely this result holds for all $\theta \in \mathbb{R}$: for $\py$-almost all realizations of the random sequence $Y$, or equivalently, $a_1^Y, a_2^Y, \dots$, we have
$$
\lim_{n \to \infty} \frac{1}{n} \log \left( \int_0^1 e^{\theta S_n^Y(\omega)} d\omega \right) =
 \lim_{n \to \infty} \frac{1}{n} \log \left( \int_0^1 e^{\theta \sum_{k=1}^n \cos (2 \pi a_k^Y \omega)} d\omega \right) = \widetilde{\Lambda}(\theta) \qquad \text{for all $\theta \in \mathbb{R}$.}
$$
Together with the G\"artner-Ellis theorem, Theorem \ref{th-GE},  this proves the desired result.

% % % % % % % % % % % % % % % % % %
\subsection*{Acknowledgement}
% % % % % % % % % % % % % % % % % %
CA is supported by the Austrian Science Fund (FWF), projects F-5512, I-3466, I-4945 and Y-901. ZK is supported by the German Research Foundation under Germany's Excellence Strategy EXC 2044 -- 390685587, Mathematics M\"unster: Dynamics - Geometry - Structure. JP is supported by the Austrian Science Fund (FWF), projects P32405 and the Special Research Program F5508-N26. KR is supported by the National Science Foundation (NSF) Grant DMS-1954351 and the Roland George Dwight Richardson Chair at Brown University. 
We also gratefully acknowledge the support of the Oberwolfach Research Institute for Mathematics, where initial discussions were held during the workshop ``New Perspectives and Computational Challenges in High Dimensions'' (Workshop ID 2006b).

\appendix 
  
\section{Proof of Proposition~\ref{prop:I_2_I_3}}
\label{ap-a}

Fix an integer  $q\in \{2,3,\ldots\}$. For $m\in\N$ and $n\in\N$ recall that $A_m(n)$ denotes the number of solutions to the equation
\begin{equation}\label{eq:sum_powers_two}
\sum_{i=1}^m \eps_i q^{k_i} = 0
\end{equation}
in the unknowns $k_1,\ldots,k_m\in \{1,\ldots,n\}$ and $\eps_1,\ldots,\eps_m \in \{+1,-1\}$.
\begin{prop}\label{prop:a_m_n}
Fix $m\in\N$. Then, the function $A_m(n)$ restricted to the values $n\geq m-2$ is a polynomial in $n$ of degree at most $[m/2]$.
\end{prop}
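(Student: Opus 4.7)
I would begin by stratifying the $m$-tuples $(\varepsilon_i, k_i) \in \{\pm 1\} \times \{1, \ldots, n\}$ by their combinatorial \emph{type}: the ordered set partition $B_1, \ldots, B_s$ of $\{1, \ldots, m\}$ under the equivalence $i \sim j \iff k_i = k_j$, with blocks ordered by their common $k$-value, together with the sign function $\varepsilon : \{1, \ldots, m\} \to \{\pm 1\}$. Setting $d_j := \sum_{i \in B_j}\varepsilon_i$, the Diophantine condition \eqref{eq:sum_powers_two} reduces to $\sum_{j=1}^s d_j q^{k_j} = 0$ on the strictly increasing tuple $k_1 < \cdots < k_s$ in $\{1, \ldots, n\}$. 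Since the number of types depends only on $m$, it suffices to show that each type contributes a polynomial in $n$ of degree at most $\lfloor m/2 \rfloor$, and that all the polynomial identities hold already for $n \geq m - 2$.

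I would then separate \emph{trivial} types (all $d_j = 0$) from \emph{non-trivial} ones. For trivial types the equation is vacuous, so the contribution is $\binom{n}{s}$; balancing of signs in each block forces $|B_j|$ to be even and at least $2$, whence $s \leq \lfloor m/2 \rfloor$. For non-trivial types, let $N \subset \{1, \ldots, s\}$ index the \emph{active} blocks ($d_j \neq 0$) and $s' := |N|$. Since a single nonzero term cannot vanish, $s' \geq 2$. The active equation $\sum_{j \in N} d_j q^{k_j} = 0$ is invariant under $k_j \mapsto k_j + c$, so its solutions decompose into finitely many orbits indexed by the \emph{shape} $(\delta_1, \ldots, \delta_{s'-1})$ of consecutive differences. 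Finiteness of valid shapes uses $q \geq 2$: the leading term $|d_{j_{s'}}| q^{L}$ must be dominated by the remaining $s'-1$ terms of absolute value at most $m \cdot q^{L - \delta_{s'-1}}$, so $q^{\delta_{s'-1}} \leq m$; iterating bounds every $\delta_r$ by a function of $m$.

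For a fixed type and a fixed valid shape with total length $L := \delta_1 + \cdots + \delta_{s'-1}$, I would parametrise the solution by the smallest active position $k_{j_1}$ and distribute the $s - s'$ passive positions in the gaps: $j_1 - 1$ of them below $k_{j_1}$, $j_{r+1} - j_r - 1$ strictly between $k_{j_r}$ and $k_{j_{r+1}}$, and $s - j_{s'}$ above $k_{j_{s'}}$. This splits the count into
\[
\binom{k_{j_1} - 1}{j_1 - 1} \left( \prod_{r=1}^{s'-1} \binom{\delta_r - 1}{j_{r+1} - j_r - 1} \right) \binom{n - k_{j_1} - L}{s - j_{s'}}.
\]
The middle product is a constant depending only on the type and shape; summing the two outer binomials over $k_{j_1}$ via the Vandermonde identity $\sum_{t=0}^M \binom{t}{a}\binom{M-t}{b} = \binom{M+1}{a+b+1}$ collapses everything to a constant multiple of $\binom{n-L}{j_1 + s - j_{s'}}$. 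The exponent is at most $s - s' + 1$, and the combined inequalities $s' \geq 2$ and $m \geq s' + 2(s - s')$ (from $|B_j| \geq 1$ for active and $|B_j| \geq 2$ for passive blocks) yield $s - s' + 1 \leq (m - s' + 2)/2 \leq m/2$. Summing over the finite collection of types and shapes then produces a polynomial in $n$ of degree at most $\lfloor m/2 \rfloor$.

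The hardest part, I expect, will be pinning down the threshold $n \geq m - 2$, rather than merely ``for $n$ sufficiently large.'' Since $\binom{n-L}{k}$ coincides with the underlying count of placements as soon as $n \geq L$, the task reduces to verifying $L \leq m - 2$ for every shape arising in the non-trivial case. This requires careful book-keeping of the iterated estimates $q^{\delta_r} \leq m$ together with the constraints $s' \leq m$ and $q \geq 2$, and is where I expect the sharpest estimates of the proof to reside.
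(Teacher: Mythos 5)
Your stratification by combinatorial type is a genuinely different route from the paper's (which argues by induction on $m$ via recurrences for refined counts $A_{m,p_1,p_2}(n)$ obtained by cancelling the $\pm q^1$ terms and dividing by $q$), but it has a genuine gap: the claimed finiteness of shapes is false. The iteration that is supposed to bound every gap $\delta_r$ works only as long as no intermediate partial sum of the active equation vanishes. If $\sum_{t>r} d_{j_t}q^{k_{j_t}}=0$, then the complementary partial sum vanishes too, and nothing constrains $\delta_r=k_{j_{r+1}}-k_{j_r}$. Concretely, take $q=2$, $m=6$, and the type with blocks of sizes $2,1,2,1$ and signs giving $d=(2,-1,2,-1)$: the solutions $2\cdot 2^{k}-2^{k+1}+2\cdot 2^{k'}-2^{k'+1}=0$ exist for every pair $k+1<k'$, so the gap $k'-(k+1)$ is unbounded, the set of shapes for this single type is infinite, and the type contributes on the order of $n^2$ solutions --- whereas your per-type bound $s-s'+1$ equals $1$ here (since $s=s'=4$). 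The repair is to decompose the active blocks further into \emph{irreducible} vanishing groups (no proper initial segment of a group sums to zero); within each group your domination argument does bound the gaps, and if there are $c$ such groups the count has $c+(s-s')$ free position parameters, which is still at most $\lfloor m/2\rfloor$ because each irreducible group needs at least two active blocks (so $c\le s'/2$) and $s-s'\le (m-s')/2$. Without this extra layer the argument as written is incorrect, even though the final statement survives.

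A second, smaller issue is that you explicitly leave the threshold $n\ge m-2$ unproved, reducing it to the unverified claim that every shape has total length $L\le m-2$; after the repair above this must be checked per irreducible group, and the bookkeeping is not automatic. This is precisely where the paper's inductive scheme is more economical: passing from $(m,n)$ to smaller $m$ and to $n-1$ in the recurrences makes the threshold $n\ge m-2$ propagate through the induction for free, with no need for sharp bounds on gap lengths.
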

\begin{proof}
  Let $A_{m,p_1,p_2}(n)$ be the number of representations of zero as a sum of signed powers of $2$ which begins  with $p_1$ terms of the form $+q^1$ followed by  $p_2$ terms of the form $-q^1$ and does not contain any 
  more $\pm q$-terms. More precisely, for $p_1,p_2\in\N_0$ such that $p_1+p_2\leq m$, we define  $A_{m,p_1,p_2}(n)$ to be the number of solutions to~\eqref{eq:sum_powers_two} such that
\begin{align*}
&k_1=\ldots=k_{p_1+p_2} = 1,
\\
&\eps_1=\ldots=\eps_{p_1} = +1,
\\
&\eps_{p_1+1}=\ldots=\eps_{p_1+p_2} = -1,
\\
&k_i\in \{2,\ldots,n\} \text{ for } i\in \{p_1+p_2+1,\ldots,n\}.
\end{align*}
Since in any general solution to~\eqref{eq:sum_powers_two} the terms $\pm q$ can appear at arbitrary positions, we have
$$
A_m(n) = \sum_{\substack{p_1,p_2\geq 0\\p_1+p_2\leq m}} \binom {m}{p_1+p_2} \binom{p_1+p_2}{p_1} A_{m,p_1,p_2}(n).
$$
To establish Proposition~\ref{prop:a_m_n} it suffices to prove the following two claims for all $\ell\in\N$:
\begin{enumerate}
\item[(a)] $A_{\ell,0,0}(n)$ is a polynomial in $n$ of degree at most $[\ell/2]$ in the range $n\geq \ell-2$.
\item[(b)] For $(p_1,p_2)\neq (0,0)$, $A_{\ell,p_1,p_2}(n)$ is a polynomial in $n$ of degree at most $[\ell/2]-1$ in the range $n\geq \ell-3$.
\end{enumerate}
First of all, observe that these claims are true for $\ell=1$ and $\ell=2$ because
$$
A_{1,0,0}(n) =0,
\quad
A_{2,0,0}(n) = 2n-2,
\quad
A_{2,1,1}(n) = 2,
\quad
A_{2,0,1}(n)=A_{2,1,0}(n) = 0.
$$
For larger values of $\ell$, we shall prove these claims by induction. The inductive argument is based on certain recurrence relations for the functions $A_{m,p_1,p_2}(n)$ that we now derive. 

\vspace*{2mm}
\noindent
\textit{Case 1.}
Let first $p_1=p_2 = p\in \N_0$. Then, in~\eqref{eq:sum_powers_two} we can cancel the $+q$-terms with the $-q$-terms, which yields a representation of $0$ as a sum of $\pm q^2, \pm q^3,\ldots, \pm q^n$, the total number of terms being $m-p_1-p_2$. Dividing all terms by $q$, we obtain a representation of $0$ as a sum of $m-p_1-p_2$ terms of the form $\pm q, \pm q^2,\ldots, \pm q^{n-1}$. The number of such representations is $A_{m-p_1-p_2}(n-1)$. Hence, we arrive at
\begin{equation}\label{eq:rec_A_case1}
A_{m, p, p}(n) = A_{m-2p}(n-1) = \sum_{\substack{r_1,r_2\geq 0\\ r_1+r_2\leq m-2p}} \binom{m-2p}{r_1+r_2}\binom{r_1+r_2}{r_1} A_{m-2p,r_1,r_2}(n-1).
\end{equation}

\vspace*{2mm}
\noindent
\textit{Case 2.}
Let now $p_1>p_2$. Then, in the representation~\eqref{eq:sum_powers_two} we can cancel $p_2$ terms of the  form $+q^1$ with the same number $p_2$ of terms of the form $-q^1$. The resulting representation of $0$ contains $p_1-p_2>0$ terms of the form $+q^1$ and $m-p_1- p_2$ terms of the form $\pm q^2,\pm q^3,\ldots, \pm q^{n}$. If $p_1-p_2$ is not divisible by $q$, then $A_{m,p_1,p_2}(n) = 0$ because the sum on the left-hand side of~\eqref{eq:sum_powers_two} is not divisible by $q^2$. So, assume that $p_1-p_2 = s q$ for some $s\in\N$. Divide the remaining $p_1-p_2$ terms of the form $+q^1$ into $s$ groups of the form $+q^1+\ldots+ q^1$, each consisting of $q$ terms, and replace each group by $+q^2$. We obtain $s$ terms of the form $q^2$. However, we have also to take care of the terms of the form $\pm q^2$ that can  appear among the $m-p_1-p_2$ terms of the form $\pm q^2, \pm q^3,\ldots, \pm q^n$. Let $r_1$, respectively, $r_2$,  be the number of the terms $+q^2$, respectively, $-q^2$, among these $m-p_1-p_2$ terms. Dividing all terms by $q$, we obtain a representation of $0$ starting with $s = (p_1-p_2)/q$ terms of the form $+q^1$, followed by a sum of $m-p_1-p_2$ terms of the form $\pm q^1,\pm q^2,\ldots, \pm q^{n-1}$, among which $r_1$ terms are of the form $+q^1$ and $r_2$ terms are of the form $-q^1$. Since the positions of these terms can be arbitrary among the $m-p_1-p_2$ terms, we arrive at the identity
\begin{equation}\label{eq:rec_A_case2}
A_{m,p_1,p_2} (n) = \sum_{\substack{r_1,r_2\geq 0\\ r_1+r_2 \leq m-p_1 -p_2}} \binom{m-p_1 -p_2}{r_1+r_2}\binom {r_1+r_2}{r_1} A_{s + m-p_1-p_2, s+r_1, r_2}(n-1),
\end{equation}
which holds if $p_1-p_2 = sq$ for $s\in\N$.

\vspace*{2mm}
\noindent
\textit{Case 3.}
Similar arguments show that in the case when $p_1<p_2$ we have $A_{m,p_1,p_2}(n) = 0$ if $p_2-p_1$ is not divisible by $q$ and
\begin{equation}\label{eq:rec_A_case3}
A_{m,p_1,p_2} (n) = \sum_{\substack{r_1,r_2\geq 0\\ r_1+r_2 \leq m-p_1 -p_2}} \binom{m-p_1 -p_2}{r_1+r_2}\binom {r_1+r_2}{r_1} A_{s + m-p_1 -p_2, r_1, s + r_2}(n-1),
\end{equation}
if $p_2-p_1 = sq$ for some $s\in\N$.

\vspace*{2mm}
We are now in position to prove claims (a) and (b) by induction. As already mentioned above, the claims are true for $\ell=1,2$. Assume that the claims are true for $\ell= 1,\ldots,m-1$ with some $m\in \{3,4,\ldots\}$. We prove them for $\ell=m$.

\vspace*{2mm}
\noindent
\textit{Case A.} Consider first the case when $(p_1,p_2)\neq (0,0)$. Then, \eqref{eq:rec_A_case1}, \eqref{eq:rec_A_case2}, \eqref{eq:rec_A_case3} yield a representation of $A_{m,p_1,p_2}(n)$ as a linear combination of the terms $A_{\ell, r_1,r_2}(n-1)$ with $\ell<m$. Applying the induction assumption, we obtain that $A_{m,p_1,p_2}(n)$ is a polynomial in $n$ of degree at most $[m/2]-1$ in the range $n\geq m-3$.
In the individual cases, this can be seen as follows:
\begin{itemize}
\item Case 1: If $p_1=p_2=p\neq 0$, then from~\eqref{eq:rec_A_case1} we have $\ell = m - 2p < m$.  By the induction assumptions (a) and (b), the terms $A_{m-2p, r_1,r_2}(n-1)$ appearing in~\eqref{eq:rec_A_case1} are polynomials in $(n-1)$ of degree at most $[(m-2p)/2]\leq [m/2]-1$ in the range $n-1\geq m-2p-2$, which lies in  the range $n \geq m-3$.
\item Case 2: If $p_1>p_2$ and $p_1-p_2 = sq$ for $s\in\N$, then $\ell = s + m - p_1 - p_2$, which is
  strictly less than $m$ since $q \geq 2$. By the induction assumption (b), the terms $A_{s + m-p_1-p_2, s+r_1, r_2}(n-1)$  (for which we have $s+r_1>0$ since $s\in\N$) appearing in~\eqref{eq:rec_A_case2} are polynomials of $(n-1)$ of degree at most $[(s+m-p_1 - p_2)/2]-1\leq [m/2]-1$ in the range $n-1\geq s+m-p_1-p_2-3$. This lies
  in  the range $n\geq m-3$ since
  $p_1+p_2 - s = p + p_2 - (p_1-p_2)/q>0$ and hence, being integral, is greater than or equal to $1$. 
\item Case 3: If $p_2>p_1$ and $p_2-p_1 = sq$ for $s\in\N$, then $\ell = s + m - p_1 - p_2 < m$. The remaining considerations are similar to Case 2.
\end{itemize}
In all three cases we obtain that (b) holds for $\ell = m$.

\vspace*{2mm}
\noindent
\textit{Case B.}
Consider now the case when $p_1= p_2 = 0$. Then,~\eqref{eq:rec_A_case1} yields
$$
A_{m, 0, 0}(n) = A_{m}(n-1) = \sum_{\substack{r_1,r_2\geq 0\\ r_1+r_2\leq m}} \binom{m-2p}{r_1+r_2}\binom{r_1+r_2}{r_1} A_{m,r_1,r_2}(n-1).
$$
Separating the term with $(r_1,r_2) = (0,0)$, we obtain
$$
A_{m, 0, 0}(n) = A_{m,0,0}(n-1) + \sum_{\substack{r_1,r_2\geq 0\\ r_1+r_2\leq m\\ (r_1,r_2) \neq (0,0)}} \binom{m-2p}{r_1+r_2}\binom{r_1+r_2}{r_1} A_{m,r_1,r_2}(n-1).
$$
To each term in the sum on the right-hand side we can apply the same considerations as in Case A, due to
the restriction $(r_1,r_2)\neq (0,0)$. Thus, the sum on the right-hand side is a polynomial in $n$ of degree at most  $[m/2]-1$ in the range $n-1\geq m-3$. Denoting this polynomial by $P_m(n)$, we have
$$
A_{m, 0, 0}(n) = A_{m,0,0}(n-1) + P_m(n)
$$
for all $n\geq m-2$. Iterating this, we obtain
$$
A_{m,0,0}(n) = P_m(n) + P_m(n-1) + \ldots + P_m(m-2) + A_{m,0,0}(m-3),
$$
for all $n\geq m-2$. The right-hand side is a polynomial in $n$ of degree at most $[m/2]$. This proves that (a) holds with $\ell = m$, thus completing the induction.
\end{proof}
Proposition~\ref{prop:a_m_n} allows us to find explicit formulae for $A_m(n)$ for every fixed $m$ and all $n\geq m-2$. This also yields the moments of the lacunary sums $S_n$ because, as shown in Lemma \ref{lem:A_m_n_moment},  these are given by
$$
\E[S_n^m] = \frac{A_m(n)}{2^m}, \qquad m,n\in\N.
$$
To compute $A_m(n)$, we can proceed as follows. Let some $m\in \N$ be given. Using computer algebra, calculate the values $A_{m}(n)$ for  $n= m-2,\ldots, m-2+[m/2]$. For example, one may just expand the Laurent polynomial
$$
\left(\sum_{k=1}^n \left(x^{+q^k} + x^{-q^k}\right)  \right)^m
$$
and observe that $A_m(n)$ is the coefficient of $x^0$ there.
Then, compute the unique interpolating polynomial of degree $[m/2]$ taking the same values as $A_m(n)$ for $n= m-2,\ldots, m-2+[m/2]$. By Proposition~\ref{prop:a_m_n}, this yields a formula for $A_m(n)$ for all $n\geq m-2$.
For example, for $q=2$ we obtained the following formula
\begin{align*}
&A_1(n) = 0 &\text{ for all } n\in \N,\\
&A_2(n) = 2n &\text{ for all } n\in \N,\\
&A_3(n) = 6n-6 &\text{ for all } n\in \N,\\
&A_4(n) = 12 n^2 + 18 n - 48 &\text{ for all } n\geq 2,\\
&A_5(n) = 120 n^2 -130 n - 240  &\text{ for all } n\geq 3,\\
&A_6(n) = 120 n^3 + 900  n^2 - 3310 n + 870   &\text{ for all } n\geq 4,\\
&A_7(n) = 2520 n^3 + 840  n^2 - 40446 n + 48552   &\text{ for all } n\geq 5,
\end{align*}
and so on.  By computing more values of $A_m(n)$ than necessary, it is also possible to check the correctness of these formulas.  
Since the $m$-th cumulant $\kappa_m(S_n)$ of $S_n$ can be expressed as a polynomial of the first $m$ moments $\E [S_n], \ldots, \E [S_n^m]$, we obtain that $\kappa_m(S_n)$ is a polynomial in $n$ of degree at most $[m/2]$ for all $n\geq m-2$. In fact, it is even a polynomial of degree $1$. To see this, recall that the convergence of analytic functions in~\eqref{eq:cum_gen_funct_converge} is uniform on some disk around $0$. Differentiating~\eqref{eq:cum_gen_funct_converge} $m\in\N$ times, we get
$$
\lim_{n\to\infty}\frac 1n \kappa_m(S_n) = \Lambda_q^{(m)}(0),
$$
which implies that $\kappa_m(S_n)$ must be of degree $1$. 
For example, in the case when $q=2$, we obtained
\begin{align*}
&\frac{\kappa_1(S_n)}{1!} = 0 &\text{ for all } n\in \N,\\
&\frac{\kappa_2(S_n)}{2!} = \frac n4 &\text{ for all } n\in \N,\\
&\frac{\kappa_3(S_n)}{3!} = \frac{n-1}{8} &\text{ for all } n\in \N,\\
&\frac{\kappa_4(S_n)}{4!} = \frac{3n-8}{64} &\text{ for all } n\geq 2,\\
&\frac{\kappa_5(S_n)}{5!} = \frac{-n-24}{384} &\text{ for all } n\geq 3,\\
&\frac{\kappa_6(S_n)}{6!} = \frac{-115 n - 51}{4608}  &\text{ for all } n\geq 4,\\
&\frac{\kappa_7(S_n)}{7!} = \frac{916 - 393 n}{15360}  &\text{ for all } n\geq 5,
\end{align*}
and so on. This yields the first few terms in the Taylor expansion of $\Lambda_2$. Since $I_2'$ is the inverse function of $\Lambda_2'$, this easily yields the Taylor expansion of $I_2$ stated in Proposition~\ref{prop:I_2_I_3}.

\bibliography{Lacunary_LDP}

\begin{thebibliography}{10}

\bibitem{abramowitz_stegun}
M.~Abramowitz and I.~A. Stegun.
\newblock {\em Handbook of mathematical functions with formulas, graphs, and
  mathematical tables}, volume~55 of {\em National Bureau of Standards Applied
  Mathematics Series}.
\newblock 1964.

\bibitem{cantor2}
D.~Airey and B.~Mance.
\newblock Normality of different orders for {C}antor series expansions.
\newblock {\em Nonlinearity}, 30(10):3719--3742, 2017.

\bibitem{cantor3}
D.~Airey, B.~Mance, and J.~Vandehey.
\newblock Normal number constructions for {C}antor series with slowly growing
  bases.
\newblock {\em Czechoslovak Math. J.}, 66(141)(2):465--480, 2016.

\bibitem{AB2010}
C.~Aistleitner and I.~Berkes.
\newblock On the central limit theorem for {$f(n_kx)$}.
\newblock {\em Probab. Theory Related Fields}, 146(1-2):267--289, 2010.

\bibitem{agi}
S.~Albeverio, I.~Garko, M.~Ibragim, and G.~Torbin.
\newblock Non-normal numbers: full {H}ausdorff dimensionality vs zero
  dimensionality.
\newblock {\em Bull. Sci. Math.}, 141(2):1--19, 2017.

\bibitem{baladi_book}
V.~Baladi.
\newblock {\em Positive transfer operators and decay of correlations},
  volume~16 of {\em Advanced Series in Nonlinear Dynamics}.
\newblock World Scientific Publishing Co., Inc., River Edge, NJ, 2000.

\bibitem{Borel1909}
{\'E}.~Borel.
\newblock Les probabilit{\'e}s d{\'e}nombrables et leurs applications
  arithm{\'e}tiques.
\newblock {\em Rendiconti del Circolo Matematico di Palermo (1884-1940)},
  27(1):247--271, Dec 1909.

\bibitem{broise}
A.~Broise.
\newblock Transformations dilatantes de l'intervalle et th\'{e}or\`emes
  limites. \'{E}tudes spectrales d'op\'{e}rateurs de transfert et applications.
\newblock Number 238, pages 1--109. 1996.

\bibitem{bryc}
W.~Bryc.
\newblock On the large deviation principle for stationary weakly dependent
  random fields.
\newblock {\em Ann. Probab.}, 20(2):1004--1030, 1992.

\bibitem{Can1869}
G.~Cantor.
\newblock Ueber die einfachen zahlensysteme.
\newblock {\em Z. Math. Phys.}, 14:121–128, 1869.

\bibitem{cc}
J.-R. Chazottes and P.~Collet.
\newblock Almost-sure central limit theorems and the {E}rd{\H{o}}s-{R}\'{e}nyi
  law for expanding maps of the interval.
\newblock {\em Ergodic Theory Dynam. Systems}, 25(2):419--441, 2005.

\bibitem{ck}
C.~M. Colebrook and J.~H.~B. Kemperman.
\newblock On non-normal numbers.
\newblock {\em Nederl. Akad. Wetensch. Proc. Ser. A 71=Indag. Math.}, 30:1--11,
  1968.

\bibitem{collet}
P.~Collet.
\newblock Some ergodic properties of maps of the interval.
\newblock In {\em Dynamical systems ({T}emuco, 1991/1992)}, volume~52 of {\em
  Travaux en Cours}, pages 55--91. Hermann, Paris, 1996.

\bibitem{conz}
J.-P. Conze and S.~Le~Borgne.
\newblock Limit law for some modified ergodic sums.
\newblock {\em Stoch. Dyn.}, 11(1):107--133, 2011.

\bibitem{C1938}
H.~{Cram{\'e}r}.
\newblock {Sur un nouveau th\'eor\`eme-limite de la th\'eorie des
  probabilit\'es'}.
\newblock {\em Actualit\'es Scientifiques et Industrielles \textbf{736}}, pages
  {5--23}, {1938}.

\bibitem{CT2018}
H.~{Cram{\'e}r} and H.~{Touchette}.
\newblock {On a new limit theorem in probability theory (Translation of `Sur un
  nouveau th\'eor\`eme-limite de la th\'eorie des probabilit\'es')}.
\newblock {\em ArXiv e-prints}, Feb. 2018.

\bibitem{dembo_zeit}
A.~Dembo and O.~Zeitouni.
\newblock {\em Large deviations techniques and applications}, volume~38 of {\em
  Stochastic Modelling and Applied Probability}.
\newblock Springer-Verlag, Berlin, 2010.
\newblock Corrected reprint of the second (1998) edition.

\bibitem{denker_kesseboehmer}
M.~Denker and M.~Kesseb\"{o}hmer.
\newblock Thermodynamic formalism, large deviation, and multifractals.
\newblock In {\em Stochastic climate models ({C}horin, 1999)}, volume~49 of
  {\em Progr. Probab.}, pages 159--169. Birkh\"{a}user, Basel, 2001.

\bibitem{denker_nicol}
M.~Denker and M.~Nicol.
\newblock Erd\"{o}s-{R}\'{e}nyi laws for dynamical systems.
\newblock {\em J. Lond. Math. Soc. (2)}, 87(2):497--508, 2013.

\bibitem{dts}
M.~Drmota and R.~F. Tichy.
\newblock {\em Sequences, discrepancies and applications}, volume 1651 of {\em
  Lecture Notes in Mathematics}.
\newblock Springer-Verlag, Berlin, 1997.

\bibitem{EFHN2015}
T.~Eisner, B.~Farkas, M.~Haase, and R.~Nagel.
\newblock {\em Operator theoretic aspects of ergodic theory}, volume 272 of
  {\em Graduate Texts in Mathematics}.
\newblock Springer, Cham, 2015.

\bibitem{erdos-renyi}
P.~Erd\H{o}s and A.~R\'{e}nyi.
\newblock On {C}antor's series with convergent {$\sum 1/q_{n}$}.
\newblock {\em Ann. Univ. Sci. Budapest. E\"{o}tv\"{o}s Sect. Math.},
  2:93--109, 1959.

\bibitem{EG1955}
P.~Erd\"{o}s and I.~G{\'a}l.
\newblock On the law of the iterated logarithm.
\newblock {\em Proc. Kon. Nederl. Akad. Wetensch}, 58:65--84, 1955.

\bibitem{meliot_etal_book}
V.~F\'{e}ray, P.-L. M\'{e}liot, and A.~Nikeghbali.
\newblock {\em Mod-{$\phi$} convergence}.
\newblock SpringerBriefs in Probability and Mathematical Statistics. Springer,
  Cham, 2016.
\newblock Normality zones and precise deviations.

\bibitem{cantor1}
F.~Filip and J.~\v{S}ustek.
\newblock Normal numbers and {C}antor expansions.
\newblock {\em Unif. Distrib. Theory}, 9(2):93--101, 2014.

\bibitem{fuku}
K.~Fukuyama.
\newblock The law of the iterated logarithm for discrepancies of
  {$\{\theta^nx\}$}.
\newblock {\em Acta Math. Hungar.}, 118(1-2):155--170, 2008.

\bibitem{fuku_m}
K.~Fukuyama and S.~Miyamoto.
\newblock Metric discrepancy results for {E}rd{\H{o}}s-{F}ortet sequence.
\newblock {\em Studia Sci. Math. Hungar.}, 49(1):52--78, 2012.

\bibitem{G1970}
V.~F. Gapo\v{s}kin.
\newblock The central limit theorem for certain weakly dependent sequences.
\newblock {\em Teor. Verojatnost. i Primenen.}, 15:666--684, 1970.

\bibitem{grigull}
J.~Grigull.
\newblock {G}ro\ss e {A}bweichungen und {F}luktuationen f{\"u}r
  {G}leichgewichtsma\ss e rationaler {A}bbildungen.
\newblock Dissertation. Georg-August-Universit{\"a}t G{\"o}ttingen, 1993.

\bibitem{K1946}
M.~Kac.
\newblock On the distribution of values of sums of the type {$\sum f(2^k t)$}.
\newblock {\em Ann. of Math. (2)}, 47:33--49, 1946.

\bibitem{K1949}
M.~Kac.
\newblock Probability methods in some problems of analysis and number theory.
\newblock {\em Bull. Amer. Math. Soc.}, 55:641--665, 1949.

\bibitem{Kato_book}
T.~Kato.
\newblock {\em Perturbation theory for linear operators}.
\newblock Classics in Mathematics. Springer-Verlag, Berlin, 1995.
\newblock Reprint of the 1980 edition.

\bibitem{kesseboehmer}
M.~Kesseb{\"o}hmer.
\newblock {M}ultifraktale und {A}symptotiken grosser {D}eviationen.
\newblock Dissertation. Georg-August-Universit{\"a}t G{\"o}ttingen, 1999.

\bibitem{KK1925}
A.~{Khintchine} and A.~{Kolmogorov}.
\newblock {\"Uber Konvergenz von Reihen, deren Glieder durch den Zufall
  bestimmt werden.}
\newblock {\em {Rec. Math. Moscou}}, 32:668--677, 1925.

\bibitem{kifer}
Y.~Kifer.
\newblock Large deviations in dynamical systems and stochastic processes.
\newblock {\em Trans. Amer. Math. Soc.}, 321(2):505--524, 1990.

\bibitem{K1924}
A.~Kolmogoroff.
\newblock Une contribution \`a l'\'etude de la convergence des s\`eries de
  {F}ourier.
\newblock {\em Fund. Math.}, 5(1):96--97, 1924.

\bibitem{kn}
L.~Kuipers and H.~Niederreiter.
\newblock {\em Uniform distribution of sequences}.
\newblock Wiley-Interscience [John Wiley \& Sons], New York-London-Sydney,
  1974.

\bibitem{PL2020}
G.~Leobacher and J.~Prochno.
\newblock Statistical independence in mathematics -- the key to a gaussian law.
\newblock {\em Math. Semesterber.}, Online first:1--36, 2020.

\bibitem{lopes}
A.~O. Lopes.
\newblock Entropy and large deviation.
\newblock {\em Nonlinearity}, 3(2):527--546, 1990.

\bibitem{mart}
G.~Martin.
\newblock Absolutely abnormal numbers.
\newblock {\em Amer. Math. Monthly}, 108(8):746--754, 2001.

\bibitem{ols}
L.~Olsen.
\newblock Extremely non-normal numbers.
\newblock {\em Math. Proc. Cambridge Philos. Soc.}, 137(1):43--53, 2004.

\bibitem{orey_pelikan}
S.~Orey and S.~Pelikan.
\newblock Large deviation principles for stationary processes.
\newblock {\em Ann. Probab.}, 16(4):1481--1495, 1988.

\bibitem{ErdosRenyi59}
E.~P and A.~R\'{e}nyi.
\newblock Some further statistical properties of the digits in cantor's series.
\newblock {\em Acta Mathematica Hungarica}, 10:21--29, 1959.

\bibitem{phil}
W.~Philipp.
\newblock Limit theorems for lacunary series and uniform distribution {${\rm
  mod}\ 1$}.
\newblock {\em Acta Arith.}, 26(3):241--251, 1974/75.

\bibitem{poll}
A.~D. Pollington.
\newblock The {H}ausdorff dimension of a set of normal numbers.
\newblock {\em Pacific J. Math.}, 95(1):193--204, 1981.

\bibitem{Rademacher1922}
H.~Rademacher.
\newblock Einige {S}\"atze \"uber {R}eihen von allgemeinen
  {O}rthogonalfunktionen.
\newblock {\em Math. Ann.}, 87:112--138, 1922.

\bibitem{renyi}
A.~R\'{e}nyi.
\newblock On the distribution of the digits in {C}antor's series.
\newblock {\em Mat. Lapok}, 7:77--100, 1956.

\bibitem{R1970}
R.~T. Rockafellar.
\newblock {\em Convex analysis}.
\newblock Princeton Mathematical Series, No. 28. Princeton University Press,
  Princeton, N.J., 1970.

\bibitem{rychlik}
M.~Rychlik.
\newblock Bounded variation and invariant measures.
\newblock {\em Studia Math.}, 76(1):69--80, 1983.

\bibitem{SZ1947}
R.~Salem and A.~Zygmund.
\newblock On lacunary trigonometric series.
\newblock {\em Proc. Nat. Acad. Sci. U. S. A.}, 33:333--338, 1947.

\bibitem{SZ1950}
R.~Salem and A.~Zygmund.
\newblock La loi du logarithme it{\'e}r{\'e} pour les s{\'e}ries
  trigonom{\'e}triques lacunaires.
\newblock {\em Bull. Sci. Math.(2)}, 74:209--224, 1950.

\bibitem{schnell}
D.~Schnellmann.
\newblock Law of iterated logarithm and invariance principle for one-parameter
  families of interval maps.
\newblock {\em Probab. Theory Related Fields}, 162(1-2):365--409, 2015.

\bibitem{Turan56}
P.~Tur\'{a}n.
\newblock On the distribution of ``digits'' in {C}antor-systems.
\newblock {\em Mat. Lapok}, 7:71--76, 1956.

\bibitem{V2008}
S.~R.~S. Varadhan.
\newblock Large deviations.
\newblock {\em Ann. Probab.}, 36(2):397--419, 2008.

\bibitem{MW1959}
M.~Weiss.
\newblock The law of the iterated logarithm for lacunary trigonometric series.
\newblock {\em Trans. Amer. Math. Soc.}, 91:444--469, 1959.

\bibitem{young}
L.-S. Young.
\newblock Large deviations in dynamical systems.
\newblock {\em Trans. Amer. Math. Soc.}, 318(2):525--543, 1990.

\bibitem{zinsmeister}
M.~Zinsmeister.
\newblock {\em Thermodynamic formalism and holomorphic dynamical systems},
  volume~2 of {\em SMF/AMS Texts and Monographs}.
\newblock American Mathematical Society, Providence, RI; Soci\'{e}t\'{e}
  Math\'{e}matique de France, Paris, 2000.
\newblock Translated from the 1996 French original by C. Greg Anderson.

\bibitem{Z1930}
A.~Zygmund.
\newblock On the convergence of lacunary trigonometric series.
\newblock {\em Fund. Math.}, 16(1):90--107, 1930.

\end{thebibliography}
\bibliographystyle{abbrv}

\end{document}